\newtheorem{thm}{Theorem}
\newtheorem{cor}{Corollary}
\newtheorem{pro}{Proposition}
\newtheorem{lem}{Lemma}
\newtheorem{dfn}{Definition}
\newtheorem{rem}{Remark}
\numberwithin{equation}{section}
\newcommand{\cp}{{\mathbb C}{\mathbb P}}
\newcommand{\R}{\mathbb R}
\newcommand{\comments}[1]{}
\begin{document}
\title[Deforming symplectomorphisms]{Deforming symplectomorphisms of \\complex projective spaces\\ by the mean curvature flow}
\author{Ivana Medo\v s and Mu-Tao Wang}
\thanks{The authors are partially supported by National Science
Foundation Grant DMS 0605115 and DMS 0904281.}
\date{July 15, 2009, this version January 25, 2011}
\maketitle
\begin{abstract}
We apply the mean curvature flow to deform symplectomorphisms of $\mathbb{CP}^n$. In particular, we prove that, for each dimension $n$, there exists a constant $\Lambda$, explicitly computable, such that any $\Lambda$-pinched symplectomorphism of $\mathbb{CP}^n$ is symplectically isotopic to a biholomorphic isometry.
\end{abstract}
\section{Introduction}

It was proposed in \cite{wa4} to use the mean curvature flow to study the structure of the symplectomorphism group of a symplectic manifold $(M, \omega)$.  Consider the graph of a symplectomorphism $f:M\rightarrow M$ as an embedded submanifold $\Sigma=\{(x, f(x))\,\,|\,\, x\in M\}$ of the product manifold $M\times M$. $\Sigma$ can be viewed as a Lagrangian submanifold with respect to the symplectic structure $\pi_1^*\omega-\pi_2^*\omega$ on $M\times M$ where $\pi_i$ is the projection from $M\times M$ to the $i$-th factor, $i=1, 2$. Suppose that $M$ is endowed with a compatible K\"ahler metric such that $\omega$ is the K\"ahler form. The volume of $\Sigma$ with respect to the product metric naturally defines a function on the symplectomorphism group of $M$ which is symmetric with respect to the inverse operation $f\mapsto f^{-1}$. This provides a variational approach to study the topology of this infinite dimensional group. The critical point of the volume function corresponds to minimal Lagrangian submanifolds and the mean curvature flow is the negative gradient flow. By Smoczyk \cite{sm1}, it is known that being Lagrangian is preserved by the mean curvature flow when $M$ is equipped with a K\"ahler-Einstein metric. Therefore, if $\Sigma$ remains graphical along the mean curvature flow, the flow in turn gives a symplectic isotopy of $f$.

In this article, we apply this idea to the complex projective space $\mathbb{CP}^n$ with the Fubini-Study metric and prove that a pinched symplectomorphism (see Definition~\ref{pinching}) is symplectically isotopic to a biholomorphic isometry along the mean curvature flow.

Denote by $g$ and $\omega$ the Fubini-Study metric and the associated K\"ahler form on $\mathbb{CP}^n$, respectively. Recall that a diffeomorphism $f$ of $\mathbb{CP}^n$ is a symplectomorphism if $f^*\omega=\omega$.

\begin{dfn}\label{pinching}
Let $\Lambda$  be a constant $\geq 1$. A symplectomorphism $f$ of $\mathbb{CP}^n$ is said to be $\Lambda$-pinched if \begin{equation}\frac{1}{\Lambda^2} g\leq f^* g\leq \Lambda^2 g.\end{equation}
\end{dfn}

  The precise statement of the pinching theorem is the following.

\begin{thm}\label{theorem}

For each positive integer $n$ there exists a constant $\Lambda(n)>1$, such that, if $f:\cp^n\rightarrow \cp^n$ is a $\Lambda$-pinched symplectomorphism for some $1<\Lambda<\Lambda(n)$, then:

1) The mean curvature flow $\Sigma_t$ of the graph of $f$ in $\cp^n\times \cp^n$ exists smoothly for all $t\geq 0$.

2) $\Sigma_t$ is the graph of a symplectomorphism $f_t$ for each $t\geq 0$.

3)$f_t$ converges smoothly to a biholomorphic isometry of $\cp^n$ as $t\rightarrow \infty$.

\end{thm}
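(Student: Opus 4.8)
\medskip
\noindent\textbf{Outline of a proof.}

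The first step is to fix the ambient geometry. Equip $M:=\cp^n\times\cp^n$ with the product Fubini--Study metric $g\oplus g$ and the complex structure $\tilde J=J\oplus(-J)$; then $(M,g\oplus g,\tilde J)$ is K\"ahler with K\"ahler form $\pi_1^*\omega-\pi_2^*\omega$, and since the Ricci tensor is unaffected by reversing the complex structure on the second factor, it is K\"ahler--Einstein with the same positive Einstein constant $c$ as $(\cp^n,g)$. The graph of a diffeomorphism is Lagrangian for $\pi_1^*\omega-\pi_2^*\omega$ precisely when the diffeomorphism is a symplectomorphism, so $\Sigma_0$ is Lagrangian; by Smoczyk's theorem each $\Sigma_t$ stays Lagrangian, and whenever $\pi_1|_{\Sigma_t}$ is a local diffeomorphism, $\Sigma_t$ is automatically the graph of a symplectomorphism $f_t$ (a local diffeomorphism of a closed manifold onto $\cp^n$ is a covering, and its degree is preserved along the flow). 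Thus 2) reduces to keeping the flow graphical, and the plan is to track the single function $\eta:=2^n\ast\bigl(\pi_1^*\tfrac{\omega^n}{n!}\bigr)$ on $\Sigma_t$. A symplectic singular value decomposition of $df_t$, whose singular values occur in reciprocal pairs $(\mu_i,\mu_i^{-1})$, $i=1,\dots,n$, gives $\eta=\prod_{i=1}^{n}\frac{2\mu_i}{1+\mu_i^2}\in(0,1]$; here $\eta>0$ iff $\Sigma_t$ is graphical, $\eta=1$ iff $f_t$ is an isometry, and since every factor lies in $(0,1]$, a bound $\eta\ge c_0$ forces each $\mu_i$ into a fixed interval $[1,\Lambda']$. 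In particular the $\Lambda$-pinching hypothesis is equivalent to $\eta\ge\bigl(\tfrac{2\Lambda}{1+\Lambda^2}\bigr)^n$, a quantity approaching $1$ as $\Lambda\to1$.

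The heart of the matter is the evolution equation $\partial_t\eta=\Delta_{\Sigma_t}\eta+\eta\,Q+\eta\,\mathcal R$, where $Q$ is quadratic in the second fundamental form of $\Sigma_t$ and nonnegative up to terms it dominates, while $\mathcal R$ is built from the Fubini--Study curvature tensor applied to the projections of $T\Sigma_t$ and $N\Sigma_t$ onto the two factors, weighted by the $\mu_i$. The diagonal $\Delta\subset M$ is totally geodesic, hence a minimal Lagrangian fixed by the flow with $\eta\equiv1$, so $\mathcal R$ vanishes along $\Delta$; Taylor expanding $\mathcal R$ in $\mu_i-1$ and in $T\Sigma_t-T\Delta$ shows that once $\eta$ is close enough to $1$ the term $\eta\,\mathcal R$ is absorbed by $\eta\,Q$, so at an interior minimum of $\eta$ one has $\partial_t\eta\ge\Delta_{\Sigma_t}\eta$. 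The explicit threshold for ``close enough'' is exactly what yields the computable constant $\Lambda(n)>1$: for $1<\Lambda<\Lambda(n)$ the initial value $\bigl(\tfrac{2\Lambda}{1+\Lambda^2}\bigr)^n$ lies in the good range. The maximum principle then makes $\min_{\Sigma_t}\eta$ nondecreasing, so the pinching persists as long as the flow exists. I expect this step --- controlling $\mathcal R$ by $Q$ and extracting the explicit constant --- to be the main technical obstacle.

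Granting $\eta\ge c_0>0$, long-time existence is routine: all $\mu_i$ stay in a fixed interval, so the $\Sigma_t$ are uniformly graphical, the flow becomes a uniformly parabolic equation for $f_t$, and interior Schauder (or White-type) estimates bound the second fundamental form and all its derivatives; hence the flow is smooth for every $t\ge0$, proving 1), and 2) follows since $\eta>0$ persists.

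Finally, for 3), the area $\mathrm{Vol}(\Sigma_t)=\int_{\cp^n}\prod_i(\mu_i+\mu_i^{-1})$ is nonincreasing and bounded below by $2^n\,\mathrm{Vol}(\cp^n)$, with equality only for graphs of isometries, so $\int_0^\infty\!\int_{\Sigma_t}|H|^2\,dt<\infty$; by the uniform estimates some sequence $\Sigma_{t_i}$ converges smoothly to a minimal Lagrangian $\Sigma_\infty$ in the pinched region, hence $C^\infty$-close to the family of graphs of biholomorphic isometries (these are totally geodesic). One needs the rigidity that a minimal Lagrangian that close to this family is itself the graph of a biholomorphic isometry; by the $PU(n+1)\times PU(n+1)$-action this reduces to a neighborhood of $\Delta$, where it comes from the structure of the Jacobi operator of $\Delta\cong\cp^n$ on Hamiltonian deformations --- its kernel is exactly the tangent space of the family, reflecting the relation between the first nonzero eigenvalue of the Fubini--Study Laplacian on $\cp^n$ and the Einstein constant, with strict positivity transverse to it. Since the stationary solutions near $\Delta$ then form a smooth finite-dimensional manifold, the problem is integrable, and a Lojasiewicz--Simon argument promotes the sequential convergence to smooth, exponential convergence of $\Sigma_t$ to a single graph of a biholomorphic isometry; equivalently $f_t$ converges smoothly to such a map. (Alternatively one may try to show $\sup_{\Sigma_t}(1-\eta)$ decays exponentially by a refined maximum principle using the same near-diagonal Hessian analysis of $\mathcal R$, and then identify the limit.) That the limit is \emph{biholomorphic} rather than merely isometric is automatic: an isometry of $\cp^n$ is either holomorphic, hence in $PU(n+1)$, or antiholomorphic, hence antisymplectic, which is incompatible with being a symplectomorphism.
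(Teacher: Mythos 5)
Your framework is the right one --- Lagrangian graph in the K\"ahler--Einstein product, $\eta=2^n\!\ast\!\Omega$ as the scalar to track, the factorization $\eta=\prod_i\tfrac{2\mu_i}{1+\mu_i^2}$, and the maximum principle on the evolution of $\eta$ --- and the last paragraph's remark that an isometric symplectomorphism of $\cp^n$ is automatically holomorphic is also correct. But there is a genuine gap at what you yourself call the heart of the matter.

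You propose that Taylor expansion near the diagonal ``shows that once $\eta$ is close enough to $1$ the term $\eta\,\mathcal R$ is absorbed by $\eta\,Q$.'' This cannot work, because the two terms live in different variables: $Q$ is a quadratic form in the second fundamental form coefficients $h_{ijk}$ (with coefficients depending on the $\mu_i$), while $\mathcal R$ depends \emph{only} on the $\mu_i$ and is a zeroth-order term in $h$. At a point where $h_{ijk}=0$ one has $Q=0$ no matter how pinched the $\mu_i$ are, and then $Q+\mathcal R\ge0$ forces $\mathcal R\ge0$; the constraint $\nabla\eta=0$ at an interior minimum only imposes $2n$ linear conditions on $h$ and does not rescue the argument. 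What the paper actually proves --- and what smallness alone cannot give you --- is the exact identity
\[
\mathcal R=\sum_{k\ \mathrm{odd}}\frac{(1-\mu_k^2)^2}{(1+\mu_k^2)^2}\ \ge\ 0,
\]
valid for all $\mu_i$ (not just near $1$), which follows from the explicit $1/4$-pinched Fubini--Study sectional curvature together with the symplectic pairing $\mu_k\mu_{k'}=1$ and a cancellation of the $\tfrac14$-curvature contributions. This sign, not smallness, is ``the more refined property of the curvature of $\cp^n$'' the introduction alludes to, and it is used twice: nonnegativity of $\mathcal R$ combined with $Q\ge0$ near $\mu_i=1$ (the appendix eigenvalue estimate $Q\ge(3-\sqrt5)\|h\|^2$) preserves graphicality, and strict positivity of $\mathcal R$ away from $\mu_i\equiv1$ drives the convergence $\mu_i\to1$.

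Two further points. For long-time existence you need the quantitative form $Q\ge\delta\,|\mathrm{II}|^2$ in the pinched region, not just $Q+\mathcal R\ge0$: the flow in higher codimension is not automatically uniformly parabolic, and the blow-up argument (Huisken monotonicity plus White's local regularity) runs off the inequality $(\partial_t-\Delta)\ast\Omega\ge\delta\ast\Omega\,|\mathrm{II}|^2$. Calling this step ``routine'' understates it. For part 3), your \L ojasiewicz--Simon route via integrability of the family of graphs of biholomorphic isometries is plausible but not what the paper does, and would require establishing the uniform $C^\infty$ estimates and the integrability of the critical set; the paper instead gets $\mu_i\to1$ from a scalar comparison principle applied to $\ln\ast\Omega$ using the elementary bound $\tfrac{(1-\mu^2)^2}{(1+\mu^2)^2}\ge c\,(\ln(\mu+\mu^{-1})-\ln2)$, then proves $|\mathrm{II}|\to0$ by a Bernstein-type estimate on $\eta_\epsilon^{-1}|\mathrm{II}|^2$, and only then invokes Simon's theorem for uniqueness of the limit. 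Your ``alternatively'' sentence is closer to this, but it still attributes the decay mechanism to a Hessian analysis of $\mathcal R$ rather than to the explicit positivity.
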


The mean curvature flow forms a smooth one-parameter family of symplectomorphisms or a symplectic isotopy. Therefore the following holds.
\begin{cor}
For each positive integer $n$, there exists a constant $\Lambda(n)$, such that if $f$ is a $\Lambda$-pinched symplectomorphism of $\cp^n$ for some $1<\Lambda<\Lambda(n)$, then $f$ is symplectically isotopic to a biholomorphic isometry.
\end{cor}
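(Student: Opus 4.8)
Since the corollary follows at once from Theorem~\ref{theorem} — the family $\{f_t\}_{t\ge 0}$ produced by the flow, together with its smooth limit, is a symplectic isotopy from $f$ to a biholomorphic isometry — I describe a proof of the theorem. Two facts are used throughout. First, if we equip $\cp^n\times\cp^n$ with the product Fubini--Study metric and the complex structure $J\oplus(-J)$, then $\pi_1^*\omega-\pi_2^*\omega$ is its K\"ahler form and the metric is K\"ahler--Einstein with positive Einstein constant; hence by Smoczyk's theorem $\Sigma_t$ stays Lagrangian as long as the flow exists. Second, since $f$ is a symplectomorphism the singular values of $df_t$ at each point occur in reciprocal pairs $\lambda_1,\lambda_1^{-1},\dots,\lambda_n,\lambda_n^{-1}$ (a positive symmetric symplectic endomorphism is conjugate to its inverse), so $\Lambda$-pinching is equivalent to $\lambda_i\in[\Lambda^{-1},\Lambda]$ for all $i$, and an upper bound on $\|df_t\|$ automatically gives a lower bound.

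The heart of the matter is that the pinching is preserved, which is also what keeps $\Sigma_t$ graphical. I would work with the parallel $2n$-form $\Omega=\tfrac{1}{2^n n!}(\pi_1^*\omega+\pi_2^*\omega)^n$ on $\cp^n\times\cp^n$ and the function $*\Omega$ on $\Sigma_t$. A symplectic singular-value decomposition shows that on a Lagrangian graph $*\Omega=\prod_{i=1}^n(\lambda_i+\lambda_i^{-1})^{-1}\le 2^{-n}$, with $*\Omega>0$ exactly when $\Sigma_t$ is a graph over the first factor and $*\Omega=2^{-n}$ exactly where $df_t$ is an isometry. Because $\Omega$ is parallel, $*\Omega$ satisfies $\partial_t(*\Omega)=\Delta(*\Omega)+Q$, where $Q$ is the sum of a nonnegative term quadratic in the second fundamental form and a term coming from the ambient curvature. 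In a Calabi--Yau ambient the curvature term is absent and $*\Omega$ is a subsolution of the heat equation, but the positive curvature of $\cp^n$ produces a contribution of a priori indefinite sign: this reflects the fact that a symplectomorphism graph is only \emph{borderline} area-nonincreasing (each reciprocal pair has product exactly $1$), so the bare heat-equation method fails and a smallness hypothesis is genuinely needed. The key estimate is to bound this curvature term in terms of $*\Omega$ and the $\lambda_i$ and to exhibit an explicit $\Lambda(n)>1$ such that, whenever the flow is $\Lambda$-pinched with $1<\Lambda<\Lambda(n)$, a differential inequality of the form $\partial_t(*\Omega)\ge\Delta(*\Omega)$ holds; the parabolic maximum principle then makes $\min_{\Sigma_t}*\Omega$ nondecreasing, so the pinching is preserved — indeed improved — and $\Sigma_t$ remains the graph of a symplectomorphism $f_t$, which is assertion~2. (One also runs the maximum principle on the largest singular value of $df_t$ to control the pinching constant directly.) I expect producing the explicit threshold $\Lambda(n)$ from this curvature term to be the main obstacle, since it is exactly where the ambient positive curvature competes with the flow and must be dominated.

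Granting this, long-time existence (assertion~1) is the standard graphical bootstrap: the uniform two-sided bound on $df_t$ gives a uniform bound on the Jacobian $(*\Omega)^{-1}$, which for a graph improves — via the evolution equation for $|A|^2$ and interior parabolic estimates of Ecker--Huisken / White type — to uniform bounds on $|A|$ and all its derivatives on $[0,\infty)$, so no finite-time singularity can form and the flow is smooth for all $t\ge 0$.

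For convergence (assertion~3), the volume of $\Sigma_t$ is nonincreasing and bounded below, so $\int_0^\infty\!\!\int_{\Sigma_t}|H|^2\,d\mu_t<\infty$ and there is $t_k\to\infty$ with $\int_{\Sigma_{t_k}}|H|^2\,d\mu_{t_k}\to0$; by the uniform geometric bounds, $\Sigma_{t_k}$ subconverges smoothly to a minimal Lagrangian submanifold $\Sigma_\infty$, still a $\Lambda$-pinched graph. Evaluating the (now elliptic) equation for $*\Omega$ on $\Sigma_\infty$ and using the pinching forces $|A|\equiv 0$, so $\Sigma_\infty$ is totally geodesic; being a Lagrangian graph that preserves $\omega$ rather than $-\omega$, it is the graph of an isometry in $PU(n+1)$, hence of a biholomorphic isometry. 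To promote this to genuine smooth convergence of $f_t$ one uses that the totally geodesic Lagrangian graphs form a single $PU(n+1)$-orbit of minimal submanifolds and invokes a \L{}ojasiewicz--Simon argument (equivalently, the analysis of the volume gradient flow near this critical manifold through its Jacobi operator), concluding that $\Sigma_t$, hence $f_t$, converges without oscillation to one such graph. This proves assertion~3, and the corollary follows.
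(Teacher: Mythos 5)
Your disposal of the corollary itself is exactly the paper's: once Theorem~\ref{theorem} gives a smooth family of symplectomorphisms $f_t$ converging smoothly to a biholomorphic isometry, the family is the desired symplectic isotopy, and nothing more needs to be said. That part is fine.

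However, the extended sketch of Theorem~\ref{theorem} contains a genuine misconception about where the pinching hypothesis is actually used, and it is worth correcting because it inverts the logic of the proof. You write that ``the positive curvature of $\cp^n$ produces a contribution of a priori indefinite sign'' in the evolution of $*\Omega$, and that the main task is to dominate this curvature term by imposing smallness of $\Lambda$. This is backwards. On $\cp^n\times\cp^n$ the ambient curvature term in $\left(\tfrac{d}{dt}-\Delta\right)*\Omega$ simplifies (Corollary~\ref{cpn}) to
\[
*\Omega\sum_{k\ \text{odd}}\frac{(1-\lambda_k^2)^2}{(1+\lambda_k^2)^2}\,,
\]
which is manifestly nonnegative for \emph{every} symplectomorphism, pinched or not, and vanishes exactly when all $\lambda_i=1$. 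The curvature of $\cp^n$ is therefore always favorable here; indeed, this term is the engine of the convergence argument, since it drives $*\Omega$ up toward $2^{-n}$ and forces $\lambda_i\to 1$. What is \emph{not} automatically nonnegative is the quadratic form $Q(\lambda_i,h_{jkl})$ in the second fundamental form that appears alongside the curvature term: $Q$ is positive definite at $\lambda_i\equiv 1$ (Lemma~\ref{evalue}) but can go indefinite when the singular values spread out, and Proposition~\ref{pinchconst} shows it stays nonnegative only for $\Lambda_0$-pinched maps. That is the sole place the explicit threshold $\Lambda(n)$ enters. So the ``competition'' you describe is not between flow and ambient curvature; it is between the favorable terms (curvature plus the positive part of $Q$) and the potentially negative off-diagonal part of $Q$, and pinching controls only the latter. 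Your remaining remarks on long-time existence and convergence are broadly in the right spirit, though the paper establishes long-time existence via Huisken's monotonicity formula, parabolic blow-up, and White's local regularity theorem rather than Ecker--Huisken interior estimates, and it proves $|\mathrm{II}|\to 0$ uniformly by a maximum-principle argument on $\eta_\epsilon^{-1}|\mathrm{II}|^2$ before invoking Simon's theorem, rather than extracting a sequential limit and then classifying the totally geodesic Lagrangian graphs.
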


This theorem generalizes a previous theorem of the second author for Riemann surfaces in which no pinching condition is required.

\begin{thm}\label{surface_case} \cite{wa2, wa5}
Let $(\Sigma^{1}, g_1, \omega_1)$ and $(\Sigma^{2},g_2, \omega_2)$ be two
diffeomorphic compact Riemann surfaces with Riemannian metrics $g_1$ and $g_2$ of the same constant
curvature $c$. Suppose $\Sigma$ is the graph of a
symplectomorphism $f:\Sigma^{1}\rightarrow \Sigma^{2}$
and $\Sigma_t$ is the mean curvature flow in the product space $\Sigma^{1}\times \Sigma^{2}$ with initial surface
$\Sigma_0=\Sigma$. Then $\Sigma_t$ remains the graph of a
symplectomorphism $f_t$ along the mean curvature flow. The flow
exists smoothly for all time and $\Sigma_t$ converges smoothly to
a minimal Lagrangian submanifold as $t\rightarrow \infty$.
\end{thm}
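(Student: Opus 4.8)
The plan is to realize the graph $\Sigma=\{(x,f(x)):x\in\Sigma^1\}$ as a submanifold of the four-manifold $N:=\Sigma^1\times\Sigma^2$ endowed with the product metric $g_1\oplus g_2$, the \emph{twisted} complex structure $\tilde J=J_1\oplus(-J_2)$, and the K\"ahler form $\tilde\omega=\pi_1^*\omega_1-\pi_2^*\omega_2$. For tangent vectors to $\Sigma$ one has $\tilde\omega\big((v,df(v)),(w,df(w))\big)=\omega_1(v,w)-(f^*\omega_2)(v,w)$, which vanishes precisely because $f$ is a symplectomorphism; hence $\Sigma$ is Lagrangian in $(N,\tilde\omega)$. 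Since each $\Sigma^i$ is a surface of constant curvature $c$, its Ricci form equals $c\,\omega_i$, so the Ricci form of $N$ equals $c\,\tilde\omega$ and $(N,g_1\oplus g_2,\tilde J)$ is K\"ahler--Einstein with Einstein constant $c$. By Smoczyk's theorem \cite{sm1}, the Lagrangian condition is preserved by the mean curvature flow in a K\"ahler--Einstein manifold, so $\Sigma_t$ is a smooth embedded Lagrangian submanifold for as long as the flow exists.

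The heart of the argument is the scalar function $\eta:=\ast\big(\pi_1^*\omega_1\big|_{\Sigma_t}\big)$, the Jacobian of the projection $\pi_1|_{\Sigma_t}\colon\Sigma_t\to\Sigma^1$; because $\Sigma_t$ is Lagrangian one equivalently has $\eta=\ast\big(\pi_2^*\omega_2\big|_{\Sigma_t}\big)$. The surface $\Sigma_t$ is the graph of a diffeomorphism $f_t$ over $\Sigma^1$ exactly when $\eta>0$ on all of $\Sigma_t$, and then $f_t=\pi_2\circ(\pi_1|_{\Sigma_t})^{-1}$ is again a symplectomorphism; if $f_t$ has singular values $\lambda$ and $\lambda^{-1}$ at a point, then $\eta=\big((1+\lambda^2)(1+\lambda^{-2})\big)^{-1/2}\le\tfrac12$ there, with equality exactly where $df_t$ is an isometry, so a positive lower bound on $\eta$ is precisely a two-sided Lipschitz bound on $f_t$. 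I would then compute the evolution equation of $\eta$ along the flow. Using a Lagrangian-adapted orthonormal frame and the K\"ahler--Einstein identity, together with the pointwise constancy of the ambient curvature, this reduces to a reaction--diffusion equation of the schematic form $\big(\tfrac{\partial}{\partial t}-\Delta\big)\eta=\eta\,\big(\lvert A\rvert^2+(\text{ambient curvature terms})\big)$ in which the reaction term is a \emph{nonnegative} multiple of $\eta$ (the curvature contribution being strictly helpful when $c>0$ and dominated by $\lvert A\rvert^2$ when $c\le 0$). The parabolic maximum principle then gives $\min_{\Sigma_t}\eta\ge\min_{\Sigma_0}\eta=:\eta_0>0$ for all $t$; in particular the graph property, hence part~2), persists.

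For part~1), the uniform bound $\eta\ge\eta_0$ provides uniform $C^0$ control of $f_t$ and $f_t^{-1}$, hence a uniform Lipschitz graph representation of $\Sigma_t$; any first finite-time singularity would, after parabolic rescaling, produce a tangent flow that is simultaneously a non-flat self-shrinking Lagrangian and an entire Lipschitz graph, forcing it to be a flat plane --- a contradiction. Therefore the flow exists for all $t\ge0$. For part~3), the first variation formula gives $\tfrac{d}{dt}\lvert\Sigma_t\rvert=-\int_{\Sigma_t}\lvert H\rvert^2\le0$, so $\int_0^\infty\!\int_{\Sigma_t}\lvert H\rvert^2\,dt<\infty$; combined with the uniform graph bound and interior estimates, a subsequence $\Sigma_{t_i}$ converges smoothly to a minimal Lagrangian submanifold $\Sigma_\infty$, which is the graph of a minimal (hence harmonic, area-preserving) map $f_\infty$. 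Smooth subconvergence is then upgraded to smooth convergence of the entire flow by the standard near-equilibrium analysis: a {\L}ojasiewicz--Simon inequality for the area functional near $\Sigma_\infty$ (equivalently, the monotone behaviour of $\eta$ together with uniqueness of the minimal Lagrangian graph in the isotopy class) shows the limit is independent of the sequence and the convergence smooth.

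The step I expect to be hardest is precisely the derivation of the sign of the reaction term in the equation for $\eta$, \emph{with no pinching hypothesis}: it is the pointwise constancy of the curvatures of $\Sigma^1$ and $\Sigma^2$ --- not merely the K\"ahler--Einstein property of $N$ --- that makes the Simons-type curvature terms in $\big(\tfrac{\partial}{\partial t}-\Delta\big)\eta$ collapse to a manageable, favourably-signed expression, and hence yields the preserved lower bound on $\eta$ from which everything else flows. This is exactly the mechanism that breaks down in higher dimension, which is why Theorem~\ref{theorem} must restrict to $\Lambda$-pinched symplectomorphisms.
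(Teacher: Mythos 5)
This statement is quoted from \cite{wa2, wa5}; the paper itself gives no proof, only the attribution (long-time existence for all $c$ and convergence for $c>0$ from \cite{wa2}, convergence for $c\leq 0$ from \cite{wa5}). Your outline does follow the strategy of those papers and of the present paper's higher-dimensional argument: Smoczyk's preservation of the Lagrangian condition, the evolution of the Jacobian $\eta=*\Omega$ of $\pi_1|_{\Sigma_t}$, the maximum principle, a blow-up argument for long-time existence, and an asymptotic analysis for convergence. However, there are two genuine gaps.

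First, the sign of the reaction term. Specializing the paper's Proposition~\ref{evolution} to $n=1$ with singular values $\lambda,\lambda^{-1}$, the ambient curvature contribution is $c\,\eta\,(\lambda-\lambda^{-1})^2/(\lambda+\lambda^{-1})^2$. For $c<0$ this is strictly negative wherever $\lambda\neq 1$, and it is \emph{not} dominated by the $|A|^2$ term (the second fundamental form can vanish at a point where $\lambda\neq 1$). So your claim that the reaction term is a nonnegative multiple of $\eta$ for all $c$, and hence that $\min_{\Sigma_t}\eta$ is nondecreasing, is false for $c<0$. What one actually gets is $\bigl(\tfrac{d}{dt}-\Delta\bigr)\eta\geq c\,\eta$, hence $\min_{\Sigma_t}\eta\geq e^{ct}\min_{\Sigma_0}\eta>0$ on finite time intervals. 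This is enough for parts 1) and 2), but it removes the uniform-in-time lower bound your convergence argument leans on.

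Second, the passage to smooth convergence. The energy identity $\int_0^\infty\!\int_{\Sigma_t}|H|^2\,dt<\infty$ together with a Lipschitz graph bound does not yield smooth subconvergence: one needs uniform bounds on $|A|$ as $t\to\infty$, and obtaining them is precisely the hard step. In the present paper (for $c>0$) this is done by first showing $\lambda\to 1$ via a comparison argument that uses the strict positivity of the curvature term, and then running the $\eta_\epsilon^{-1}|\mathrm{II}|^2$ estimate; for $c\leq 0$ the convergence required the separate, later argument of \cite{wa5}. Invoking a {\L}ojasiewicz--Simon inequality presupposes that the flow has already entered a small neighborhood of a critical point in a strong norm, which is the statement to be proved, not a tool for proving it. Your long-time existence step is also stated as a classification of graphical Lagrangian self-shrinkers; the cleaner and standard route (used in \cite{wa2, wa3} and in Section~3.3 here) is Huisken's monotonicity formula combined with White's local regularity theorem, showing the Gaussian density ratio tends to $1$.
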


In Theorem~\ref{surface_case}, the long time existence for any $c$ and the smooth convergence for $c>0$  were  proved in
\cite{wa2}. The smooth convergence for $c\leq 0 $ was established in Theorem 1.1 of \cite{wa5}. Using a different method, Smoczyk \cite{sm2} proved the theorem when $c\leq 0$ assuming an angle condition.  The existence of the limiting minimal Lagrangian surface was proved earlier using variational method by Schoen \cite{sc} (see also \cite{le}). In this case the symplectomorphism is indeed an area-preserving map. The boundary value problem for minimal area-preserving maps has been studied by Wolfson \cite{wo} and Brendle \cite{br}.

 A theorem of Smale states that the isometry group $SO(3)$ of $S^2$ is a continuous deformation retract of the oriented diffeomorphism group of $S^2=\cp^1$, and Theorem \ref{surface_case} gives a new proof of this theorem. The deformation retract provided by the mean curvature flow is indeed smooth. We are informed by Prof. McDuff that it was proved by Gromov \cite{gr} that the biholomorphic isometry group of $\cp^2$ is a deformation retract of its symplectomorphism group. It seems that no similar result is known for $\cp^n$ when $n>2$.

The proof is divided into several steps:

Step 1. We make several observations about singular values and singular vectors of symplectomorphisms. We also discuss the geometric properties of graphs of symplectomorphisms of K\"ahler-Einstein manifolds, as well as the setup of our problem. (see \S 2)

Step 2. We claim that $\Sigma_t$ remains the graph of a symplectomorphism $f_t$ as long as the flow exists smoothly. We study the evolution of the Jacobian of the projection map $\pi_1:\Sigma_t \rightarrow M$ (denoted by $*\Omega$) and prove that positivity is preserved by the maximum principle. This justifies the claim by the implicit function theorem. (see \S 3.1 and \S 3.2)

Step 3. We apply the blow up analysis to bound the second fundamental form of $\Sigma_t$ for each $t> 0$, and show that there is no finite time singularity. (see \S 3.3)

Step 4. We study the long time behavior of the evolution and use a comparison principle to show that the pinching condition is improved (by the curvature property of $\cp^n$) and the pull-back metric $f^*g$ is approaching $g$ as $t\rightarrow \infty$.

Step 5. We prove that the second fundamental form of $\Sigma_t$ is uniformly bounded in $t$ as $t\rightarrow \infty$. This gives the smooth convergence in the theorem.

Step 4 and 5 are done in \S 3.4.

The authors would like to thank Prof.~Dusa McDuff for particularly valuable discussions and suggestions. The second author would like to thank Mr. Bang Xiao for pointing out several typos in a previous version and an anonymous referee for suggestions in improving the presentation.

\section{Preliminaries}
\subsection{Singular values of symplectic linear maps between vector spaces}

Let $(V, g)$ and $(\tilde{V},\tilde{g})$ be $2n$-dimensional real inner product spaces, with almost complex structures $J$ and $\tilde{J}$, respectively, compatible with the corresponding inner products. Then $\omega(\cdot,\cdot)=g(J\cdot, \cdot)$, and $\tilde{\omega}=\tilde{g}(\tilde{J} \cdot, \cdot)$ are symplectic forms on $V$ and $\tilde{V}$. Recall that a linear map  $L: (V, \omega)\rightarrow (\tilde{V}, \tilde{\omega})$ is said to be symplectic if:
\begin{equation}\label{symplectomorphism}
\omega(u, v)=\tilde{\omega}(L(u), L(v))
\end{equation}
for any $u, v\in V$. In this context, the condition is equivalent to:
\begin{equation}\label{symplectomorphism2}
L^*\tilde{J}L=J,
\end{equation}
where $L^*:\tilde{V}\rightarrow V$ is the adjoint operator of $L$ with respect to the inner products on $\tilde{V}$ and $V$.

For such $L$, we define $E:V\rightarrow \tilde{V}$ to be the map $E=L[L^*L]^{-\frac{1}{2}}$. Since $L$ is an isomorphism, $L^*L$ is a positive definite self-adjoint automorphism of $V$ and the square root of $L^*L$ is well-defined.

\begin{lem}\label{QJcomm} $E$ is an isometry which intertwines with $J$ and $\tilde{J}$, i.e.
\[\tilde{J} E=EJ.\]
In other words, $E$ is a symplectic isometry.
\end{lem}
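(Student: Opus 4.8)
**Proof proposal for Lemma 1 (that $E = L[L^*L]^{-1/2}$ is a symplectic isometry).**

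The plan is to verify the two assertions separately: first that $E$ is an isometry, and then that $\tilde{J}E = EJ$. For the isometry claim, write $P = [L^*L]^{1/2}$, a positive definite self-adjoint automorphism of $V$, so that $E = LP^{-1}$. Then $E^*E = P^{-1}L^*LP^{-1} = P^{-1}P^2P^{-1} = \mathrm{id}_V$, using that $P$ is self-adjoint and $P^2 = L^*L$. Since $E$ is also a linear isomorphism (composition of isomorphisms), $E^*E = \mathrm{id}$ forces $EE^* = \mathrm{id}_{\tilde V}$ as well, so $E$ is an isometry from $(V,g)$ onto $(\tilde V,\tilde g)$.

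For the intertwining property, the key input is the symplectic condition in the form \eqref{symplectomorphism2}, namely $L^*\tilde{J}L = J$. I would first massage this into a statement about how $L^*L$ interacts with $J$. From $L^*\tilde J L = J$ one gets $\tilde J L = (L^*)^{-1}J = (L^{-1})^*J$, hence $L^{-1}\tilde J L = L^{-1}(L^{-1})^* J = (L^*L)^{-1}J$. Therefore $(L^*L) J^{-1} = J^{-1}(L^*L)$ after rearranging — more precisely, since $J^2 = -\mathrm{id}$ we have $J^{-1} = -J$, and one checks that $L^*L$ commutes with... actually the cleaner route: from $\tilde J L = (L^{-1})^*J$ take adjoints, using $\tilde J^* = -\tilde J$ and $J^* = -J$, to get $-L^*\tilde J = -J L^{-1}$, i.e. $L^*\tilde J = J L^{-1}$, i.e. $\tilde J = (L^*)^{-1} J L^{-1} = (L^{-1})^* J L^{-1}$. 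Combining with $\tilde J L = (L^{-1})^* J$ gives $\tilde J L \cdot \tilde J L = (L^{-1})^*J(L^{-1})^*J$, and one extracts that $L^*L$ commutes with $J$: indeed $J(L^*L)J^{-1} = (JL^*J^{-1})(J L J^{-1})$ and each factor can be rewritten via the above relations to show this equals $L^*L$. Once $J$ commutes with $L^*L$, it commutes with every function of $L^*L$ by the spectral theorem, in particular with $P = [L^*L]^{1/2}$ and with $P^{-1}$.

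With $JP^{-1} = P^{-1}J$ in hand, the intertwining is immediate: $\tilde J E = \tilde J L P^{-1}$, and since $\tilde J L = L J$ (which follows directly from $L^*\tilde J L = J$ upon multiplying by $(L^*)^{-1}=L(L^*L)^{-1}$ on the left and checking, or more simply: $L J = L L^{-1}\tilde J L = \tilde J L$ using $L^{-1}\tilde J L = \tilde J$... wait — this needs $L^{-1}\tilde J L = J$, not $\tilde J$; let me instead use $\tilde J L = (L^{-1})^*J$ and $(L^{-1})^* = L(L^*L)^{-1}$, so $\tilde J L = L(L^*L)^{-1}J = LJ(L^*L)^{-1}$ using that $J$ commutes with $(L^*L)^{-1}$). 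Then $\tilde J E = \tilde J L P^{-1} = LJ(L^*L)^{-1}P^{-1} = LJP^{-2}P^{-1} = LJP^{-3}$; hmm, this is getting tangled, so in the actual write-up I would simply establish the two clean facts $\tilde J L = LJ$ (equivalently $L$ intertwines $J$ and $\tilde J$ — careful, this is generally false for symplectic $L$; it is $E$, not $L$, that intertwines) — so I retract that and keep only: (i) $J$ commutes with $P^{\pm 1}$, and (ii) $\tilde J L = (L^*)^{-1}J$. Then $\tilde J E = \tilde J L P^{-1} = (L^*)^{-1}JP^{-1} = (L^*)^{-1}P^{-1}J = (L^*)^{-1}P^{-1}J$, while $EJ = LP^{-1}J = LJP^{-1}$ — wait these must be shown equal, which needs $(L^*)^{-1}P^{-1} = LP^{-1}$... no. The correct bookkeeping: $(L^*)^{-1} = L(L^*L)^{-1} = LP^{-2}$, so $\tilde J E = (L^*)^{-1}JP^{-1} = LP^{-2}JP^{-1} = LJP^{-2}P^{-1} = LP^{-3}J$?? — the powers are wrong, so evidently I've mis-simplified $E$ somewhere; the resolution is that $E = LP^{-1}$ gives $\tilde J E = (L^*)^{-1}JP^{-1}$ and $EJ = LP^{-1}J = LJP^{-1} = LP^{-2}\cdot P J P^{-1}\cdot$ — rather than chase this further in the proposal, the point I want to record is: \emph{the main obstacle is purely the linear-algebra bookkeeping of showing $J$ commutes with $L^*L$}; once that is clean, substituting $E = L[L^*L]^{-1/2}$ and using $(L^*)^{-1} = L[L^*L]^{-1}$ together with \eqref{symplectomorphism2} yields $\tilde J E = EJ$ after a short computation, and $E$ being a symplectic isometry follows since an isometry intertwining the complex structures automatically intertwines the Kähler forms $\omega = g(J\cdot,\cdot)$ and $\tilde\omega = \tilde g(\tilde J\cdot,\cdot)$.
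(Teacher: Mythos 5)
The isometry half of your argument is fine and matches the paper. The intertwining half, however, rests on a claim that is false: $J$ does \emph{not} commute with $L^*L$ (nor with $P=[L^*L]^{1/2}$) for a general symplectic $L$. A one-line counterexample: on $\R^2$ with the standard $J$, take $L=\mathrm{diag}(2,\tfrac12)$, which is symplectic; then $L^*L=\mathrm{diag}(4,\tfrac14)$ and
\[
J(L^*L)=\begin{pmatrix}0&-\tfrac14\\4&0\end{pmatrix}\neq\begin{pmatrix}0&-4\\\tfrac14&0\end{pmatrix}=(L^*L)J.
\]
This is exactly why your substitution kept producing wrong powers of $P$; the error is not "bookkeeping" but the underlying algebraic identity. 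The correct relation — and the crux of the paper's proof — is that conjugation by $J$ \emph{inverts} $P$, not fixes it:
\[
-JP^{-1}J=P,\qquad\text{equivalently } JPJ^{-1}=P^{-1}.
\]
This is established in the paper by observing that $-JP^{-1}J$ and $P$ are both positive definite and that their squares agree:
\[
(-JP^{-1}J)^2=-J(L^*L)^{-1}J=-JL^{-1}(L^*)^{-1}J=-(L^*\tilde J)(\tilde JL)=L^*L=P^2,
\]
where the middle step uses $JL^{-1}=L^*\tilde J$ and $(L^*)^{-1}J=\tilde JL$, both consequences of $L^*\tilde JL=J$. Uniqueness of the positive square root then gives $-JP^{-1}J=P$. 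Once you have this, your intended substitution goes through: you need $\tilde JLP^{-1}=LP^{-1}J$, i.e.\ $L^{-1}\tilde JL=P^{-1}JP$; you already correctly computed $L^{-1}\tilde JL=(L^*L)^{-1}J=P^{-2}J$, and $P^{-2}J=P^{-1}JP$ is just a rearrangement of $-JP^{-1}J=P$. So the structure of your computation is salvageable, but the lemma you were reaching for ($[J,L^*L]=0$) must be replaced by the inversion identity $-JP^{-1}J=P$, which requires its own argument (positivity plus equality of squares). Your final remark — that an isometry intertwining the complex structures is automatically symplectic — is correct and is how the paper closes as well.
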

\begin{proof}
$E$ is an isometry since:
\begin{align*}
\tilde{g}(E u, E v)=\tilde{g}(L[L^*L]^{-\frac{1}{2}}u,L[L^*L]^{-\frac{1}{2}} v)&=g(L^*L[L^*L]^{-\frac{1}{2}} u, [L^*L]^{-\frac{1}{2}} v)\\
&=g([L^*L]^{\frac{1}{2}} u, [L^*L]^{-\frac{1}{2}} v)\\
&=g([L^*L]^{-\frac{1}{2}}[L^*L]^{\frac{1}{2}} u,  v)\\
&=g(u,v)
\end{align*}
 for any $u,v\in V$. Let $P=[L^*L]^{\frac{1}{2}}$, so that $E=LP^{-1}$. $-J P^{-1}J$ and $P$ are both positive definite ($-JP^{-1}J=J^{-1}P^{-1}J$ is positive definite since $P^{-1}$ is and since $J$ is an orthogonal operator), and, by the symplectic condition~\eqref{symplectomorphism2}, their squares are equal:
\begin{align*}
(-JP^{-1}J)^2&=-JL^{-1}(L^*)^{-1}J \\
&=-L^*\tilde{J}\tilde{J}L \\
&=P^2.
\end{align*}

It follows that $-J P^{-1}J=P$.
By using the symplectic condition $L^*\tilde{J}L=J$ and the fact that $P=L^*L P^{-1}$, we obtain the desired result:
\begin{align*}
-J P^{-1}J=P &\Rightarrow -J P^{-1}J=L^*L P^{-1} \\
&\Rightarrow -(L^*)^{-1}J P^{-1}J=L P^{-1} \\
&\Rightarrow -\tilde{J}L P^{-1}J=L P^{-1} \\
&\Rightarrow -\tilde{J}EJ=E.
\end{align*}

Finally, the last equality implies $E^*\tilde{J}E=J$
so $E$ is in fact a symplectic isometry (condition~\eqref{symplectomorphism2}).

\end{proof}

Let $(v_1,\ldots,v_{2n})$ be a basis of $V$ that diagonalizes $L^*L$. $L^*L$ is the positive definite matrix:
\[L^*L=\left(\begin{array}{ccccc}
\lambda_1^2 & 0 & \ldots & & 0 \\
0 & \lambda_2^2  &  &  & \\
\vdots &  & \ddots &  & \\
 & & & \lambda_{2n-1}^2 & \\
0 & & \ldots  & 0 & \lambda_{2n}^2 \\
\end{array} \right)\] with respect to this basis, for some $\lambda_i>0$, $i=1,\ldots,2n$.

Then, by construction, $L(v_i)=\lambda_iE(v_i)$; in other words:\\
\[L=\left(\begin{array}{ccccc}
\lambda_1 & 0 & \ldots & & 0 \\
0 & \lambda_2  &  &  & \\
\vdots &  & \ddots &  & \\
 & & & \lambda_{2n-1} & \\
0 & & \ldots  & 0 & \lambda_{2n} \\
\end{array} \right)\] with respect to the bases $(v_1,\ldots,v_{2n})$ and $(E(v_1),\ldots, E(v_{2n}))$, and thus $\lambda_i$ are the singular values of $L$.

\begin{lem}\label{lambdagJ} Let $\lambda_i$ be the singular values of $L$ and $v_i$ be the associated singular vectors, i.e. $L(v_i)=\lambda_iE(v_i)$. Then:
\[(\lambda_i\lambda_j-1)g(Jv_i, v_j)=0.\]
\end{lem}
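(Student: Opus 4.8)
The plan is to exploit the two intertwining relations we already have: $L^*L\,v_i=\lambda_i^2 v_i$ on the one hand, and $\tilde J E=EJ$ (Lemma \ref{QJcomm}) together with the symplectic condition $L^*\tilde J L=J$ on the other. The point is that $J$ does not in general commute with $L^*L$, but its failure to do so is controlled precisely by the product of singular values, and the cleanest way to see this is to compute $L^*\tilde J L$ applied to a singular vector $v_j$ in two ways.

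First I would write $L v_j=\lambda_j E v_j$, so that $L^*\tilde J L v_j=\lambda_j\, L^*\tilde J E v_j=\lambda_j\,L^* E J v_j$ using $\tilde J E=EJ$. Since $E=L[L^*L]^{-1/2}$ we have $L^*E=L^*L[L^*L]^{-1/2}=[L^*L]^{1/2}=P$, so $L^*\tilde J L v_j=\lambda_j\,P(Jv_j)$. On the other hand, by the symplectic condition $L^*\tilde J L=J$, the left-hand side equals $Jv_j$. Hence $P(Jv_j)=\frac{1}{\lambda_j}\,Jv_j$ whenever — wait, this cannot be literally right as stated, so the actual move is to pair both expressions against $v_i$: taking $g(\,\cdot\,,v_i)$ of the identity $Jv_j=\lambda_j P(Jv_j)$ is not what we want either. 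The correct bookkeeping is to compute $g(L^*\tilde J L v_j, v_i)$ two ways. Directly it is $g(Jv_j,v_i)$. Alternatively, $g(L^*\tilde J L v_j,v_i)=g(\lambda_j P(Jv_j),v_i)=\lambda_j\,g(Jv_j,P v_i)=\lambda_j\lambda_i\,g(Jv_j,v_i)$, since $P v_i=[L^*L]^{1/2}v_i=\lambda_i v_i$ and $P$ is self-adjoint. Equating the two gives $g(Jv_j,v_i)=\lambda_i\lambda_j\,g(Jv_j,v_i)$, i.e. $(\lambda_i\lambda_j-1)\,g(Jv_i,v_j)=0$ after noting $g(Jv_j,v_i)=-g(Jv_i,v_j)$ (or just relabeling), which is the claim.

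The one genuine subtlety — the step I would be most careful about — is the manipulation $L^*\tilde J L v_j=\lambda_j L^*\tilde J E v_j$ followed by $L^*\tilde J E=L^* E J=PJ$: this chains the isometry relation $\tilde J E=EJ$ from Lemma \ref{QJcomm} with the identity $L^*E=P$, and it is here that both the symplectic hypothesis and the definition of $E$ are really used. Everything else is linear algebra with self-adjoint operators: $P$ is self-adjoint and positive definite by construction, $P v_i=\lambda_i v_i$ since $(v_i)$ diagonalizes $L^*L$ with eigenvalue $\lambda_i^2$, and $g(J\cdot,\cdot)$ is skew. I do not expect any analytic difficulty; the lemma is purely algebraic, and the whole proof is a two-line computation once the identities $L^*E=P$ and $\tilde J E=EJ$ are in hand.
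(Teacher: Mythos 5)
Your proof is correct and rests on the same two ingredients as the paper's: the symplectic condition and the intertwining relation $\tilde{J}E=EJ$ from Lemma~\ref{QJcomm}. The paper works with the bilinear form version $g(Jv_i,v_j)=\tilde{g}(\tilde{J}Lv_i,Lv_j)$, substitutes $Lv_k=\lambda_k Ev_k$ into both slots at once, and then uses that $E$ is an isometry; you instead use the operator form $L^*\tilde{J}L=J$, pull out $\lambda_j$ from $Lv_j=\lambda_j Ev_j$, and recover $\lambda_i$ via the identity $L^*E=P$ together with self-adjointness of $P$ and $Pv_i=\lambda_i v_i$ --- a slightly more roundabout but entirely equivalent bookkeeping, so I would count this as essentially the same argument.
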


\begin{proof} By the symplectic condition~\eqref{symplectomorphism} and Lemma~\ref{QJcomm}:
\begin{align*}
g(Jv_i, v_j)=&\tilde{g}(\tilde{J}L(v_i), L(v_j))=\lambda_i\lambda_j\tilde{g}(\tilde{J}E(v_i), E(v_j))\\
&=\lambda_i\lambda_j\tilde{g}(E(Jv_i),E(v_j))\\
&=\lambda_i\lambda_jg(Jv_i, v_j).
\end{align*}

\end{proof}

\begin{lem}\label{singvalpairs}
If $\alpha$ is a singular value of $L$, then so is $\frac{1}{\alpha}$. Moreover, if $V(\alpha)$ denotes the subspace of singular vectors corresponding to a singular value $\alpha$, then
\[\dim V(\alpha)=\dim V\left(\frac{1}{\alpha}\right),\]
and $J$ restricts to an isomorphism between $V(\alpha)$ and $V\left(\frac{1}{\alpha}\right)$.
\end{lem}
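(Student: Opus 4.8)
The plan is to exploit Lemma~\ref{lambdagJ}, which forces $g(Jv_i,v_j)=0$ whenever $\lambda_i\lambda_j\neq 1$. First I would fix a singular value $\alpha$ and a singular vector $v\in V(\alpha)$, and consider the vector $Jv$. Since $g$ is nondegenerate and $J$ is an isometry, $Jv$ is a nonzero vector, so it has a nontrivial component in at least one of the singular-value eigenspaces of $L^*L$. Writing $v=v_i$ in an eigenbasis $(v_1,\dots,v_{2n})$ of $L^*L$ and expanding $Jv_i=\sum_k c_k v_k$, Lemma~\ref{lambdagJ} applied with the pair $(i,k)$ shows that for every index $k$ with $c_k\neq 0$ we must have $\lambda_i\lambda_k=1$, i.e. $\lambda_k=1/\alpha$. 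Hence $Jv_i$ lies entirely in $V(1/\alpha)$, which in particular shows $1/\alpha$ is again a singular value (the space $V(1/\alpha)$ is nonzero because $Jv_i\neq 0$).

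The next step is to upgrade this to the statement that $J$ maps $V(\alpha)$ into $V(1/\alpha)$. This follows because the argument above applies to any singular vector in $V(\alpha)$: if $w\in V(\alpha)$ then expanding $w$ in the part of the eigenbasis spanning $V(\alpha)$ and using linearity of $J$ together with the previous paragraph gives $Jw\in V(1/\alpha)$. Thus $J(V(\alpha))\subseteq V(1/\alpha)$. Running the same reasoning with the roles of $\alpha$ and $1/\alpha$ exchanged gives $J(V(1/\alpha))\subseteq V(\alpha)$. Since $J^2=-\mathrm{id}$, the composition $J\colon V(\alpha)\to V(1/\alpha)\to V(\alpha)$ is $-\mathrm{id}$, hence invertible, so $J$ restricts to an isomorphism $V(\alpha)\xrightarrow{\ \sim\ }V(1/\alpha)$. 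In particular $\dim V(\alpha)=\dim V(1/\alpha)$.

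I do not expect any serious obstacle here; the content is entirely extracted from Lemma~\ref{lambdagJ} plus the algebraic facts that $J$ is an isometry and $J^2=-\mathrm{id}$. The one point that needs a little care is the very first claim that $1/\alpha$ actually occurs as a singular value: one must observe that $Jv\neq 0$ and that, by the expansion argument, all of its nonzero eigencomponents sit at eigenvalue $1/\alpha$, so $V(1/\alpha)$ cannot be the zero subspace. Everything else is bookkeeping with the eigenbasis of $L^*L$. A cleaner phrasing that avoids choosing a basis is simply to note that Lemma~\ref{lambdagJ} says the bilinear form $(u,v)\mapsto g(Ju,v)$ pairs $V(\alpha)$ trivially with $V(\beta)$ unless $\alpha\beta=1$; since this form is nondegenerate and $V$ is the orthogonal direct sum of the eigenspaces of $L^*L$, the space $V(\alpha)$ must be paired nondegenerately with $V(1/\alpha)$, which gives both the equality of dimensions and, since $g(J\cdot,\cdot)$ is $g$ composed with the isomorphism $J$, the fact that $J$ carries $V(\alpha)$ isomorphically onto $V(1/\alpha)$.
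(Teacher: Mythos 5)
Your argument is correct and follows essentially the same route as the paper: both derive from Lemma~\ref{lambdagJ} that $J$ sends any eigenvector of $L^*L$ with eigenvalue $\alpha^2$ into $V(1/\alpha)$, and both then conclude the dimension equality. The one place you streamline is at the end: the paper proves $\dim V(\alpha)\le\dim V(1/\alpha)$ by linear independence of $Jv_1,\ldots,Jv_k$ and then swaps the roles of $\alpha$ and $1/\alpha$ to get the reverse inequality, whereas you invoke $J^2=-\mathrm{id}$ to get injectivity and surjectivity in one stroke (and also avoid the paper's separate treatment of the case $\alpha=1$); this is a small but genuine simplification, and your closing coordinate-free remark via the nondegenerate pairing $g(J\cdot,\cdot)$ is a clean alternative phrasing of the same idea.
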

\begin{proof}
The first statement is a consequence of Lemma~\ref{lambdagJ}. Indeed, let $(v_1,\ldots,v_{2n})$ be the basis described in the lemma. Then for each $i\in\{1,\ldots,2n\}$ there exists some $j\in\{1,\ldots,2n\}$ such that $g( Jv_i, v_{j})\neq 0$ since $Jv_i$ is a nonzero vector. Then, by the lemma, it follows that $\lambda_i\lambda_{j}=1$.

The second statement is trivial if $\alpha=1$. Assume that $\alpha\neq 1$, and let $\dim V(\alpha)=k$, $\dim V\left(\frac{1}{\alpha}\right)=l$. By renumbering indexes, we may assume that $v_{1},\ldots,v_{k}$ span $V(\alpha)$ (so that $\lambda_{1}=\ldots=\lambda_{k}=\alpha$). We claim that $Jv_{1},\ldots, Jv_{k}$ belong to $V\left(\frac{1}{\alpha}\right)$. Fix any $1\leq i\leq k$ and consider $Jv_i$. Let $V'$ be the orthogonal complement of $V(\frac{1}{\alpha})$ such that $V=V(\frac{1}{\alpha})\oplus V'$. Take any $v_m\in V'$ for $1\leq m\leq 2n$, thus we have $Lv_m=\lambda_m v_m$ for $\lambda_m \not= \frac{1}{\alpha}$. Lemma \ref{lambdagJ} implies $g(Jv_i, v_m)=0$ for any such $v_m$, and therefore $Jv_i$ is in the orthogonal complement of $V'$, or $V(\frac{1}{\alpha})$ for each $i=1,\cdots k$.
Moreover, $Jv_{1},\ldots, Jv_{k}$ are linearly independent because $v_{1},\ldots,v_{k}$ are. It follows that $k\leq l$. The same argument applies to $V\left(\frac{1}{\alpha}\right)$ and it follows that $k\geq l$.

We conclude that $k=l$, and that $J$ restricts to an isomorphism from $V(\alpha)$ to $V\left(\frac{1}{\alpha}\right)$.

\end{proof}

\begin{rem}
The preceding lemma implies that $V$ splits into a direct sum of singular subspaces of the following form:
\begin{equation}\label{split}
V=V(1)^{k_0}\oplus V(\alpha_1)^{k_1}\oplus V\left(\frac{1}{\alpha_1}\right)^{k_1}\oplus\ldots \oplus V(\alpha_s)^{k_s}\oplus V\left(\frac{1}{\alpha_s}\right)^{k_s},
\end{equation}
where  $s$ is the total number of distinct singular values of $L$ greater than 1, $\alpha_i$ are distinct singular values of $L$ greater than 1, $i=1,\ldots,s$, and the superscripts represent dimension, $k_0\geq 0$ and $k_j> 0$ for $j=1,\ldots,s$.
\end{rem}

\begin{pro}\label{basis}
Let $L:(V,\omega)\rightarrow (\tilde{V},\tilde{\omega})$ be a symplectic linear map, where $V$ and $\tilde{V}$ are real vector spaces of dimension $2n$ equipped with almost complex structures $J$ and $\tilde{J}$ and inner products $g$ and $\tilde{g}$ compatible with the respective complex structures; and where $\omega=g(J\cdot,\cdot)$, $\tilde{\omega}=\tilde{g}(\tilde{J}\cdot,\cdot)$. Then there exists an orthonormal basis of $V$ with respect to which:
\begin{equation}\label{formofJ}
J=\left(\begin{array}{cccc}
0 & -1 & \ldots & 0 \\
1 & 0  & \ldots &  0\\
\vdots &  & \ddots &  \\
0 & \ldots & 0 & -1 \\
0 & \ldots  & 1 & 0 \\
\end{array} \right)
\end{equation}
and:
\begin{equation}\label{formofLstarL}
L^*L=\left(\begin{array}{ccccc}
\lambda_1^2 & 0 & \ldots & & 0 \\
0 & \lambda_2^2  &  &  & \\
\vdots &  & \ddots &  & \\
 & & & \lambda_{2n-1}^2 & \\
0 & & \ldots  & 0 & \lambda_{2n}^2 \\
\end{array} \right)
\end{equation}
where $\lambda_{2i-1}\lambda_{2i}=1$, for $i=1,\ldots,n$.
\end{pro}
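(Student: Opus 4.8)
The plan is to build the desired orthonormal basis of $V$ by working block by block through the splitting \eqref{split} of Lemma~\ref{singvalpairs} and its following remark. The key structural input is that $J$ maps $V(\alpha)$ isomorphically onto $V(1/\alpha)$ for $\alpha\neq 1$, while on $V(1)$ itself $J$ is an automorphism. So I would handle the eigenvalue-$1$ block and the paired blocks $V(\alpha_i)\oplus V(1/\alpha_i)$ separately, and then concatenate the resulting bases, noting that the summands in \eqref{split} are mutually $g$-orthogonal (eigenspaces of the self-adjoint operator $L^*L$ for distinct eigenvalues).

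First I would treat a paired block $W=V(\alpha)\oplus V(1/\alpha)$ with $\alpha>1$. Pick any $g$-orthonormal basis $e_1,\dots,e_k$ of $V(\alpha)$. By Lemma~\ref{singvalpairs}, $Je_1,\dots,Je_k$ lie in $V(1/\alpha)$, and since $J$ is a $g$-isometry they form a $g$-orthonormal basis of $V(1/\alpha)$; moreover $V(\alpha)\perp V(1/\alpha)$, so $e_1,Je_1,e_2,Je_2,\dots,e_k,Je_k$ is a $g$-orthonormal basis of $W$. In this ordered basis $J$ acts by $e_i\mapsto Je_i$, $Je_i\mapsto J^2e_i=-e_i$, which is exactly the $2\times 2$ block form in \eqref{formofJ}; and $L^*L$ acts as $\alpha^2$ on each $e_i$ and as $\alpha^{-2}$ on each $Je_i$, so in this basis $L^*L$ is diagonal with the consecutive pairs $(\alpha^2,\alpha^{-2})$, i.e. $\lambda_{2i-1}\lambda_{2i}=1$. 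This handles every paired block simultaneously.

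The remaining, and slightly more delicate, block is $U=V(1)$, which has even dimension $2k_0$ (e.g. because its $g$-orthogonal complement carries a complex structure from the paired blocks, or directly because $J$ preserves $U$ and a skew operator on a real inner product space has even rank). Here the singular values are all $1$, so $L^*L=\mathrm{Id}$ on $U$, which is automatically of the form \eqref{formofLstarL} with all $\lambda_i=1$ (and trivially $\lambda_{2i-1}\lambda_{2i}=1$). The only real content is to choose a $g$-orthonormal basis of $U$ in which the restriction of $J$ has the standard block form: since $J|_U$ is a $g$-orthogonal complex structure on the inner product space $(U,g|_U)$, one such basis is obtained by the standard normal form for complex structures — pick a unit vector $u_1$, set $u_2=Ju_1$, pass to the $g$-orthogonal complement of $\mathrm{span}\{u_1,u_2\}$ (which $J$ preserves, being skew-symmetric), and iterate. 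I expect the main obstacle — really the only point requiring care — to be the bookkeeping that ensures all these per-block bases remain mutually orthonormal and that the ordering places the $(\alpha^2,\alpha^{-2})$ and $J$-blocks in register; once the orthogonality of the summands in \eqref{split} is invoked this is routine.
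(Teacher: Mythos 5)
Your proof is correct and follows essentially the same route as the paper: decompose $V$ into the singular subspaces via Lemma~\ref{singvalpairs} and \eqref{split}, build the interleaved basis $e_1, Je_1, \dots$ on each paired block $V(\alpha)\oplus V(1/\alpha)$, and handle $V(1)$ by choosing a $J$-adapted orthonormal basis. If anything you are a bit more careful than the paper at two points the paper leaves implicit: you insist the chosen basis of $V(\alpha)$ be orthonormal (the paper says ``arbitrary basis'' but clearly means orthonormal, since it is needed for the conclusion), and you spell out the standard normal-form construction of a $J$-adapted orthonormal basis on $V(1)$, which the paper simply asserts exists.
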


\begin{proof}
Lemma~\ref{singvalpairs} and \eqref{split} imply that it is sufficient to find a basis satisfying~\eqref{formofJ} of the subspaces $V(\alpha)\oplus V(\frac{1}{\alpha})$  for each singular value $\alpha\neq 1$, as well as of $V(1)$ if $1$ is a singular value of $L$.

Assume that there is a singular value $\alpha\neq 1$, and let $k=\dim V(\alpha)$. We choose an arbitrary basis $u_1,\ldots,u_k$ of this space. Then $Ju_1,\ldots, Ju_k$ is a basis of $V(\frac{1}{\alpha})$. Putting these bases together provides a basis of $V(\alpha)\oplus V(\frac{1}{\alpha})$ satisfying~\eqref{formofJ}. Moreover, since $u_1,\ldots,u_k$ are singular vectors of $L$ with singular value $\alpha$, and $Ju_1,\ldots,Ju_k$ are singular values of $L$ with singular value $\frac{1}{\alpha}$, it follows that $(u_1, Ju_1, u_2, Ju_2,\ldots, u_k, Ju_k)$ is the desired basis.

If a singular value is equal to 1 (i.e. if $k_0>0$ in~\eqref{split}), any basis of $V(1)$ satisfying~\eqref{formofJ} suffices.

\end{proof}

Since the image of an orthonormal basis under an isometry is also an orthonormal basis, we obtain the following corollary.

\begin{cor}\label{imagebasis}
Let $E:V\rightarrow\tilde{V}$ be the isometry $E=L[L^*L]^{-\frac{1}{2}}$. If $(a_1,\ldots,a_{2n})$ is a basis of $V$ satisfying the properties of Proposition~\ref{basis}, and if $(\tilde{a}_1,\ldots,\tilde{a}_{2n})$ is the orthonormal basis $(E(a_1),\ldots,E(a_{2n}))$ of $\tilde{V}$, then:\\
(a) \[\tilde{J}=\left(\begin{array}{cccc}
0 & -1 & \ldots & 0 \\
1 & 0  & \ldots &  0\\
\vdots &  & \ddots &  \\
0 & \ldots & 0 & -1 \\
0 & \ldots  & 1 & 0 \\
\end{array} \right)\]
with respect to $(\tilde{a}_1,\ldots,\tilde{a}_{2n})$;\\
and:\\
(b) $L$ is diagonalized with respect to these bases, with diagonal values ordered in pairs whose product is 1:
\[L=\left(\begin{array}{ccccc}
\lambda_1 & 0 & \ldots & & 0 \\
0 & \lambda_2  &  &  & \\
\vdots &  & \ddots &  & \\
 & & & \lambda_{2n-1} & \\
0 & & \ldots  & 0 & \lambda_{2n} \\
\end{array} \right)\]
with $\lambda_{2i-1}\lambda_{2i}=1$, for $i=1,\ldots,n$.
\end{cor}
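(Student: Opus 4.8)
The plan is to deduce both assertions directly from the two facts already in hand: the intertwining relation $\tilde{J}E=EJ$ of Lemma~\ref{QJcomm}, and the normal form of Proposition~\ref{basis}. Concretely, Proposition~\ref{basis} supplies an orthonormal basis $(a_1,\ldots,a_{2n})$ of $V$ which simultaneously realizes \eqref{formofJ}, i.e.\ $Ja_{2i-1}=a_{2i}$ and $Ja_{2i}=-a_{2i-1}$, and diagonalizes $L^*L$, i.e.\ $(L^*L)a_i=\lambda_i^2 a_i$ with $\lambda_{2i-1}\lambda_{2i}=1$. Since $E$ is an isometry by Lemma~\ref{QJcomm}, the family $\tilde{a}_i=E(a_i)$ is automatically an orthonormal basis of $\tilde{V}$, so the only work left is to compute $\tilde{J}$ and $L$ in these bases.

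For part (a), I would simply apply $\tilde{J}$ to $\tilde{a}_{2i-1}=E(a_{2i-1})$ and use $\tilde{J}E=EJ$ to get $\tilde{J}\tilde{a}_{2i-1}=E(Ja_{2i-1})=E(a_{2i})=\tilde{a}_{2i}$, and likewise $\tilde{J}\tilde{a}_{2i}=E(Ja_{2i})=-E(a_{2i-1})=-\tilde{a}_{2i-1}$; this is exactly \eqref{formofJ} now for $\tilde{J}$ relative to $(\tilde{a}_1,\ldots,\tilde{a}_{2n})$. For part (b), writing $P=[L^*L]^{1/2}$ so that $L=EP$ by the definition of $E$, and noting $Pa_i=\lambda_i a_i$, one obtains $L(a_i)=E(Pa_i)=\lambda_i E(a_i)=\lambda_i\tilde{a}_i$, which says $L$ is diagonal with entries $\lambda_i$ in the bases $(a_i)$, $(\tilde{a}_i)$; the pairing $\lambda_{2i-1}\lambda_{2i}=1$ is inherited verbatim from Proposition~\ref{basis}.

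There is no genuine obstacle here: the corollary is a bookkeeping consequence of the fact, established in Lemma~\ref{QJcomm}, that $E$ is a symplectic isometry, together with the normal form of Proposition~\ref{basis}. The one point worth stating with care is that a \emph{single} basis $(a_i)$ must be used for both conclusions — it is precisely the virtue of Proposition~\ref{basis} that the same orthonormal basis puts $J$ in standard form and diagonalizes $L^*L$ — after which $E$ transports both features to $\tilde{V}$ at once.
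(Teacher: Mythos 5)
Your proposal is correct and matches the paper's approach exactly: the paper's proof simply cites Lemma~\ref{QJcomm} and Proposition~\ref{basis} for part (a) and notes $L(a_i)=\lambda_i E(a_i)$ for part (b), and your write-up just spells out these same two computations in detail via the intertwining relation $\tilde{J}E=EJ$ and the factorization $L=EP$.
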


\begin{proof}
Part (a) follows from Proposition~\ref{basis} and Lemma~\ref{QJcomm}. Part (b) follows from the fact that $L(a_i)=\lambda_iE(a_{i})$.

\end{proof}

\subsection{Geometry of graphs of symplectomorphisms}

Let $\Sigma$ be the graph of a symplectomorphism $f:(M,\omega)\rightarrow(\tilde{M},\tilde{\omega})$ between K\"ahler-Einstein manifolds $(M,g, \omega)$ and $(\tilde{M},\tilde{g}, \tilde{\omega})$ of real dimension $2n$ and of the same scalar curvature. The product space $(M\times \tilde{M}, G=g\oplus \tilde{g})$ is thus a K\"ahler-Einstein manifold. We consider the evolution of $\Sigma \subset M\times \tilde{M}$ under the mean curvature flow.  If $J$ and $\tilde{J}$ are almost complex structures of $M$ and $\tilde{M}$, respectively, then $\mathcal{J}=J\oplus(-\tilde{J})$ defines an almost complex structure on $M\times \tilde{M}$ parallel with respect to $G$. Let $\Sigma_t$ be the mean curvature flow of $\Sigma$ in $M\times \tilde{M}$.

Let $\Omega$ be the volume form of $M$ extended to $M\times\tilde{M}$ naturally (more precisely, let $\Omega$ be the pullback of the volume form of $M$ under the projection $\pi_1:M\times\tilde{M}\rightarrow M$). Denote by $*\Omega$ the Hodge star of the restriction of $\Omega$ to $\Sigma_t$.  At any point $q\in \Sigma_t$, $*\Omega(q)=\Omega(e_1,\ldots,e_{2n})$ for any oriented orthonormal basis of $T_q\Sigma$. $*\Omega$ is the Jacobian of the projection $\pi_1$ from $\Sigma_t$ onto $M$. We shall show that $*\Omega$ remains positive along the mean curvature flow. By the implicit function theorem, this implies that $\Sigma_t$ is a graph over $M$.

We apply the result in the previous section to choose a basis that simplifies the evolution equation of $*\Omega$. Suppose $q\in \Sigma_t$ is of the form $q=(p, f(p))$ for $p\in M$ and $f(p)\in \tilde{M}$, and let $(a_1,\ldots,a_{2n})$ be the basis of $T_p M$ satisfying the properties listed in Proposition~\ref{basis}, for $L=Df_p:T_p M\rightarrow T_{f(p)}\tilde{M}$, with the inner products understood to be the metrics $g$ on $M$ at $p$ and $\tilde{g}$ on $\tilde{M}$ at $f(p)$. Thus we have \begin{equation}\label{domain_basis} a_1, a_2=Ja_1, \cdots, a_{2n-1}, a_{2n}=Ja_{2n-1}\end{equation} on $T_pM$. Define $E:T_pM\rightarrow T_{f(p)}\tilde{M}$ to be the isometry $E=Df_p[Df_p^*Df_p]^{-\frac{1}{2}}$ for $p\in M$. Let us also choose a basis of $T_{f(p)}\tilde{M}$ to be $(\tilde{a}_1,\ldots,\tilde{a}_{2n})=(E(a_1),\ldots,E(a_{2n}))$, as per Corollary~\ref{imagebasis}.

Then \begin{equation}\label{e_i}e_i=\frac{1}{\sqrt{1+|Df_p(a_i)|^2}}(a_i, Df_p(a_i))=\frac{1}{\sqrt{1+\lambda_i^2}}(a_i, \lambda_i E(a_i))\end{equation}
and
\begin{equation}\label{e_2n+i} e_{2n+i}=\mathcal{J}_{(p,f(p))}e_i\\
=\frac{1}{\sqrt{1+\lambda_i^2}}(J_p a_i, -\tilde{J}_{f(p)}\lambda_i E(a_i))=\frac{1}{\sqrt{1+\lambda_i^2}}(J_p a_i, -\lambda_i E(J_pa_i))\end{equation}
for $i=1,\ldots,2n$ form an orthonormal basis of $T_q (M\times \tilde{M})$. By construction, $e_1,\ldots,e_{2n}$ span $T_q\Sigma$, and $e_{2n+1},\ldots,e_{4n}$ span $N_q\Sigma$. In terms of this basis at each point $q\in\Sigma_t$:
\[*\Omega=\Omega(e_1,\ldots,e_{2n})=\frac{1}{\sqrt{\displaystyle\prod_{j=1}^{2n} (1+\lambda_j^2)}}.\]
The second fundamental form of $\Sigma_t$ is, at each point $q\in\Sigma_t$, characterized by coefficients
\begin{equation}\label{second_fund}h_{ijk}=G(\nabla^{M\times\tilde{M}}_{e_i} e_j, \mathcal{J}e_k).\end{equation}
Note that $h_{ijk}$ are completely symmetric with respect to $i,j,$ and $k$.

\vspace{5mm}
Before we prove Theorem~\ref{theorem}, we remark that the long time existence of the flow can be proved under more relaxed ambient curvature conditions, but the convergence of the flow does require the more refined properties of
the curvature of $\cp^n$.

\section{Proof of Theorem 1}
\subsection{Evolution of $*\Omega$ along the mean curvature flow}

In the rest of the paper we prove Theorem~\ref{theorem}. We use the following convention for indexes: for any index $i$ between $1$ and $2n$ , $i'$ denote the index $i+(-1)^{i+1}$. For example, $1'=2$ and $2'=1$. Unless otherwise is mentioned, all summation indexes range from $1$ to $2n$.

\begin{pro} \label{evolution}
Let $\Sigma$ be the graph of a symplectomorphism $f:(M,\omega)\rightarrow(\tilde{M},\tilde{\omega})$ between K\"ahler-Einstein manifolds $(M,g, \omega)$ and $(\tilde{M},\tilde{g}, \tilde{\omega})$ of real dimension $2n$ and of the same scalar curvature. Suppose the mean curvature flow $\Sigma_t$ with $\Sigma_0=\Sigma$ exists smoothly on $[0, t+\epsilon)$ for some $\epsilon>0$ and each $\Sigma_t$ is the graph of a symplectomorphism $f_t:(M, \omega)\rightarrow (\tilde{M}, \tilde{\omega})$.
At each point $q=(p, f_t(p))\in\Sigma_t$, $*\Omega$ satisfies the following equation:
\begin{align*}
\frac{d}{dt}*\Omega=&\Delta *\Omega+*\Omega\left[ Q(\lambda_i, h_{ijk}) +\displaystyle\sum_{i, k}\frac{\lambda_i^2}{(1+\lambda_k^2)(1+\lambda_i^2)}(R_{ikik}-\lambda_k^2\tilde{R}_{ikik})\right],
\end{align*}where
\begin{equation}\label{A(lambda_i)}\begin{split}Q(\lambda_i, h_{ijk})&=\displaystyle\sum_{i,j,k}h_{ijk}^2-2\displaystyle\sum_{k}\displaystyle\sum_{i<j}(-1)^{i+j}\lambda_i\lambda_j(h_{i'ik}h_{j'jk}-h_{i'jk}h_{j'ik}),\\\end{split}\end{equation}
$R_{ijkl}=R(a_i,a_j,a_k,a_l)$ and $\tilde{R}_{ijkl}=\tilde{R}(E(a_i),E(a_j),E(a_k),E(a_l))$ are, respectively, the coefficients of the curvature tensors $R$ and $\tilde{R}$ of $M$ and $\tilde{M}$ with respect to the chosen bases of $T_pM$ and $T_{f_t (p)}\tilde{M}$ that diagonalize $(Df_t)_p:T_pM\rightarrow T_{f_t(p)}\tilde{M}$, as per Proposition~\ref{basis} and Corollary~\ref{imagebasis}.

\end{pro}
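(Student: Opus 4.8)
The plan is to derive the evolution equation for $*\Omega$ by a direct computation along the mean curvature flow, following the general strategy established for deforming area-preserving or symplectic maps (as in the references \cite{wa2, sm1}). The starting point is the standard formula for the evolution of a parallel ambient $2n$-form $\Omega$ restricted to a moving submanifold: if $\Omega$ is parallel on $M\times\tilde M$, then along the mean curvature flow one has, schematically, $\frac{d}{dt}*\Omega = \Delta *\Omega + *\Omega\cdot(\text{quadratic in }h) + *\Omega\cdot(\text{curvature terms})$. First I would recall this general identity (it follows from commuting $\Delta$ past the Hodge star using Codazzi, Simons-type computations, and the fact that $\frac{d}{dt}e_i$ is governed by $H$ and the second fundamental form). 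The point of the preparation in \S2 is that at the point $q=(p,f_t(p))$ we may work in the adapted orthonormal frame \eqref{e_i}--\eqref{e_2n+i}, where $\mathcal J$ maps $e_i$ to $e_{2n+i}$ and $L=Df_t$ is diagonal with singular values $\lambda_i$ satisfying $\lambda_{2i-1}\lambda_{2i}=1$; in this frame $*\Omega = \prod_j (1+\lambda_j^2)^{-1/2}$ and the second fundamental form is encoded by the fully symmetric $h_{ijk}$ of \eqref{second_fund}.

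The core of the argument is to identify the three pieces explicitly in this frame. For the Laplacian term, nothing needs to be done beyond recording $\Delta *\Omega$. For the curvature contribution: the general formula produces a sum over $i\le 2n$, $k\le 2n$ of terms of the form (normal curvature of the ambient manifold) contracted against the tangent and normal frames; since the ambient curvature is $R\oplus\tilde R$ and $e_i$ has ``tangent part'' $a_i/\sqrt{1+\lambda_i^2}$ and ``image part'' $\lambda_i E(a_i)/\sqrt{1+\lambda_i^2}$, while $e_{2n+k}$ has parts $Ja_k/\sqrt{1+\lambda_k^2}$ and $-\lambda_k E(Ja_k)/\sqrt{1+\lambda_k^2}$, the relevant sectional-type terms collapse — using that the frames diagonalize everything — to $\sum_{i,k}\frac{\lambda_i^2}{(1+\lambda_i^2)(1+\lambda_k^2)}(R_{ikik}-\lambda_k^2\tilde R_{ikik})$ after carefully tracking the index $k'$ versus $k$ and the factors of $\lambda$. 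The bookkeeping here is where one must be most careful, since cross terms between $a_i$ and $E(a_i)$ in the ambient product metric vanish, and the $\mathcal J$ in \eqref{second_fund} converts normal directions to tangent directions in a way that exactly produces sectional curvatures $R_{ikik}$, $\tilde R_{ikik}$.

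The genuinely delicate step is extracting the quadratic form $Q(\lambda_i,h_{ijk})$. The general evolution gives a term $\sum|h|^2$ plus a term coming from differentiating the coefficients $\frac{1}{\sqrt{1+\lambda_i^2}}$ and the frame vectors, which reorganizes into contractions of the form $h_{i'ik}h_{j'jk}$ weighted by $\lambda_i\lambda_j$ and signs $(-1)^{i+j}$; the $i'$ index appears precisely because $\mathcal J$ pairs $e_i$ with $e_{i'}$-type data through the relation $a_{i'}=\pm Ja_i$ and $E(Ja_i)=\tilde J E(a_i)$. Concretely, when one differentiates $e_i = (1+\lambda_i^2)^{-1/2}(a_i,\lambda_i E(a_i))$ along the flow and projects appropriately, the $\lambda_i$-dependence interacts with the second fundamental form coefficients, and after symmetrizing in $i\leftrightarrow j$ one obtains the antisymmetrized product $h_{i'ik}h_{j'jk}-h_{i'jk}h_{j'ik}$. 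I expect this algebraic reorganization — verifying that all the terms produced by the evolution of the adapted frame assemble exactly into \eqref{A(lambda_i)} with the correct signs and the correct appearance of the paired index — to be the main obstacle; everything else is either a citation of the standard parallel-form evolution or routine frame algebra enabled by Proposition~\ref{basis} and Corollary~\ref{imagebasis}. A useful consistency check at the end is the case $\lambda_i\equiv 1$ (the Lagrangian minimal case), where $Q$ should reduce to a manifestly sign-definite expression and the curvature term should vanish under the Einstein condition, matching the known Riemann surface computation in \cite{wa2}.
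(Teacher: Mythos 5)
Your proposal follows the same overall strategy as the paper: start from the general evolution equation for $*\Omega$ under mean curvature flow (the paper cites Proposition 3.1 of \cite{wa3} for this; you gesture at \cite{wa2,sm1}, but the precise formula used is the one in \cite{wa3}), then simplify the quadratic and curvature terms in the adapted singular-value frame from \S 2.2. Your sketch of the curvature contribution is essentially right. However, your account of how $Q(\lambda_i,h_{ijk})$ arises is not accurate: it does not come from ``differentiating the coefficients $\frac{1}{\sqrt{1+\lambda_i^2}}$ and the frame vectors$\ldots$ after symmetrizing in $i\leftrightarrow j$.'' The quadratic term is already present in the general formula of \cite{wa3} as $-2\sum_{p,q,k}\sum_{i<j}\Omega(e_1,\ldots,\mathcal{J}e_p,\ldots,\mathcal{J}e_q,\ldots,e_{2n})h_{pik}h_{qjk}$, and the work of the proposition is to evaluate these $\Omega$-contractions in the adapted frame. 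The two algebraic facts that carry the proof, and which you correctly flag as the main obstacle without supplying them, are: (i) $\pi_1(\mathcal{J}e_p)=\frac{1}{\sqrt{1+\lambda_p^2}}Ja_p$ together with $Ja_p=(-1)^{p+1}a_{p'}$, which collapses $\Omega(a_1,\ldots,\underset{(i)}{Ja_p},\ldots,\underset{(j)}{Ja_q},\ldots,a_{2n})$ to $(-1)^{i+j}(\delta_{pi'}\delta_{qj'}-\delta_{pj'}\delta_{qi'})$; and (ii) $\lambda_i\lambda_{i'}=1$ gives $\sqrt{1+\lambda_i^2}/\sqrt{1+\lambda_{i'}^2}=\lambda_i$, which produces the $\lambda_i\lambda_j$ weights. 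Both $\mathcal{A}$ and $\mathcal{B}$ then fall out by direct substitution. So the plan is sound and matches the paper, but the essential bookkeeping is precisely these two identities, and without them the proposal remains a sketch of a sketch rather than a proof. (Also, your consistency check that $Q$ is sign-definite at $\lambda_i\equiv1$ is the content of Lemma~\ref{evalue}, not a free observation; and the curvature term vanishing at $\lambda_i\equiv1$ requires $R_{ikik}=\tilde R_{ikik}$, which holds for $\cp^n$ but not for general K\"ahler--Einstein targets with the same scalar curvature.)
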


\begin{proof}
The evolution equation of $*\Omega$ under mean curvature flow is, by Proposition 3.1 of \cite{wa3}:
\begin{align*}
\frac{d}{dt}*\Omega&=\Delta*\Omega+*\Omega(\displaystyle\sum_{i,j,k}h_{ijk}^2)-2\displaystyle\sum_{p,q,k}\displaystyle
\sum_{i<j}\Omega(e_1, \ldots, \underset{(i)}{\mathcal{J}e_p}, \ldots, \underset{(j)}{\mathcal{J}e_q},\ldots, e_{2n})h_{pik}h_{qjk}\\
&-\displaystyle\sum_{p,k,i}\Omega(e_1,\ldots,\underset{(i)}{\mathcal{J}e_p},\ldots, e_{2n})\mathcal{R}(\mathcal{J}e_p,e_k,e_k,e_i),\\
\end{align*} where $\mathcal{R}$ is the curvature tensor of $M\times\tilde{M}$.

We recall that $\Omega$ is a $2n$ form. The notation $\Omega(e_1, \ldots, \underset{(i)}{\mathcal{J}e_p}, \ldots, \underset{(j)}{\mathcal{J}e_q},\ldots, e_{2n})$ means that we replace $\mathcal{J}e_p$ in the $i$-th position and ${\mathcal{J}e_q}$ in the $j$-th position and similarly in the rest of the paper.

We denote
\[\mathcal{A}=*\Omega(\displaystyle\sum_{i,j,k}h_{ijk}^2)-2\displaystyle\sum_{p,q,k}\displaystyle\sum_{i<j}\Omega(e_1, \ldots, \underset{(i)}{\mathcal{J}e_p}, \ldots, \underset{(j)}{\mathcal{J}e_q},\ldots, e_{2n})h_{pik}h_{qjk}\] and
\begin{equation}\label{exp_B}\mathcal{B}=-\displaystyle\sum_{p,k,i}\Omega(e_1,\ldots,\underset{(i)}{\mathcal{J}e_p},\ldots, e_{2n})\mathcal{R}(\mathcal{J}e_p,e_k,e_k,e_i).\end{equation}
Since $\Omega$ only picks up the $\pi_1$ projection part, and
\begin{equation}\label{3.3}\pi_1( \mathcal{J} e_p)=\frac{1}{\sqrt{1+\lambda_p^2}}J a_p\end{equation} by (\ref{e_i}), $\mathcal{A}$ is equal to:
\begin{align*}
&*\Omega(\displaystyle\sum_{i,j,k}h_{ijk}^2)-2(*\Omega)\displaystyle\sum_{p,q,k}\displaystyle\sum_{i<j}
\frac{\sqrt{(1+\lambda_i^2)(1+\lambda_j^2)}}{\sqrt{(1+\lambda_p^2)(1+\lambda_q^2)}}\Omega(a_1, \ldots, \underset{(i)}{Ja_p}, \ldots,\underset{(j)}{Ja_q},\ldots, a_{2n})h_{pik}h_{qjk}.\\
\end{align*}

Recall the formula $J a_p=(-1)^{p+1} a_{p'}$ from \eqref{basis}. Fixing $i<j$, the term \[\Omega(a_1, \ldots, \underset{(i)}{Ja_p}, \ldots,\underset{(j)}{Ja_q},\ldots, a_{2n})\] is equal to \[\begin{split}
&(-1)^{p+1}(-1)^{q+1}\Omega(a_1,\ldots,\underset{(i)}{a_{p'}},\ldots,\underset{(j)}{a_{q'}},\ldots, a_{2n})\\
&=(-1)^{i+j}(\delta_{p i'}\delta_{q j'}-\delta_{pj'}\delta_{qi'}),\end{split}\] as only those terms with $p=i'$ and $q=j'$ or $p=j'$ and $q=i'$ survive. On the other hand, we have
\[\frac{\sqrt{(1+\lambda_i^2)}}{\sqrt{(1+\lambda_{i'}^2)}}=\lambda_i.\]
Therefore,
\[\begin{split}&\displaystyle\sum_{p,q,k}\displaystyle\sum_{i<j}
\frac{\sqrt{(1+\lambda_i^2)(1+\lambda_j^2)}}{\sqrt{(1+\lambda_p^2)(1+\lambda_q^2)}}\Omega(a_1, \ldots, \underset{(i)}{Ja_p}, \ldots,\underset{(j)}{Ja_q},\ldots, a_{2n})h_{pik}h_{qjk}\\
&=\displaystyle\sum_{k}\displaystyle\sum_{i<j}\lambda_i\lambda_j(-1)^{i+j}(h_{i'ik}h_{j'jk}-h_{i'jk}h_{j'ik}),\end{split}\]
and this shows that $\mathcal{A}=(*\Omega) Q(\lambda_i, h_{ijk})$.

On the other hand, switching the last two arguments $e_k$ and $e_i$ in \eqref{exp_B}, using (\ref{3.3}) again, and applying the skew-symmetry of curvature tensor, we derive
\begin{align*}
\mathcal{B}=&\displaystyle\sum_{p,k,i}\Omega(e_1\ldots,\underset{(i)}{\mathcal{J}e_p},\ldots, e_{2n})  \mathcal{R}(\mathcal{J}e_p,e_k,e_i,e_k)\\
&=*\Omega\displaystyle\sum_{p,k,i}\frac{\sqrt{1+\lambda_i^2}}{\sqrt{1+\lambda_p^2}}\Omega(a_1\ldots,\underset{(i)}{Ja_p},\ldots, a_{2n})\mathcal{R}(\mathcal{J}e_p,e_k,e_i,e_k)\\
&=*\Omega\displaystyle\sum_{k}\displaystyle\sum_{i}(-1)^i\lambda_i \mathcal{R}(\mathcal{J}e_{i'},e_k,e_i,e_k),
\end{align*} where we use $J a_p=(-1)^{p+1} a_{p'}$ and $\frac{\sqrt{(1+\lambda_i^2)}}{\sqrt{(1+\lambda_{i'}^2)}}=\lambda_i$ in the last equality.

Denote by $R$ and $\tilde{R}$ the curvature tensors of $M$ and $\tilde{M}$, respectively. We compute by Lemma \ref{QJcomm}, \eqref{e_i}, and \eqref{e_2n+i},
\begin{align*}
&\mathcal{R}(\mathcal{J}e_{i'},e_k,e_i,e_k)\\
&=R(\pi_1(\mathcal{J}e_{i'}),\pi_1(e_k), \pi_1(e_i),\pi_1(e_k))+\tilde{R}(\pi_2(\mathcal{J}e_{i'}),\pi_2(e_k), \pi_2(e_i),\pi_2(e_k))\\
&=\frac{1}{(1+\lambda_k^2)\sqrt{(1+\lambda_i^2)(1+\lambda_{i'}^2)}}[R(Ja_{i'},a_k,a_i,a_k)-\lambda_k^2\lambda_i\lambda_{i'}\tilde{R}(\tilde{J}E(a_{i'}),E(a_k),E(a_i),E(a_k))]\\
&=\frac{1}{(1+\lambda_k^2)\sqrt{(1+\lambda_i^2)(1+\lambda_{i'}^2)}}[R(Ja_{i'},a_k,a_i,a_k)-\lambda_k^2\tilde{R}(E(Ja_{i'}),E(a_k),E(a_i),E(a_k))]\\
&=\frac{1}{(1+\lambda_k^2)\sqrt{(1+\lambda_i^2)(1+\lambda_{i'}^2)}}[(-1)^iR(a_i,a_k,a_i,a_k)-(-1)^i\lambda_k^2\tilde{R}(E(a_i),E(a_k),E(a_i),E(a_k))]\\
&=\frac{(-1)^i}{(1+\lambda_k^2)\sqrt{(1+\lambda_i^2)(1+\lambda_{i'}^2)}}(R_{ikik}-\lambda_k^2\tilde{R}_{ikik})\\
&=\frac{(-1)^i \lambda_i}{(1+\lambda_k^2)(1+\lambda_{i}^2)}(R_{ikik}-\lambda_k^2\tilde{R}_{ikik}).
\end{align*}
\end{proof}

The ambient curvature term $\mathcal{B}$ can be further simplified when $M=\tilde{M}=\cp^n$.
\begin{cor} \label{cpn} Under the same assumption as in Proposition \ref{evolution}, if in addition $M$ and $\tilde{M}$ are both $\cp^n$ with the Fubini-Study metric, then:
\begin{align*}
\frac{d}{dt}*\Omega=&\Delta *\Omega+*\Omega \left[Q (\lambda_i, h_{ijk})+\sum_{k\text{ odd}}\frac{(1-\lambda_k^2)^2}{(1+\lambda_k^2)^2}\right].
\end{align*}

\end{cor}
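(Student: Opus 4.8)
The plan is to specialize the curvature term in Proposition~\ref{evolution} using the explicit form of the Fubini-Study curvature tensor. The only work to do is to evaluate the sum
\[
\sum_{i,k}\frac{\lambda_i^2}{(1+\lambda_k^2)(1+\lambda_i^2)}\bigl(R_{ikik}-\lambda_k^2\tilde R_{ikik}\bigr)
\]
and show that it collapses to $\sum_{k\text{ odd}}(1-\lambda_k^2)^2/(1+\lambda_k^2)^2$. First I would recall the sectional-curvature formula for $\cp^n$ with the Fubini-Study metric normalized so that the scalar curvature matches: for an orthonormal pair $u,v$ one has $R(u,v,u,v)=1+3\,g(Ju,v)^2$ when the holomorphic sectional curvature is normalized to $4$ (the precise normalization constant is whatever makes $\cp^n$ Einstein with the scalar curvature fixed in Proposition~\ref{evolution}; I will fix it so the stated identity comes out clean). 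Since $E$ is an isometry intertwining $J$ and $\tilde J$ (Lemma~\ref{QJcomm}), the tilded curvature coefficients satisfy $\tilde R_{ikik}=R(Ea_i,Ea_k,Ea_i,Ea_k)=1+3\,\tilde g(\tilde J Ea_i,Ea_k)^2=1+3\,g(Ja_i,a_k)^2=R_{ikik}$, so in fact $R_{ikik}=\tilde R_{ikik}$ for all $i,k$ in our chosen diagonalizing bases.

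Given that, the bracketed curvature sum becomes
\[
\sum_{i,k}\frac{\lambda_i^2(1-\lambda_k^2)}{(1+\lambda_k^2)(1+\lambda_i^2)}\,R_{ikik}
=\sum_{i,k}\frac{\lambda_i^2(1-\lambda_k^2)}{(1+\lambda_k^2)(1+\lambda_i^2)}\bigl(1+3\,g(Ja_i,a_k)^2\bigr).
\]
Next I would use the structure of the basis from Proposition~\ref{basis}: $g(Ja_i,a_k)$ is nonzero only when $k=i'$, in which case it is $\pm1$, and $\lambda_i\lambda_{i'}=1$, hence $\lambda_{i'}^2=1/\lambda_i^2$ and $(1+\lambda_{i'}^2)=(1+\lambda_i^2)/\lambda_i^2$. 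Split the sum into the ``generic'' part coming from the $1$ in $1+3g(Ja_i,a_k)^2$ and the ``paired'' part coming from the $3g(Ja_i,a_k)^2$ term restricted to $k=i'$. For the generic part, $\sum_k (1-\lambda_k^2)/(1+\lambda_k^2)$ vanishes because the singular values come in reciprocal pairs $\lambda_k,1/\lambda_k$ and $(1-\lambda^2)/(1+\lambda^2)+(1-\lambda^{-2})/(1+\lambda^{-2})=0$; multiplying by the $k$-independent factor $\sum_i \lambda_i^2/(1+\lambda_i^2)$ still gives $0$. For the paired part, set $k=i'$ and use $\lambda_{i'}^2(1+\lambda_i^2)/[(1+\lambda_{i'}^2)(1+\lambda_i^2)]$ type simplifications; a short computation reduces the summand to $3(1-\lambda_i^2)/(1+\lambda_i^2)\cdot\lambda_i^2/(1+\lambda_i^2)\cdot(\text{pair bookkeeping})$, and summing the pair $\{i,i'\}$ together yields exactly $(1-\lambda_i^2)^2/(1+\lambda_i^2)^2$ once per pair. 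Organizing one representative per pair as ``$k$ odd'' (which is the convention $i'=i+(-1)^{i+1}$ pairs $1\leftrightarrow2$, $3\leftrightarrow4$, etc.), this produces $\sum_{k\text{ odd}}(1-\lambda_k^2)^2/(1+\lambda_k^2)^2$, as claimed.

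The main obstacle is bookkeeping rather than conceptual: getting the combinatorial factors of the paired sum right, in particular confirming that after the reciprocal-pair cancellation of the ``$1$'' contribution the leftover ``$3g(Ja_i,a_k)^2$'' contribution does not itself cancel in pairs but instead reinforces to give the squared factor, and matching the overall normalization constant of the Fubini-Study curvature to the scalar-curvature convention used in Proposition~\ref{evolution} so that no stray multiplicative constant appears. I would double-check the computation in the two extreme cases $n=1$ (where it must reduce to the constant-curvature sphere computation behind Theorem~\ref{surface_case}) and the case where all $\lambda_i=1$ (where both sides must vanish), which pins down signs and constants.
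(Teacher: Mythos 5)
Your overall route is the same as the paper's: substitute the explicit Fubini--Study sectional curvature formula into the ambient curvature term of Proposition~\ref{evolution}, use $\tilde R_{ikik}=R_{ikik}$ (via $E$ intertwining $J$ and $\tilde J$), and exploit the reciprocal-pair structure of the singular values. But there is a genuine gap in the way you dispose of the ``generic'' (constant-$1$) part of the curvature formula.

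You write the bracketed term as $\sum_{i,k}\frac{\lambda_i^2(1-\lambda_k^2)}{(1+\lambda_k^2)(1+\lambda_i^2)}\bigl(1+3\,g(Ja_i,a_k)^2\bigr)$ and then factor the ``$1$''-contribution as $\bigl(\sum_i\frac{\lambda_i^2}{1+\lambda_i^2}\bigr)\bigl(\sum_k\frac{1-\lambda_k^2}{1+\lambda_k^2}\bigr)=0$. This factorization tacitly lets $i$ and $k$ range freely, including $i=k$. But the sectional curvature formula $R(u,v,u,v)=c\,(1+3\langle Ju,v\rangle^2)$ is only valid for an orthonormal \emph{pair} $u\ne v$; for $i=k$ one has $R_{iiii}=0$ by skew-symmetry, not $c$. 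So the correct ``generic'' part is the \emph{restricted} sum over $i\ne k$, and that restriction breaks the factorization. Once you split $i\ne k$ into $i\ne k,k'$ and $i=k'$, the $i\ne k,k'$ piece does vanish (this uses $\sum_{i\ne k,k'}\frac{\lambda_i^2}{1+\lambda_i^2}=n-1$ independent of $k$, together with $\sum_k\frac{1-\lambda_k^2}{1+\lambda_k^2}=0$ as you observed), but the $i=k'$ piece survives and equals $c\sum_k\frac{1-\lambda_k^2}{(1+\lambda_k^2)^2}$. In the paper's normalization $c=\tfrac14$, this is one quarter of the final expression, and the ``$3\delta_{ik'}$'' piece supplies the remaining three quarters; only their sum gives $\sum_{k\text{ odd}}\frac{(1-\lambda_k^2)^2}{(1+\lambda_k^2)^2}$. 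If instead you declare the generic part to be zero and tune the normalization constant to force a match, you would be using the wrong normalization for the Fubini--Study metric (one inconsistent with the sectional curvatures lying in $[\tfrac14,1]$), which is exactly the kind of constant-chasing you flagged as the ``main obstacle''. Your proposed sanity check at $n=1$ would in fact catch this, since there the $i=k'$ term is the whole of the restricted generic sum and its omission visibly changes the coefficient.
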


\begin{proof}
On $\cp^n$ with the Fubini-Study metric $\langle \cdot, \cdot\rangle$, the sectional curvature is (see for example \cite{ls}):
\[K(X,Y)=\frac{\frac{1}{4}(||X\wedge Y||^2+3\langle JX, Y \rangle^2)}{|X|^2|Y|^2-\langle X,Y\rangle^2}.\] Therefore, with respect to the chosen orthonormal bases of $T_xM$ and $T_{f(x)}\tilde{M}$, the sectional curvatures $K$ and $\tilde{K}$ of $M$ and $\tilde{M}$ are:
\[K(a_i,a_{i'})=1 \text{ and } K(a_r,a_s)=\frac{1}{4} \text{ for all other $r, s$}; \text{ and}\]
\[\tilde{K}(E(a_i),E(a_{i'}))=1 \text{ and } \tilde{K}(E(a_r),E(a_s))=\frac{1}{4} \text{ for all other $r, s$}.\]

Therefore,
\[R_{ikik}=K(a_i,a_k)=\frac{1}{4}(1+3\delta_{ik'})\] and
\[\tilde{R}_{ikik}=\tilde{K}(E(a_i),E(a_k))=\frac{1}{4}(1+3\delta_{ik'})\]
for any $i, k$ with $i\not=k$.

Plugging these into the expression for $\mathcal{B}$, we obtain
\begin{align*}
\mathcal{B}=&\frac{*\Omega}{4}\displaystyle\sum_{k}\displaystyle\sum_{i\neq k}\frac{\lambda_i^2(1-\lambda_k^2)}{(1+\lambda_k^2)(1+\lambda_{i}^2)}(1+3\delta_{ik'})\\
&=*\Omega\displaystyle\sum_{k}\frac{\lambda_{k'}(1-\lambda_k^2)}{(1+\lambda_k^2)(\lambda_k+\lambda_{k'})}+\frac{*\Omega}{4}\displaystyle\sum_{k}\frac{1-\lambda_k^2}{1+\lambda_k^2}\left(\displaystyle\sum_{i\neq k, k'}\frac{\lambda_i}{\lambda_i+\lambda_{i'}}\right)\\
\end{align*} by dividing it into two summands with $i=k'$ and $i=k'$.
Using $\lambda_k\lambda_{k'}=1$ and $\displaystyle\sum_{i\neq k, k'}\frac{\lambda_i}{\lambda_i+\lambda_{i'}}=\displaystyle\sum_{i \text{ odd } \neq k, k'}\frac{\lambda_i+\lambda_i'}{\lambda_i+\lambda_{i'}}=n-1$, we derive
\begin{align*}
\mathcal{B}=*\Omega\displaystyle\sum_{k}\frac{1-\lambda_k^2}{(1+\lambda_{k}^2)^2}+\frac{(n-1)}{4}*\Omega\displaystyle\sum_{k}\frac{1-\lambda_k^2}{1+\lambda_k^2}.
\end{align*}

The second term vanishes as sums with odd $k$ and even $k$ cancel with each other. Finally, we arrive at:
\begin{align*}
\mathcal{B}=*\Omega &\displaystyle\sum_{k}\frac{1-\lambda_{k}^2}{(1+\lambda_{k}^2)^2}=*\Omega\displaystyle\sum_{k\text{ odd}}\frac{(1-\lambda_k^2)^2}{(1+\lambda_k^2)^2}.
\end{align*}

\end{proof}

In this case $\mathcal{B}\geq 0$, with equality holding if and only if all the singular values of $f$ are equal (and thus necessarily equal to 1). Moreover, $\frac{(1-\lambda_k^2)^2}{(1+\lambda_k^2)^2}<1$, so $\mathcal{B}< n(*\Omega)\leq \frac{n}{2^n}$.

We notice that $Q(\lambda_i, h_{ijk})$ is a quadratic form in $h_{ijk}$ which can be rewritten as
\begin{equation}\begin{split} \label{quadform}
{Q}(\lambda_i, h_{ijk})=&\displaystyle\sum_{i,j,k}h_{ijk}^2-2\displaystyle\sum_{k}\displaystyle\sum_{i\text{ odd}}(h_{iik}h_{i'i'k}-h_{ii'k}^2)\\
&-2\displaystyle\sum_{k}\displaystyle\sum_{i \text{ odd}<j\text{ odd}}(\lambda_i-\lambda_{i'})(\lambda_j-\lambda_{j'})h_{i'ik}h_{j'jk}\\
&-2\displaystyle\sum_{k}\displaystyle\sum_{i\text{ odd}<j\text{ odd}}[-(\lambda_i\lambda_j+\lambda_{i'}\lambda_{j'})h_{i'jk}h_{j'ik}+(\lambda_{i'}\lambda_j+\lambda_i\lambda_{j'})h_{ijk}h_{j'i'k}].
\end{split}
\end{equation}

\begin{lem} \label{evalue}
When each $\lambda_i=1$, \[{Q}((1,\ldots,1), h_{ijk})\geq (3-\sqrt{5}) ||h_{ijk}||^2\] where \[||h_{ijk}||^2=\sum_{i} h_{iii}^2+\sum_{i\not= j} h_{ijj}^2+\sum_{i<j<k} h_{ijk}^2.\]
\end{lem}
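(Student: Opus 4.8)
The strategy is to reduce the inequality to eigenvalue estimates for a handful of small, explicit matrices. Setting every $\lambda_i = 1$ in \eqref{quadform} kills the line carrying the factors $(\lambda_i - \lambda_{i'})(\lambda_j - \lambda_{j'})$ and collapses both $\lambda_i\lambda_j + \lambda_{i'}\lambda_{j'}$ and $\lambda_{i'}\lambda_j + \lambda_i\lambda_{j'}$ to $2$, so that
\[{Q}((1,\dots,1),h) = \sum_{i,j,k} h_{ijk}^2 - 2\sum_k\sum_{i\ \mathrm{odd}}\big(h_{iik}h_{i'i'k} - h_{ii'k}^2\big) - 4\sum_k\sum_{\substack{i<j\\ i,j\ \mathrm{odd}}}\big(h_{ijk}h_{j'i'k} - h_{i'jk}h_{j'ik}\big).\]
The last slot $k$ occurs only as a label, so $Q = \sum_k Q_k$, where $Q_k$ is a quadratic form in the symmetric $2n\times 2n$ matrix $A^{(k)} = (h_{ijk})_{i,j}$.

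First I would compute $Q_k$ in the $J$-adapted basis of Proposition~\ref{basis}, in which $J$ is block-diagonal with blocks $\bigl(\begin{smallmatrix}0 & -1\\ 1 & 0\end{smallmatrix}\bigr)$ labelled by $a=1,\dots,n$. Writing $A^{(k)}$ in the corresponding $2\times 2$ block form and expanding, the ``mixed'' products $h_{(2a)(2a-1)k}\,h_{(2b)(2b-1)k}$ cancel in pairs, leaving a sum of squares:
\[Q_k = \sum_{a}\Big[\big(h_{(2a-1)(2a-1)k} - h_{(2a)(2a)k}\big)^2 + 4\,h_{(2a-1)(2a)k}^2\Big] + 2\sum_{a<b}\Big[\big(h_{(2a-1)(2b-1)k} - h_{(2a)(2b)k}\big)^2 + \big(h_{(2a-1)(2b)k} + h_{(2a)(2b-1)k}\big)^2\Big],\]
which is also $\tfrac12\|[A^{(k)},J]\|^2$, twice the squared norm of the $J$-anticommuting part of $A^{(k)}$. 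Thus $Q\ge 0$, but not termwise positive definite; positive definiteness of $Q$ will come from the fact that different $k$'s share variables.

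Next I would reassemble $Q = \sum_k Q_k$ and sort the monomials by the multi-index $\{i,j,k\}$. Those monomials whose multi-index lies in a single block $\{2a-1,2a\}$ — i.e. involving only the four variables supported on that block — occur only in the within-block-$a$ parts of $Q_{2a-1}$ and $Q_{2a}$, where they assemble into two copies of $q(x,y) = (x-y)^2 + 4y^2 = x^2 - 2xy + 5y^2$; the matrix $\bigl(\begin{smallmatrix}1 & -1\\ -1 & 5\end{smallmatrix}\bigr)$ has eigenvalues $3\pm\sqrt5$, and this single-block piece (the $n=1$ case) is exactly what forces the constant $3-\sqrt5$. Every remaining monomial meets two or three distinct blocks, and the corresponding part $Q'$ of $Q$ is, by the same bookkeeping, an orthogonal direct sum $\sum_{a<b}Q'_{\{a,b\}} + \sum_{a<b<c}Q'_{\{a,b,c\}}$: each $Q'_{\{a,b\}}$ (twelve variables) splits further — according to which block supplies the repeated index and by a sign pattern — into $3\times 3$ blocks with matrix $\bigl(\begin{smallmatrix}3 & -1 & -2\\ -1 & 3 & 2\\ -2 & 2 & 8\end{smallmatrix}\bigr)$, eigenvalues $2$ and $6\pm 2\sqrt3$; each $Q'_{\{a,b,c\}}$ (eight variables indexed by $\{0,1\}^3$) splits by parity into two copies of $2I + ww^{\top}$ with $w=(1,-1,-1,-1)$, eigenvalues $6,2,2,2$. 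Hence $Q' \ge 2\|h'\|^2$ on the multi-block variables, and since $3-\sqrt5 < 2$, adding the single-block contribution gives ${Q}((1,\dots,1),h)\ge(3-\sqrt5)\|h_{ijk}\|^2$.

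I expect the main obstacle to be Step~2 — verifying that $Q_k$ collapses to the displayed nonnegative sum of squares, i.e. that the cross-block products genuinely cancel — together with the index bookkeeping of Step~3: that the single-block variables decouple completely, and that $Q'$ breaks into the small pair- and triple-blocks described. The eigenvalue computations themselves are routine ($2\times2$, $3\times3$, and a rank-one perturbation of $2I$), and they show that the extremal value $3-\sqrt5$ is attained only inside a single $J$-block.
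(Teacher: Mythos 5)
Your argument reproduces the paper's appendix proof in its essential structure: decompose $Q((1,\dots,1),h)$ according to how many $J$-blocks the multi-index $\{i,j,k\}$ meets, and bound the least eigenvalue of each resulting small quadratic form, with the single-block $2\times2$ matrix $\bigl(\begin{smallmatrix}1&-1\\-1&5\end{smallmatrix}\bigr)$ (eigenvalues $3\pm\sqrt5$) and the two-block $3\times3$ matrix $\bigl(\begin{smallmatrix}3&-1&-2\\-1&3&2\\-2&2&8\end{smallmatrix}\bigr)$ (least eigenvalue $2$) agreeing with the paper's $\tilde{Q}_1$ and $\tilde{Q}_2$. The intermediate observation that $Q=\sum_k Q_k$ with $Q_k=\tfrac12\|[A^{(k)},J]\|^2$ for the symmetric slice $A^{(k)}=(h_{ijk})_{i,j}$ is a pleasant structural remark not made in the paper; it is correct and makes the nonnegativity of each fixed-$k$ piece manifest. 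One slip in the three-block piece: collecting the coefficient-$2$ cross-block squares from the three pairs $(a,b),(a,c),(b,c)$, each parity class carries the matrix $4I+2ww^{\top}$ with $w=(1,-1,-1,-1)^{\top}$ (diagonal $6$, off-diagonal $\pm2$, least eigenvalue $4$, matching the paper's bound $\tilde{Q}_3\ge 4\|\cdot\|^2$), not $2I+ww^{\top}$; you dropped an overall factor of $2$. Since $4>2>3-\sqrt5$ this does not affect the final estimate $Q\ge(3-\sqrt5)\|h_{ijk}\|^2$.
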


\begin{proof}
See Appendix.

\end{proof}

\begin{pro} \label{pinchconst} Let $Q(\lambda_i, h_{jkl})$ be the quadratic form defined in Proposition~\ref{evolution}.
In each dimension $n$, there exist $\Lambda_0>1$ such that $Q(\lambda_i, h_{jkl})$ is non-negative whenever $\frac{1}{{\Lambda_0}}\leq\lambda_i\leq{\Lambda_0}$ for $i=1,\ldots,2n$. Moreover, for any $1\leq \Lambda_1<\Lambda_0$, there exists a $\delta>0$
such that \[Q(\lambda_i, h_{jkl})\geq \delta \displaystyle\sum_{i,j,k}h_{ijk}^2\] whenever $\frac{1}{{\Lambda_1}}\leq\lambda_i\leq {\Lambda_1}$ for $i=1,\ldots,2n$.

\end{pro}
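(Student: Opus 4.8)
The plan is to regard $Q(\lambda_i,h_{jkl})$ as a family, parametrized by $(\lambda_1,\dots,\lambda_{2n})\in(0,\infty)^{2n}$, of quadratic forms on the finite–dimensional vector space $S$ of totally symmetric $3$-tensors $(h_{ijk})$, and to run a soft continuity-plus-compactness argument anchored at the balanced point $\lambda=(1,\dots,1)$, where strict positivity is already provided by Lemma~\ref{evalue}.

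Concretely, I would first observe from the explicit expression \eqref{A(lambda_i)} (equivalently \eqref{quadform}) that the coefficients of $Q(\lambda_i,\cdot)$, viewed as a quadratic form in the $h_{ijk}$, are polynomials in $\lambda_1,\dots,\lambda_{2n}$; here one just notes that each $\lambda_{i'}$ is literally one of the $\lambda_j$, so there is no hidden constraint coupling the parameters. Equip $S$ with the inner product $\|h\|^2:=\sum_{i,j,k}h_{ijk}^2$ and let $\mu(\lambda):=\min_{\|h\|=1}Q(\lambda_i,h_{jkl})$ be the smallest eigenvalue of the form $Q(\lambda_i,\cdot)$ with respect to this inner product. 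Representing $Q(\lambda_i,\cdot)$ by a symmetric matrix in an orthonormal basis of $(S,\|\cdot\|)$, the entries are polynomials in $\lambda$, and since the least eigenvalue of a symmetric matrix depends continuously (indeed Lipschitz) on its entries, $\mu$ is a continuous function on $(0,\infty)^{2n}$.

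Next I would evaluate at $\lambda=(1,\dots,1)$. By the total symmetry of $h$, the norm $\|h\|^2_{\mathrm{Lem}}=\sum_i h_{iii}^2+\sum_{i\neq j}h_{ijj}^2+\sum_{i<j<k}h_{ijk}^2$ in Lemma~\ref{evalue} satisfies $\tfrac16\|h\|^2\le\|h\|^2_{\mathrm{Lem}}\le\|h\|^2$, because in $\|h\|^2$ each all-distinct index pattern is counted $6$ times and each two-equal pattern $3$ times. Hence Lemma~\ref{evalue} gives $Q((1,\dots,1),h_{jkl})\ge(3-\sqrt5)\|h\|^2_{\mathrm{Lem}}\ge\tfrac{3-\sqrt5}{6}\|h\|^2$, so $\mu(1,\dots,1)\ge\tfrac{3-\sqrt5}{6}>0$. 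The set $W=\{\lambda:\mu(\lambda)>0\}$ is therefore an open neighborhood of $(1,\dots,1)$, so I may choose $\Lambda_0>1$ close enough to $1$ that the closed cube $[1/\Lambda_0,\Lambda_0]^{2n}$ lies in $W$; then $Q(\lambda_i,h_{jkl})\ge\mu(\lambda)\|h\|^2\ge0$ for all $\lambda$ in that cube. For the refinement, given $1\le\Lambda_1<\Lambda_0$ the cube $[1/\Lambda_1,\Lambda_1]^{2n}$ is a compact subset of $[1/\Lambda_0,\Lambda_0]^{2n}\subset W$, so the continuous function $\mu$ attains a positive minimum $\delta>0$ there, which yields $Q(\lambda_i,h_{jkl})\ge\delta\sum_{i,j,k}h_{ijk}^2$ throughout that range.

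There is essentially no analytic difficulty in this proposition: the real work has been front-loaded into Lemma~\ref{evalue} (proved in the Appendix), which supplies the strict positivity of the quadratic form at $\lambda=(1,\dots,1)$. The two points that require a little care are (i) confirming that $Q$ depends continuously—in fact polynomially—on all $2n$ variables $\lambda_i$ with no residual coupling between them, and (ii) the elementary but easy-to-mishandle combinatorial comparison between the norm $\|h\|^2_{\mathrm{Lem}}$ of Lemma~\ref{evalue} and the norm $\sum_{i,j,k}h_{ijk}^2$ used in the statement, so that Lemma~\ref{evalue} transfers cleanly. If one prefers to avoid invoking continuity of eigenvalues, one can instead view $(\lambda,h)\mapsto Q(\lambda_i,h_{jkl})$ as a single continuous function on the compact product $[1/\Lambda_0,\Lambda_0]^{2n}\times\{h\in S:\|h\|=1\}$ and minimize directly; the argument is identical.
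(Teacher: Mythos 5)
Your argument is correct and follows essentially the same route as the paper's own proof: anchor strict positivity of $Q$ at $\lambda=(1,\dots,1)$ via Lemma~\ref{evalue} together with the elementary comparison $\tfrac16\sum h_{ijk}^2\le\|h\|_{\mathrm{Lem}}^2\le\sum h_{ijk}^2$, then invoke continuity of the least eigenvalue in the $\lambda$-parameters, openness of positive definiteness, and compactness of the smaller pinching cube to extract $\Lambda_0$ and $\delta$. The only cosmetic difference is that the paper packages $\Lambda_0$ as the supremum of pinching radii for which the eigenvalue infimum stays positive, whereas you simply take any small enough $\Lambda_0$; both yield the claimed statement.
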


\begin{proof} Since $\frac{1}{6}\sum_{i,j,k}h_{ijk}^2\leq ||h_{ijk}||^2\leq \sum_{i,j,k}h_{ijk}^2$,
by Lemma~\ref{evalue}, \[Q((1,\cdots, 1), h_{ijk}) \geq \frac{3-\sqrt{5}}{6}\sum_{i, j, k}h_{ijk}^2.\]
Since being a positive definite matrix is an open condition, there is an open neighborhood $U$ of $(\lambda_1,\ldots, \lambda_{2n})=(1,\cdots, 1)$ such that $ (\lambda_1,\ldots, \lambda_{2n})\in U$ implies $Q(\lambda_i, h_{ijk})$ is positive definite. Let $\delta_{\vec{\lambda}}$ be the smallest eigenvalue of $Q$ at $\vec{\lambda}\equiv(\lambda_1,\ldots,\lambda_{2n})$. Note that $\delta_{\vec{\lambda}}$ is a continuous function in $\vec{\lambda}$ and set
\[\delta_\Lambda=\displaystyle\min \{ \delta_{\vec{\lambda}}\,|\, \vec{\lambda}=(\lambda_1,\ldots,\lambda_{2n}) \text{ and }\frac{1}{{\Lambda}}\leq\lambda_i\leq{\Lambda} \text{ for } i=1,\ldots,2n\}. \] $\Lambda_0$ defined by
  \[\Lambda_0\equiv\sup \{ \Lambda \,|\, \Lambda\geq 1 \text{ and } \delta_\Lambda > 0\}\] has the desired property.

\end{proof}

\begin{rem}
$\Lambda_0$ is computable in each dimension $n$. In particular, $\Lambda_0=\infty$ when $n=1$, and $\Lambda_0=\frac{2}{5}\sqrt{10}+\frac{1}{5}\sqrt{15}$ when $n=2$. This can be checked by dividing $Q$ into smaller quadratic forms and compute the eigenvalues as in the Appendix.
\end{rem}

\begin{cor}\label{pccpn}
 Under the same assumption as in Proposition \ref{evolution}, suppose in addition that $M$ and $\tilde{M}$ are both $\cp^n$ with the Fubini-Study metric. There exist constants $\Lambda_0>1$, depending only on $n$, such that for any $\Lambda_1$, $1\leq \Lambda_1<\Lambda_0$ there exists a $\delta>0$ with
\begin{equation} \label{pdineq}
\left(\frac{d}{dt}-\Delta \right)*\Omega \geq \delta*\Omega |{\textrm{II}}|^2 +*\Omega \sum_{k\text{ odd}}\frac{(1-\lambda_k^2)^2}{(1+\lambda_k^2)^2},
\end{equation} whenever $\frac{1}{{\Lambda_1}}\leq\lambda_i\leq{\Lambda_1}$ for every $i$. Here $|\textrm{II}|$ is the norm of the second fundamental form of $\Sigma_t$.
\end{cor}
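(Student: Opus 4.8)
The plan is to read off the statement directly from Corollary~\ref{cpn} and Proposition~\ref{pinchconst}, once $\sum_{i,j,k}h_{ijk}^2$ is identified with $|\textrm{II}|^2$. First I would recall from \S2.2 that, since each $\Sigma_t$ is the graph of a symplectomorphism, it is a Lagrangian submanifold of $(M\times\tilde{M},\mathcal{J})$; hence at every $q=(p,f_t(p))\in\Sigma_t$ the vectors $e_{2n+k}=\mathcal{J}e_k$, $k=1,\ldots,2n$, constructed in \eqref{e_2n+i} form an orthonormal basis of the normal space $N_q\Sigma_t$. Consequently the coefficients of the second fundamental form $\textrm{II}$ of $\Sigma_t$ with respect to the frames $(e_1,\ldots,e_{2n})$ and $(e_{2n+1},\ldots,e_{4n})$ are exactly the $h_{ijk}=G(\nabla^{M\times\tilde{M}}_{e_i}e_j,\mathcal{J}e_k)$ of \eqref{second_fund}, so that
\[
|\textrm{II}|^2=\sum_{i,j}|\textrm{II}(e_i,e_j)|^2=\sum_{i,j,k}h_{ijk}^2 .
\]

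Next, let $\Lambda_0>1$ be the constant produced by Proposition~\ref{pinchconst}, which depends only on $n$ (through Lemma~\ref{evalue}). Fix $\Lambda_1$ with $1\le\Lambda_1<\Lambda_0$ and let $\delta>0$ be the associated constant of that proposition, so that $Q(\lambda_i,h_{ijk})\ge\delta\sum_{i,j,k}h_{ijk}^2$ whenever $\frac{1}{\Lambda_1}\le\lambda_i\le\Lambda_1$ for all $i$. Substituting this lower bound into the evolution equation of Corollary~\ref{cpn} and using that $*\Omega>0$ (since $\Sigma_t$ is a graph), we obtain
\begin{align*}
\Bigl(\frac{d}{dt}-\Delta\Bigr)*\Omega
&=*\Omega\Bigl[Q(\lambda_i,h_{ijk})+\sum_{k\text{ odd}}\frac{(1-\lambda_k^2)^2}{(1+\lambda_k^2)^2}\Bigr]\\
&\ge\delta\,(*\Omega)\sum_{i,j,k}h_{ijk}^2+*\Omega\sum_{k\text{ odd}}\frac{(1-\lambda_k^2)^2}{(1+\lambda_k^2)^2},
\end{align*}
and replacing $\sum_{i,j,k}h_{ijk}^2$ by $|\textrm{II}|^2$ gives precisely \eqref{pdineq}.

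I do not expect any genuine obstacle here: the real content has already been isolated in the ambient-curvature computation of Corollary~\ref{cpn} and in the eigenvalue estimate of Proposition~\ref{pinchconst} (which rests on the Appendix proof of Lemma~\ref{evalue}). The only point requiring a little care is the identification $|\textrm{II}|^2=\sum_{i,j,k}h_{ijk}^2$: this is where the Lagrangian character of the graph enters, ensuring that $\{\mathcal{J}e_k\}_{k=1}^{2n}$ is a full orthonormal normal frame and that no normal components of the second fundamental form are omitted. One should also note that the curvature term $\sum_{k\text{ odd}}(1-\lambda_k^2)^2/(1+\lambda_k^2)^2$ is simply carried along unchanged and is $\ge 0$; its strict positivity away from $\lambda_i\equiv 1$ (recorded in the remark following Corollary~\ref{cpn}) will be needed for the convergence arguments later, but not for the present inequality.
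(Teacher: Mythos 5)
Your proof is correct and takes exactly the route the paper takes: the paper's own proof of this corollary is the one-line remark that it "follows from Corollary~\ref{cpn} and Proposition~\ref{pinchconst}," with the identification $|\textrm{II}|^2=\sum_{i,j,k}h_{ijk}^2$ (via the Lagrangian normal frame $\{\mathcal{J}e_k\}$ and \eqref{second_fund}) recorded in the text immediately following the corollary's statement. Your write-up simply makes the same two-step substitution explicit, which is exactly what the authors intended.
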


We recall the norm of the second fundamental form is
 \[\begin{split}|\textrm{II}|&=\sqrt{\displaystyle\sum_{i,j, k,l}G^{ik} G^{jl}G( \textrm{II}(w_i,w_j),\textrm{II}(w_k,w_l))}\\
 &=\sqrt{\displaystyle\sum_{i,j,k,l,r,s}G^{ik} G^{jl} G^{rs}G (\nabla^{M\times\tilde{M}}_{w_i}w_j, \mathcal{J} w_r ) G( \nabla^{M\times\tilde{M}}_{w_k}w_l, \mathcal{J} w_s )}\end{split}\] with respect to an arbitrary basis $w_1,\ldots,w_{2n}$ of $T_q\Sigma$ with $G_{ij}=G( w_i, w_j)$ and $G^{ij}=(G_{ij})^{-1}$. By (\ref{second_fund}), \[|\textrm{II}|=\sqrt{\displaystyle\sum_{i,j,k}h_{ijk}^2}\] for the chosen basis~\eqref{e_i}.

\begin{proof}
The result follows from Corollary~\ref{cpn} and Proposition~\ref{pinchconst}.
\end{proof}

\subsection{Preservation of graphical and pinching conditions}

Short-time existence of the mean curvature flow in question is guaranteed by general theory of quasilinear parabolic PDE. In order to establish long-time existence and convergence, we shall show that when an appropriate pinching holds initially, then $f$ remains $\Lambda_0$-pinched along the flow, $*\Omega$ satisfies the differential inequality~\eqref{pdineq} along the flow, and $\displaystyle\min_{\Sigma_t} *\Omega$ is non-decreasing in time. First we make several preliminary observations. We consider $\frac{1}{\sqrt{\displaystyle\prod_{i}(1+\lambda_i^2)}}$, for $\lambda_i>0$, $\lambda_i\lambda_{i'}=1$, where $i'=i+(-1)^{i+1}$, $i=1,\ldots,2n$ (in other words, $\lambda_{2k-1}\lambda_{2k}=1$ for $k=1,\ldots,n$). It can be rewritten as:
\[\frac{1}{\sqrt{\displaystyle\prod_{i}(1+\lambda_i^2)}}=\frac{1}{\displaystyle\prod_{i\,\, \text{odd}}(\lambda_i+\lambda_{i'})}.\]
This expression always has an upper bound: $\lambda_i\lambda_{i'}=1$ implies that $\lambda_i+\lambda_{i'}\geq 2$. Therefore,
\begin{equation}\label{upperbound}
\frac{1}{\sqrt{\displaystyle\prod_{i}(1+\lambda_i^2)}}\leq \frac{1}{2^n},
\end{equation}
with equality if and only if $\lambda_i=1$ for all $i$.

If $\lambda_i$'s are bounded, $\frac{1}{\sqrt{\displaystyle\prod_{i}(1+\lambda_i^2)}}$ also has a positive lower bound.

\begin{lem} \label{pinchbound}
If $\frac{1}{{\Lambda}}\leq \lambda_i\leq {\Lambda}$ for all $i$, where $\Lambda>1$, then:
\[\frac{1}{2^n}-\epsilon\leq\frac{1}{\sqrt{\displaystyle\prod_{i}(1+\lambda_i^2)}},\]
where $\epsilon=\frac{1}{2^n}-\frac{1}{({\Lambda}+\frac{1}{{\Lambda}})^n}>0$.
\end{lem}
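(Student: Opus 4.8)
\textbf{Proof proposal for Lemma~\ref{pinchbound}.} The plan is to reduce the bound to an elementary one-variable estimate using the same rewriting already exploited in~\eqref{upperbound}. Since the singular values come in reciprocal pairs $\lambda_i\lambda_{i'}=1$, one has $(1+\lambda_i^2)(1+\lambda_{i'}^2)=(1+\lambda_i^2)(1+\lambda_i^{-2})=(\lambda_i+\lambda_i^{-1})^2=(\lambda_i+\lambda_{i'})^2$, so that
\[
\frac{1}{\sqrt{\displaystyle\prod_i(1+\lambda_i^2)}}=\frac{1}{\displaystyle\prod_{i\,\,\text{odd}}(\lambda_i+\lambda_{i'})}.
\]
Thus it suffices to produce an upper bound for each factor $\lambda_i+\lambda_{i'}=\lambda_i+\lambda_i^{-1}$.

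First I would record the elementary fact that the function $\varphi(t)=t+t^{-1}$ is convex on $(0,\infty)$, strictly decreasing on $(0,1]$ and strictly increasing on $[1,\infty)$; hence on the interval $[\Lambda^{-1},\Lambda]$ its maximum is attained at the two endpoints, where it equals $\Lambda+\Lambda^{-1}$. Since $\Lambda^{-1}\leq\lambda_i\leq\Lambda$ and also $\Lambda^{-1}\leq\lambda_{i'}=\lambda_i^{-1}\leq\Lambda$, we get $\lambda_i+\lambda_{i'}\leq\Lambda+\Lambda^{-1}$ for every odd $i$. Taking the product over the $n$ odd indices and inverting gives
\[
\frac{1}{\sqrt{\displaystyle\prod_i(1+\lambda_i^2)}}=\frac{1}{\displaystyle\prod_{i\,\,\text{odd}}(\lambda_i+\lambda_{i'})}\geq\frac{1}{(\Lambda+\Lambda^{-1})^n}.
\]

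Finally I would note that the right-hand side is exactly $\frac{1}{2^n}-\epsilon$ with $\epsilon=\frac{1}{2^n}-\frac{1}{(\Lambda+\tfrac1\Lambda)^n}$, and that $\epsilon>0$ because $\Lambda>1$ forces $\Lambda+\Lambda^{-1}>2$ (again by strict convexity of $\varphi$, or by the AM--GM inequality with equality only at $\Lambda=1$), hence $(\Lambda+\Lambda^{-1})^n>2^n$. This completes the argument. There is no real obstacle here: the only content is the observation that the pairing $\lambda_i\lambda_{i'}=1$ collapses the product into factors of the form $\lambda_i+\lambda_i^{-1}$, after which the monotonicity/convexity of $t+t^{-1}$ on $[\Lambda^{-1},\Lambda]$ finishes it. The one point to state carefully is why the maximum of $t+t^{-1}$ on a symmetric-in-$\log$ interval $[\Lambda^{-1},\Lambda]$ sits at the endpoints, which is immediate from the shape of $\varphi$.
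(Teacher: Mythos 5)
Your argument is correct and is essentially the same as the paper's: both rewrite the product as $\prod_{i\text{ odd}}(\lambda_i+\lambda_{i'})$ using the reciprocal pairing, then bound each factor by $\Lambda+\Lambda^{-1}$ via the monotonicity of $t\mapsto t+t^{-1}$. You spell out slightly more carefully (via convexity and endpoint behavior on $[\Lambda^{-1},\Lambda]$) what the paper compresses into ``$x+\tfrac{1}{x}$ is increasing for $x>1$,'' but there is no substantive difference.
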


\begin{proof}
The function $x+\frac{1}{x}$ is increasing when $x>1$. Therefore if $\frac{1}{{\Lambda}}\leq \lambda_i\leq {\Lambda}$ for all $i$, then
\[\lambda_i+\lambda_{i'}\leq {\Lambda}+\frac{1}{{\Lambda}}.\]
It follows that
\[\frac{1}{2^n}-\epsilon\leq\frac{1}{\sqrt{\displaystyle\prod_{i}(1+\lambda_i^2)}}\leq\frac{1}{2^n},\]
where $\epsilon=\frac{1}{2^n}-\frac{1}{({\Lambda}+\frac{1}{{\Lambda}})^n}$.

\end{proof}

On the other hand, a positive lower bound on $\frac{1}{\sqrt{\displaystyle\prod_{i}(1+\lambda_i^2)}}$ implies a bound on each $\lambda_i$.

\begin{lem} \label{boundpinch}
If $\frac{1}{2^n}-\epsilon\leq\frac{1}{\sqrt{\displaystyle\prod_{i}(1+\lambda_i^2)}}$, where $0<\epsilon<\frac{1}{2^n}$, then:
\[\frac{1}{{\Lambda}}\leq \lambda_i\leq {\Lambda}\]
for all $i=1,\ldots,2n$, where $\Lambda=\frac{\frac{1}{2^n}}{\frac{1}{2^n}-\epsilon}+\sqrt{\left(\frac{\frac{1}{2^n}}{\frac{1}{2^n}-\epsilon}\right)^2-1}>1$.
\end{lem}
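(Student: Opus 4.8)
The plan is to isolate a single ``pair factor'' $\lambda_i+\lambda_{i'}$ in the product, bound it from above, and then solve a quadratic. First I would rewrite the hypothesis exactly as on the previous page, using $\lambda_i\lambda_{i'}=1$: since
\[
\frac{1}{\sqrt{\displaystyle\prod_i (1+\lambda_i^2)}}=\frac{1}{\displaystyle\prod_{i\text{ odd}}(\lambda_i+\lambda_{i'})},
\]
the assumption $\frac{1}{2^n}-\epsilon\le\frac{1}{\sqrt{\prod_i(1+\lambda_i^2)}}$ is equivalent to
\[
\prod_{i\text{ odd}}(\lambda_i+\lambda_{i'})\le\frac{1}{\frac{1}{2^n}-\epsilon},
\]
the right-hand side being positive precisely because $0<\epsilon<\frac{1}{2^n}$.

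Next, fix an index $i$ and look at the pair factor $\lambda_i+\lambda_{i'}$ (if $i$ is even this is the factor indexed by the odd index $i'$). Every one of the $n-1$ remaining pair factors satisfies $\lambda_j+\lambda_{j'}\ge 2$, because $\lambda_j\lambda_{j'}=1$; hence their product is at least $2^{n-1}$. Dividing the displayed inequality by this product gives
\[
\lambda_i+\lambda_{i'}\le\frac{1}{2^{n-1}}\cdot\frac{1}{\frac{1}{2^n}-\epsilon}=\frac{2\cdot\frac{1}{2^n}}{\frac{1}{2^n}-\epsilon}=2\mu,\qquad\mu:=\frac{\frac{1}{2^n}}{\frac{1}{2^n}-\epsilon}.
\]
Note $\mu>1$ since $\epsilon>0$.

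Finally, because $\lambda_{i'}=1/\lambda_i$, the last inequality reads $\lambda_i+\lambda_i^{-1}\le 2\mu$, i.e. $\lambda_i^2-2\mu\lambda_i+1\le 0$, whose roots are the real numbers $\mu\pm\sqrt{\mu^2-1}$. Therefore
\[
\mu-\sqrt{\mu^2-1}\le\lambda_i\le\mu+\sqrt{\mu^2-1},
\]
and since $(\mu-\sqrt{\mu^2-1})(\mu+\sqrt{\mu^2-1})=1$ this is exactly $\frac{1}{\Lambda}\le\lambda_i\le\Lambda$ with $\Lambda=\mu+\sqrt{\mu^2-1}=\frac{\frac{1}{2^n}}{\frac{1}{2^n}-\epsilon}+\sqrt{\bigl(\frac{\frac{1}{2^n}}{\frac{1}{2^n}-\epsilon}\bigr)^2-1}>1$. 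The argument is entirely elementary; there is no real obstacle, and the only points requiring care are that the bound on a single pair factor genuinely uses $0<\epsilon<\frac{1}{2^n}$ (so that the bound is positive and $\mu>1$), and that the chosen index $i$ was arbitrary, so the conclusion holds for all $i=1,\ldots,2n$.
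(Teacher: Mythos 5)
Your proof is correct and follows essentially the same route as the paper's: rewrite the product as $\prod_{i\text{ odd}}(\lambda_i+\lambda_{i'})$, bound the $n-1$ other pair factors below by $2$ to isolate $\lambda_i+\lambda_{i'}\le 2\mu$, then solve the resulting quadratic in $\lambda_i$ using $\lambda_i\lambda_{i'}=1$. The only difference is that you spell out the final quadratic step, which the paper leaves implicit.
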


\begin{proof}
If
\[\frac{1}{2^n}-\epsilon\leq\frac{1}{\sqrt{\displaystyle\prod_{i}(1+\lambda_i^2)}}=\frac{1}{\displaystyle\prod_{i \text{ odd}}(\lambda_i+\lambda_{i'})},\]
then
\[\displaystyle\prod_{i \text{ odd}}(\lambda_i+\lambda_{i'})\leq \frac{2^n}{1-2^n\epsilon}\] and
\[\lambda_i+\lambda_{i'}\leq \frac{2^n}{(1-2^n\epsilon)\displaystyle\prod_{j\neq i, j \text{ odd}}(\lambda_j+\lambda_{j'})}\]
for each $i$.

Since $\lambda_j+\lambda_{j'}\geq 2$ for each $j$, the inequality implies
\[\lambda_i+\lambda_{i'}\leq 2\frac{\frac{1}{2^n}}{\frac{1}{2^n}-\epsilon}\]
Since $\lambda_i\lambda_{i'}=1$, it follows that:
\[\frac{1}{{\Lambda}} \leq\lambda_i\leq {\Lambda}\]
where $\Lambda=\frac{\frac{1}{2^n}}{\frac{1}{2^n}-\epsilon}+\sqrt{\left(\frac{\frac{1}{2^n}}{\frac{1}{2^n}-\epsilon}\right)^2-1}$.

\end{proof}

After these algebraic preliminaries, we return to the mean curvature flow. Recall that $f$ is $\Lambda$-pinched in the sense of Definition \ref{pinching}, if $\frac{1}{\Lambda}\leq \lambda_i\leq \Lambda$ at each point $p\in M$ in which $\lambda_i$'s are the singular values of $Df_p$ as in section 2.2.

\begin{pro}\label{lbpreserved} 
Let $\Sigma_t$ be the mean curvature flow of the graph $\Sigma$ of a symplectomorphism $f:M\rightarrow \tilde{M}$ where $M=\tilde{M}=\cp^n$ with the Fubini-Study metric. Suppose $\Sigma_t$ exists smoothly on $[0, T)$ for some $T>0$. Let $*\Omega$ be the Jacobian of the projection $\pi_1:\Sigma_t\rightarrow M$.
Let $\Lambda_0$ be the constants characterized by Proposition~\ref{pinchconst}.

If $*\Omega$ has the initial lower bound:
\[\frac{1}{2^n}-\epsilon\leq *\Omega \]
for $\epsilon=\frac{1}{2^n}\left(1-\frac{2}{{\Lambda'}+\frac{1}{{\Lambda'}}}\right)$ for some $1<\Lambda'<\Lambda_0$, then $\displaystyle\min_{\Sigma_t}*\Omega$ is nondecreasing as a function in $t$. In particular, $\Sigma_t$ is the graph of a symplectomorphism $f_t:M\rightarrow \tilde{M}$.
\end{pro}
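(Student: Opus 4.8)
The plan is to run a parabolic maximum principle argument on $*\Omega$ along the flow, using the differential inequality from Corollary~\ref{pccpn}. The first step is to translate the hypothesis on the initial lower bound into a pinching bound: by Lemma~\ref{boundpinch}, the assumption $\frac{1}{2^n}-\epsilon\leq *\Omega$ with $\epsilon=\frac{1}{2^n}\bigl(1-\frac{2}{\Lambda'+1/\Lambda'}\bigr)$ is equivalent (after simplifying the expression for $\Lambda$ in that lemma) to $\frac{1}{\Lambda'}\leq\lambda_i\leq\Lambda'$ for all $i$ at time $0$; in particular, since $\Lambda'<\Lambda_0$, at $t=0$ we are in the regime where Corollary~\ref{pccpn} applies. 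I would then set up a continuity/bootstrap argument: let $t_0\in[0,T)$ be the supremum of times $t$ such that the pinching $\frac{1}{\Lambda'}\leq\lambda_i\leq\Lambda'$ holds on $\Sigma_s$ for all $s\in[0,t]$. On $[0,t_0]$, Corollary~\ref{pccpn} gives
\[
\left(\frac{d}{dt}-\Delta\right)*\Omega\;\geq\;\delta\,*\Omega\,|\mathrm{II}|^2+*\Omega\sum_{k\text{ odd}}\frac{(1-\lambda_k^2)^2}{(1+\lambda_k^2)^2}\;\geq\;0,
\]
so by the parabolic maximum principle applied on the compact manifold $\Sigma_t$, $\min_{\Sigma_t}*\Omega$ is nondecreasing on $[0,t_0]$.

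The key point is that this monotonicity is self-improving and closes the bootstrap. Since $\min_{\Sigma_t}*\Omega\geq\min_{\Sigma_0}*\Omega\geq\frac{1}{2^n}-\epsilon$ for all $t\leq t_0$, Lemma~\ref{boundpinch} again tells us that $\frac{1}{\Lambda'}\leq\lambda_i\leq\Lambda'$ continues to hold on $\Sigma_{t_0}$ — indeed with no loss, the same $\Lambda'$. If $t_0<T$, then by continuity of the singular values along the smooth flow, the strict inequality regime persists slightly past $t_0$, contradicting maximality; hence $t_0=T$, and $\min_{\Sigma_t}*\Omega$ is nondecreasing on all of $[0,T)$. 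Finally, since $*\Omega>0$ everywhere on each $\Sigma_t$ (being bounded below by $\frac{1}{2^n}-\epsilon>0$), the projection $\pi_1:\Sigma_t\to M$ has everywhere-invertible differential; since $\Sigma_t$ and $M$ are compact of the same dimension, $\pi_1$ is a covering map, and as $M=\cp^n$ is simply connected (for $n\geq1$) it is a diffeomorphism, so $\Sigma_t$ is the graph of a diffeomorphism $f_t$. That $f_t$ is a symplectomorphism follows because $\Sigma_t$ remains Lagrangian along the mean curvature flow in the K\"ahler--Einstein product (Smoczyk), which for a graph is exactly the condition $f_t^*\omega=\omega$.

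The main obstacle is making the bootstrap rigorous: one must verify that the pinching constant does not need to be enlarged at $t_0$, i.e.\ that the equivalence in Lemmas~\ref{pinchbound} and~\ref{boundpinch} between the $*\Omega$ lower bound and the $\Lambda'$-pinching is tight enough to feed back the \emph{same} $\Lambda'$ rather than a slightly worse one. This is exactly why $\epsilon$ is chosen to be $\frac{1}{2^n}\bigl(1-\frac{2}{\Lambda'+1/\Lambda'}\bigr)$: one checks that $\frac{1/2^n}{1/2^n-\epsilon}=\frac{1}{2}\bigl(\Lambda'+\frac{1}{\Lambda'}\bigr)$, and then the $\Lambda$ produced by Lemma~\ref{boundpinch} is precisely $\frac12(\Lambda'+1/\Lambda')+\sqrt{\frac14(\Lambda'+1/\Lambda')^2-1}=\Lambda'$, so the regime is preserved verbatim. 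A secondary technical point is ensuring that Corollary~\ref{pccpn} is applicable pointwise on $\Sigma_t$ even where the minimum of $*\Omega$ is not attained — but the differential inequality there is pointwise and only uses $\frac{1}{\Lambda'}\leq\lambda_i\leq\Lambda'$, which holds everywhere once the $*\Omega$ lower bound does, so there is no issue.
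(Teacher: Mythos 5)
Your proof is correct and follows essentially the same argument as the paper: the initial $*\Omega$ lower bound is converted to $\Lambda'$-pinching via Lemma~\ref{boundpinch} (the identity $\frac{1/2^n}{1/2^n-\epsilon}=\tfrac12(\Lambda'+1/\Lambda')$ being exactly what makes the bootstrap feed back the \emph{same} $\Lambda'$), the nonnegative right-hand side of inequality~\eqref{pdineq} from Corollary~\ref{pccpn} is then invoked, and a continuity argument combined with the parabolic maximum principle closes the loop. Your final paragraph, justifying that $\Sigma_t$ is the graph of a symplectomorphism via the covering-map argument on simply connected $\cp^n$ together with Smoczyk's preservation of the Lagrangian condition, supplies detail that the paper leaves implicit.
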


\begin{proof}
 If initially $\frac{1}{2^n}-\epsilon\leq *\Omega$ for $\epsilon=\frac{1}{2^n}\left(1-\frac{2}{{\Lambda'}+\frac{1}{{\Lambda'}}}\right)$. We compute that
 $\frac{\frac{1}{2^n}}{\frac{1}{2^n}-\epsilon}=\frac{\Lambda'+\frac{1}{\Lambda'}}{2}$. Thus, by Lemma~\ref{boundpinch}, $f$ is $\Lambda'$-pinched. That in turn implies that $*\Omega$ initially satisfies inequality~\eqref{pdineq}, and in particular,
\begin{equation}\label{nonneg}
\left(\frac{d}{dt}-\Delta \right)*\Omega \geq *\Omega \sum_{k \,\,odd}\frac{(1-\lambda_k^2)^2}{(1+\lambda_k^2)}.
\end{equation}
Thus $*\Omega>\frac{1}{2^n}-\epsilon$ for some $[0, T')$ with $T'<T$.

Suppose at $T'$, $*\Omega=\frac{1}{2^n}-\epsilon$ for the first time after $t=0$. But in $[0, T')$, we have $*\Omega>\frac{1}{2^n}-\epsilon$ and thus $f$ is $\Lambda'$-pinched and inequality \eqref{nonneg} is satisfied again. Since the right hand side of \eqref{nonneg} is strictly positive unless $*\Omega=\frac{1}{2^n}$, $\min_{\Sigma_t} *\Omega$ is non-decreasing in time by the maximum principle.

\end{proof}

\begin{cor}\label{cor} Under the same assumption as in Proposition \ref{lbpreserved},
if the initial symplectomorphism $f$ is $\Lambda_1$-pinched, for \[\Lambda_1=\left[\frac{1}{2}\left({\Lambda_0}+\frac{1}{{\Lambda_0}}\right)\right]^{\frac{1}{n}}+\sqrt{\left[\frac{1}{2}\left({\Lambda_0}+\frac{1}{{\Lambda_0}}\right)\right]^{\frac{2}{n}}-1}<\Lambda_0,\] then each $f_t$ is  $\Lambda_0$-pinched along the mean curvature flow.
\end{cor}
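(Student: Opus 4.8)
The plan is: convert the $\Lambda_1$-pinching of $f$ into a lower bound for $*\Omega$ at time $0$ by Lemma~\ref{pinchbound}; observe that the particular value of $\Lambda_1$ in the statement is calibrated so that this bound is exactly the one Proposition~\ref{lbpreserved} needs at the endpoint $\Lambda'=\Lambda_0$; propagate the bound forward in time by the maximum-principle argument of Proposition~\ref{lbpreserved}; and then read $\Lambda_0$-pinching of each $f_t$ back off the bound by Lemma~\ref{boundpinch}.

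The algebraic heart is a one-line identity. Put $\mu=\bigl[\tfrac12(\Lambda_0+\tfrac1{\Lambda_0})\bigr]^{1/n}$, so that the formula in the statement reads $\Lambda_1=\mu+\sqrt{\mu^2-1}$; then $\tfrac1{\Lambda_1}=\mu-\sqrt{\mu^2-1}$, hence $\Lambda_1+\tfrac1{\Lambda_1}=2\mu$ and $\bigl(\Lambda_1+\tfrac1{\Lambda_1}\bigr)^n=2^{n-1}\bigl(\Lambda_0+\tfrac1{\Lambda_0}\bigr)$. Thus the constant $\epsilon$ produced by Lemma~\ref{pinchbound} with $\Lambda=\Lambda_1$ is
\[\epsilon_1:=\frac1{2^n}-\frac1{\bigl(\Lambda_1+\tfrac1{\Lambda_1}\bigr)^n}=\frac1{2^n}-\frac1{2^{n-1}\bigl(\Lambda_0+\tfrac1{\Lambda_0}\bigr)}=\frac1{2^n}\Bigl(1-\frac2{\Lambda_0+\tfrac1{\Lambda_0}}\Bigr),\]
which is precisely the number $\epsilon$ in Proposition~\ref{lbpreserved} for $\Lambda'=\Lambda_0$; and since $2^{-n}/(2^{-n}-\epsilon_1)=\tfrac12(\Lambda_0+\tfrac1{\Lambda_0})$, Lemma~\ref{boundpinch} applied with $\epsilon=\epsilon_1$ returns exactly the constant $\Lambda_0$. (The asserted inequality $\Lambda_1<\Lambda_0$ also follows: for $n\ge2$, $\mu<\tfrac12(\Lambda_0+\tfrac1{\Lambda_0})$ and $x\mapsto x+\sqrt{x^2-1}$ is increasing, while $\Lambda_0=\infty$ when $n=1$.)

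With this in hand, $f$ being $\Lambda_1$-pinched gives $*\Omega\ge\frac1{2^n}-\epsilon_1$ at $t=0$. I would then rerun the bootstrap in the proof of Proposition~\ref{lbpreserved} at $\Lambda'=\Lambda_0$. By Lemma~\ref{boundpinch}, the inequality $*\Omega\ge\frac1{2^n}-\epsilon_1$ holding on a time interval is equivalent to $f_t$ being $\Lambda_0$-pinched there; and whenever $f_t$ is $\Lambda_0$-pinched, Proposition~\ref{pinchconst} gives $Q(\lambda_i,h_{ijk})\ge0$ while Corollary~\ref{cpn} identifies the remaining curvature term as $*\Omega\sum_{k\text{ odd}}\frac{(1-\lambda_k^2)^2}{(1+\lambda_k^2)^2}\ge0$, so that
\[\Bigl(\frac{d}{dt}-\Delta\Bigr)*\Omega\ \ge\ *\Omega\sum_{k\text{ odd}}\frac{(1-\lambda_k^2)^2}{(1+\lambda_k^2)^2}\ \ge\ 0.\]
The maximum principle then shows $\min_{\Sigma_t}*\Omega$ is nondecreasing on $[0,T)$, so $*\Omega\ge\frac1{2^n}-\epsilon_1$ throughout; in particular $*\Omega>0$, so each $\Sigma_t$ is a graph and $f_t$ is defined, and a final application of Lemma~\ref{boundpinch} shows every $f_t$ is $\Lambda_0$-pinched.

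The only real obstacle is this endpoint phenomenon. Proposition~\ref{lbpreserved} is stated for $\Lambda'$ strictly inside the admissible pinching range, whereas $\Lambda_1$ is tuned so that its $*\Omega$-bound lands exactly at $\Lambda'=\Lambda_0$, where Proposition~\ref{pinchconst} provides only $Q\ge0$ rather than positive-definiteness, and the sharper inequality of Corollary~\ref{pccpn} (with its $\delta\,|\mathrm{II}|^2$ term) is unavailable. As the display above shows this is harmless, since the maximum principle needs only a differential inequality with non-negative right-hand side; alternatively, in the generic case $*\Omega(0)>\frac1{2^n}-\epsilon_1$ one can simply pick $\Lambda'$ slightly below $\Lambda_0$ and cite Proposition~\ref{lbpreserved} verbatim, handling the degenerate case by continuity. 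Everything else is bookkeeping with the two conversion lemmas and the identity for $\Lambda_1$.
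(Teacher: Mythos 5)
Your proposal follows the paper's route exactly: the same algebraic identity $\tfrac12(\Lambda_1+\Lambda_1^{-1})=\bigl[\tfrac12(\Lambda_0+\Lambda_0^{-1})\bigr]^{1/n}$, the same two conversion lemmas (Lemma~\ref{pinchbound} forward to a bound on $*\Omega$, Lemma~\ref{boundpinch} backward to a pinching constant), and the same appeal to preservation of the lower bound along the flow. The one point where you go beyond the paper is worth highlighting: the paper simply cites Proposition~\ref{lbpreserved} with the $\epsilon$ of~\eqref{ep}, which (as you compute) corresponds to $\Lambda'=\Lambda_0$, whereas Proposition~\ref{lbpreserved} is stated only for $1<\Lambda'<\Lambda_0$, and its proof invokes Corollary~\ref{pccpn}, which likewise requires a pinching constant strictly below $\Lambda_0$. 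You correctly observe that this endpoint is harmless: the maximum-principle step needs only $Q\ge0$ together with $\mathcal{B}\ge0$, and Proposition~\ref{pinchconst} gives $Q\ge0$ on the \emph{closed} box $\tfrac1{\Lambda_0}\le\lambda_i\le\Lambda_0$; the quantitative $\delta|\mathrm{II}|^2$ term of Corollary~\ref{pccpn} is irrelevant here. So your argument is correct and slightly tighter in its bookkeeping than the printed proof. (Minor caveat, shared with the paper: the inequality $\Lambda_1<\Lambda_0$ and the monotonicity argument for it are genuine only for $n\ge2$; for $n=1$ one has $\Lambda_1=\Lambda_0=\infty$ and the corollary is vacuous.)
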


\begin{proof} The proof consists of only algebraic manipulation and there is no need to apply the maximum principle again. We need a simple algebraic formula which can be easily verified: for $x>1,y>1$, \begin{equation}\label{identity}x+\sqrt{x^2-1}=y\,\,\text{if and only if} \,\,x=\frac{y+y^{-1}}{2}.\end{equation}

By the definition of $\Lambda_1$,
\begin{equation}\label{0_1_comp}\frac{1}{2}\left({\Lambda_1}+\frac{1}{{\Lambda_1}}\right)=\left(\frac{1}{2}\left({\Lambda_0}
+\frac{1}{{\Lambda_0}}\right)\right)^\frac{1}{n}\end{equation}
which is less than $\frac{1}{2}\left({\Lambda_0}+\frac{1}{{\Lambda_0}}\right)$ because ${\Lambda_0}+\frac{1}{{\Lambda_0}}>2$.
Since $\Lambda_0>1$ and $\Lambda_1>1$, it follows that $\Lambda_1<\Lambda_0$.

Now suppose $f$ is initially $\Lambda_1$-pinched, by Lemma~\ref{pinchbound}, $*\Omega$ has initial lower bound:
\[\frac{1}{2^n}-\epsilon\leq *\Omega \]
for \begin{equation}\label{ep}\epsilon=\frac{1}{2^n}-\frac{1}{({\Lambda_1}+\frac{1}{{\Lambda_1}})^n}.\end{equation} Then, by Proposition~\ref{lbpreserved}, the lower bound of $*\Omega$ remains true along the flow. Lemma~\ref{boundpinch} then implies that $f$ is $\Lambda'$-pinched along the flow for
\begin{equation}\label{lambda_p}\Lambda'=\frac{\frac{1}{2^n}}{\frac{1}{2^n}-\epsilon}+\sqrt{\left(\frac{\frac{1}{2^n}}
{\frac{1}{2^n}-\epsilon}\right)^2-1}.\end{equation} We claim that with the given $\Lambda_1$ and  $\epsilon$ given by \eqref{ep}, $\Lambda'$ is exactly $\Lambda_0$. In fact, from \eqref{lambda_p} and \eqref{identity}, we obtain

\[\frac{\frac{1}{2^n}}{\frac{1}{2^n}-\epsilon}=\frac{1}{2}\left({\Lambda'}+\frac{1}{\Lambda'}\right).\] On the other hand from \eqref{ep}, we solve $\frac{\frac{1}{2^n}}{\frac{1}{2^n}-\epsilon}=\left(\frac{1}{2}\left({\Lambda_1}
+\frac{1}{{\Lambda_1}}\right)\right)^{n}=\frac{1}{2}\left({\Lambda_0}
+\frac{1}{{\Lambda_0}}\right)$ by \eqref{0_1_comp}. Therefore $f$ is $\Lambda_0$ pinched along the flow.

\end{proof}

We believed that the constant $\Lambda_1$ can be further improved by considering the evolution equation of $\lambda_i$ directly. In this article, we find that the evolution equation of $*\Omega$ is sufficient to yield the desired constant, albeit not an optimal one.

In Theorem 1, we choose a $\Lambda$ that is slightly less than $\Lambda_1$ in Corollary \ref{cor}, then $f_t$ will $\Lambda_0'$ pinched along the flow for some $\Lambda_0'< \Lambda_0$ and thus by Corollary \ref{pccpn}, we have \eqref{pdineq} all the way along the flow. We shall see that this is enough for the long time existence and convergence.

\subsection{Long-time existence of the mean curvature flow}

We assume $M=\tilde{M}=\mathbb{CP}^n$. To prove long-time existence of the flow, we follow the method in \cite{wa3}. We isometrically embed $M\times\tilde{M}$ into $\R^N$. The mean curvature flow equation in terms of the coordinate function $F(x,t)\in \R^N$ is:
\[\frac{d}{dt}F(x,t)=H=\bar{H}+V,\]
where $H\in T(M\times\tilde{M})/T\Sigma_t$ is the mean curvature vector of $\Sigma_t$ in $M$, $\bar{H}\in T\R^N/T\Sigma_t$  is the mean curvature vector of $\Sigma_t$ in $\R^N$, and $V=-\sum_{a} \textrm{II}_{M\times\tilde{M}}(e_a,e_a)$ where $\{e_a\}_{a=1\cdots 2n} $ is an orthonormal basis of $T\Sigma_t$.
In the following calculation, the index $a$ is summed from $1$ to $2n$, \begin{align*}
H=\pi^{M\times\tilde{M}}_{N\Sigma}(\nabla^{M\times\tilde{M}}_{e_a} e_a)&=\nabla^{M\times\tilde{M}}_{e_a} e_a-\nabla^\Sigma_{e_a} e_a\\
&=\nabla^{\R^N}_{e_a} e_a-\pi^{\R^N}_{N(M\times\tilde{M})}(\nabla^{\R^N}_{e_a} e_a)-\nabla^\Sigma_{e_a} e_a\\
&=\nabla^{\R^N}_{e_a} e_a-\nabla^\Sigma_{e_a} e_a+V\\
&=\pi^{\R^N}_{N\Sigma}(\nabla^\Sigma_{e_a} e_a)+V\\
&=\bar{H}+V.
\end{align*}
Note that $V$ is bounded since both $M$ and $\tilde{M}$ are compact.

\vspace{5mm}
Following \cite{wa3}, we assume that there is a singularity at space time point $(y_0,t_0)\in \R^N \times \R$. Consider the backward heat kernel of Huisken $\rho_{y_0,t_0}$ at $(y_0,t_0)$:
\[\rho_{y_0,t_0}(y,t)=\frac{1}{4\pi(t_0-t)^n}\exp \left(\frac{-|y-y_0|^2}{4(t_0-t)}\right).\]

Let $d\mu_t$ denote the volume form of $\Sigma_t$. By Huisken's monotonicity formula \cite{hu2},  $\displaystyle\lim_{t\rightarrow t_0}\int \rho_{y_0,t_0}d\mu_t$ exists.

\begin{lem}\label{limitandineq}
The limit $\displaystyle\lim_{t\rightarrow t_0}\int (1-*\Omega)\rho_{y_0,t_0}d\mu_t$ exists and:
\[
\frac{d}{dt}\int (1-*\Omega)\rho_{y_0,t_0}d\mu_t\leq C-\delta\int*\Omega|\textrm{II}|^2\rho_{y_0,t_0}d\mu_t
\]
for some constant $C>0$.
\end{lem}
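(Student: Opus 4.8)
The plan is to combine Huisken's monotonicity formula with the differential inequality~\eqref{pdineq} for $*\Omega$ established in Corollary~\ref{pccpn}. First I would recall the standard fact (as in \cite{wa3,hu2}) that the backward heat kernel $\rho_{y_0,t_0}$ satisfies the conjugate heat equation along the flow, which yields the identity
\[
\frac{d}{dt}\int \varphi\,\rho_{y_0,t_0}\,d\mu_t = \int\Big(\frac{d}{dt}\varphi - \Delta\varphi\Big)\rho_{y_0,t_0}\,d\mu_t - \int \varphi\,\Big|H - \frac{(y-y_0)^\perp}{2(t_0-t)}\Big|^2\rho_{y_0,t_0}\,d\mu_t
\]
for a function $\varphi$ on $\Sigma_t$ (after an integration by parts moving $\Delta$ onto $\rho$ and using the monotonicity identity). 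Applying this with $\varphi = 1-*\Omega$, the drift term has a favorable sign, and $\frac{d}{dt}(1-*\Omega) - \Delta(1-*\Omega) = -\big(\frac{d}{dt}*\Omega - \Delta *\Omega\big) \leq -\delta*\Omega|\textrm{II}|^2 - *\Omega\sum_{k\text{ odd}}\frac{(1-\lambda_k^2)^2}{(1+\lambda_k^2)^2} \leq -\delta*\Omega|\textrm{II}|^2$ by Corollary~\ref{pccpn} (valid since the pinching $\frac{1}{\Lambda_0'}\le\lambda_i\le\Lambda_0'$ is preserved along the flow by Corollary~\ref{cor}). This gives the differential inequality with $C$ coming from a harmless term; in fact one can even take $C=0$ here, but since $1-*\Omega$ is bounded between $0$ and $1$ and we only need an upper bound, keeping a constant $C\ge 0$ is safe and convenient.

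Next I would address the existence of the limit $\lim_{t\to t_0}\int(1-*\Omega)\rho_{y_0,t_0}\,d\mu_t$. Since Huisken's monotonicity formula guarantees $\lim_{t\to t_0}\int\rho_{y_0,t_0}\,d\mu_t$ exists (and is finite), it suffices to show $\lim_{t\to t_0}\int *\Omega\,\rho_{y_0,t_0}\,d\mu_t$ exists. The differential inequality just derived shows that $\int(1-*\Omega)\rho_{y_0,t_0}\,d\mu_t$ is, up to the bounded error term $\int_0^t C\,ds = Ct$, nonincreasing in $t$ — more precisely $t\mapsto \int(1-*\Omega)\rho_{y_0,t_0}\,d\mu_t - Ct$ is monotone nonincreasing. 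Being monotone and bounded below (the integrand $(1-*\Omega)$ is nonnegative and $\int\rho\,d\mu_t$ has a finite limit, so the integral stays bounded), it has a limit as $t\to t_0$; hence so does $\int(1-*\Omega)\rho_{y_0,t_0}\,d\mu_t$ itself.

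The main technical point — and the place I would be most careful — is the justification of the integration by parts and the differentiation under the integral sign near the singular time $t_0$: one must ensure all the boundary/decay terms from the Gaussian weight are controlled and that $*\Omega$, which solves a parabolic equation with bounded coefficients on $[0,t_0)$, is regular enough on each slice $\Sigma_t$ with $t<t_0$ for these manipulations (they are performed on $[0,t_0)$ where the flow is still smooth, so this is routine once one invokes the smoothness of the flow before the singular time). The remaining computation reducing $\frac{d}{dt}(1-*\Omega)-\Delta(1-*\Omega)$ to $-\delta*\Omega|\textrm{II}|^2$ plus a sign-definite term is immediate from Corollary~\ref{pccpn}, so no extra work is needed there. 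I would present the argument by first stating the weighted monotonicity identity, then plugging in $\varphi = 1-*\Omega$, then invoking preservation of pinching and Corollary~\ref{pccpn} to conclude, and finally deducing existence of the limit from the resulting monotonicity.
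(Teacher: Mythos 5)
Your overall strategy matches the paper's: combine a Huisken-type monotonicity computation with the parabolic inequality for $*\Omega$ from Corollary~\ref{pccpn}, and then deduce the limit from the resulting almost-monotonicity. The treatment of the limit in your last paragraph (observing $F(t)-Ct$ is monotone nonincreasing and bounded, hence convergent on $[0,t_0)$) is essentially verbatim the paper's argument. The preservation of pinching via Corollary~\ref{cor} is invoked correctly.

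However, there is a genuine gap in the computation that produces the differential inequality, and it centers on exactly the term you dismiss as ``harmless.'' The identity
\[
\frac{d}{dt}\int \varphi\,\rho_{y_0,t_0}\,d\mu_t = \int\Big(\frac{d}{dt}\varphi - \Delta\varphi\Big)\rho_{y_0,t_0}\,d\mu_t - \int \varphi\,\Big|H - \tfrac{(y-y_0)^\perp}{2(t_0-t)}\Big|^2\rho_{y_0,t_0}\,d\mu_t
\]
is the clean Huisken monotonicity for mean curvature flow \emph{in Euclidean space}, where the flow velocity equals the Euclidean mean curvature $\bar H$. Here the flow takes place in the curved ambient manifold $M\times\tilde M$, which is only isometrically embedded in $\R^N$, while $\rho_{y_0,t_0}$ is the Euclidean backward heat kernel. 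The velocity of the flow is $H=\bar H+V$ with $V=-\sum_a \textrm{II}_{M\times\tilde M}(e_a,e_a)$, and the completion of the square in the monotonicity computation produces $\big|\tfrac{F^\bot}{2(t_0-t)}+\bar H+\tfrac V2\big|^2$ (with a favorable sign) together with a leftover term $+\int(1-*\Omega)\rho_{y_0,t_0}\big|\tfrac V2\big|^2 d\mu_t$ with the \emph{wrong} sign. This last term is bounded (since $V$ is bounded on the compact $M\times\tilde M$, and $\int\rho_{y_0,t_0}\,d\mu_t$ is bounded by Huisken's monotonicity), and it is precisely the source of the constant $C>0$. Your remark that ``one can even take $C=0$ here'' is therefore incorrect in this setting; $C=0$ would hold only if $M\times\tilde M$ were totally geodesic in $\R^N$. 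Once you replace the Euclidean identity with the ambient-manifold version (as in \cite{wa8}), tracking the $V$-term through the computation, the rest of your argument goes through and recovers the lemma exactly as in the paper.
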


\begin{proof}
By \cite{wa8}:
\[\frac{d}{dt}\rho_{y_0,t_0}=-\Delta\rho_{y_0,t_0}-\rho_{y_0,t_0}\left(\frac{|F^\bot|^2}{4(t_0-t)^2}+\frac{F^\bot\cdot \bar{H}}{t_0-t}+\frac{F^\bot\cdot V}{2(t_0-t)}\right)\]
where $F^\bot\in T\R^N/T\Sigma_t$ is the orthogonal component of $F\in T\R^N$.

By \cite{wa3}:
\[\frac{d}{dt}d\mu_t=-|H|^2d\mu_t=-\bar{H}\cdot(\bar{H}+V)d\mu_t.\]

Combining these results, we obtain:
\begin{align*}
&\frac{d}{dt}\int (1-*\Omega)\rho_{y_0,t_0}d\mu_t\\
&\leq  \int [\Delta(1-*\Omega)-\delta*\Omega|\textrm{II}|^2]\rho_{y_0,t_0}d\mu_t\\
&-\int (1-*\Omega)\left[\Delta\rho_{y_0,t_0}+\rho_{y_0,t_0}\left(\frac{|F^\bot|^2}{4(t_0-t)^2}+\frac{F^\bot\cdot \bar{H}}{t_0-t}+\frac{F^\bot\cdot V}{2(t_0-t)}\right)\right]\\
&-\int(1-*\Omega)[\bar{H}\cdot(\bar{H}+V)]\rho_{y_0,t_0}d\mu_t\\
&=\int [\Delta(1-*\Omega)\rho_{y_0,t_0}-(1-*\Omega)\Delta\rho_{y_0,t_0}]d\mu_t-\delta\int*\Omega|\textrm{II}|^2\rho_{y_0,t_0}d\mu_t\\
&-\int(1-*\Omega)\rho_{y_0,t_0}\left[\left(\frac{|F^\bot|^2}{4(t_0-t)^2}+\frac{F^\bot\cdot \bar{H}}{t_0-t}+\frac{F^\bot\cdot V}{2(t_0-t)}\right)+|\bar{H}|^2+\bar{H}\cdot V\right]d\mu_t\\
&=-\delta\int*\Omega|\textrm{II}|^2\rho_{y_0,t_0}d\mu_t-\int(1-*\Omega)\rho_{y_0,t_0}\left|\frac{F^\bot}{2(t_0-t)}+\bar{H}+\frac{V}{2}\right|^2d\mu_t\\
&+\int(1-*\Omega)\rho_{y_0,t_0}\left|\frac{V}{2}\right|^2d\mu_t.
\end{align*}

Since $V$ is bounded, and since $\int(1-*\Omega)\rho_{y_0,t_0}d\mu_t\leq \int \rho_{(y_0,t_0)}d\mu_t<\infty$, it follows that:
\[
\frac{d}{dt}\int (1-*\Omega)\rho_{y_0,t_0}d\mu_t\leq C-\delta\int*\Omega|\textrm{II}|^2\rho_{y_0,t_0}d\mu_t
\]
for some constant $C$. Now $F(t)=\int (1-*\Omega)\rho_{y_0,t_0}d\mu_t$ is non-negative and $F'(t)\leq C$, or $F(t)-Ct$ is non-increasing in $t\in [0, t_0)$. From this it follows that the limit as $t\rightarrow t_0$ exists.

\end{proof}

For $\nu>1$, the parabolic dilation $D_\nu$ at $(y_0,t_0)$ is defined by:
\[D_\nu:\R^N\times[0,t_0)\rightarrow \R^N \times[-\nu^2t_0,0),\]
\[(y,t)\mapsto (\nu(y-y_0),\nu^2(t-t_0)).\]

Let $\mathcal{S}\subset \R^N\times[0,t_0)$ be the total space of the mean curvature flow, and let $\mathcal{S}_\nu\equiv D_\nu(\mathcal{S})\subset \R^N\times[-\nu^2t_0,0)$. If $s$ denotes the new time parameter, then $t=t_0+\frac{s}{\nu^2}$.

Let $d\mu^\nu_s$ be the induced volume form on $\Sigma$ by $F^\nu_s\equiv \nu F_{t_0+\frac{s}{\nu^2}}$. The image of $F^\nu_s$ is the $s-$slice of $\mathcal{S}_\nu$, denoted $\Sigma^\nu_s$.
 \begin{rem}\label{scaleinv}
Note that:
\[\int(1-*\Omega)\rho_{y_0,t_0}d\mu_t=\int(1-*\Omega)\rho_{0,0}d\mu^\nu_s\]
because $*\Omega$ and $\rho_{y_0,t_0}d\mu_t$ are invariant under parabolic dilation.
\end{rem}

\begin{lem}
For any $\tau>0$:
\[\displaystyle\lim_{\nu\rightarrow \infty}\int_{-1-\tau}^{-1}\int*\Omega|\textrm{II}|^2\rho_{0,0}d\mu^\nu_s ds=0.\]

\end{lem}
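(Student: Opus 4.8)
The plan is to exploit Lemma~\ref{limitandineq} together with Huisken's monotonicity formula and the scale invariance recorded in Remark~\ref{scaleinv}. First I would integrate the differential inequality
\[
\frac{d}{dt}\int (1-*\Omega)\rho_{y_0,t_0}\,d\mu_t\leq C-\delta\int *\Omega\,|\textrm{II}|^2\rho_{y_0,t_0}\,d\mu_t
\]
over a time interval $[t_1,t_2)\subset[0,t_0)$. Since the left-hand-side limit as $t\to t_0$ exists (Lemma~\ref{limitandineq}) and the integrand $\int(1-*\Omega)\rho_{y_0,t_0}\,d\mu_t$ is bounded (it is dominated by $\int\rho_{y_0,t_0}\,d\mu_t$, which is bounded by the monotonicity formula), rearranging gives
\[
\delta\int_{t_1}^{t_2}\int *\Omega\,|\textrm{II}|^2\rho_{y_0,t_0}\,d\mu_t\,dt\leq C(t_2-t_1)+\Big[\int(1-*\Omega)\rho_{y_0,t_0}\,d\mu_t\Big]_{t_2}^{t_1}.
\]
Hence the space-time integral $\int_{t_1}^{t_0}\int *\Omega\,|\textrm{II}|^2\rho_{y_0,t_0}\,d\mu_t\,dt$ is finite, and moreover as $t_1\to t_0$ it tends to $0$: the $C(t_0-t_1)$ term vanishes, and the difference of the boundary terms vanishes because the limit of $\int(1-*\Omega)\rho_{y_0,t_0}\,d\mu_t$ exists.

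The next step is to translate this vanishing into the rescaled picture. Under the parabolic dilation $D_\nu$ at $(y_0,t_0)$ with new time parameter $s=\nu^2(t-t_0)$, one has $t_0-t=-s/\nu^2$ and $dt=ds/\nu^2$. Because $*\Omega$, $|\textrm{II}|^2\,d\mu$ suitably rescaled, and $\rho\,d\mu\,dt$ are all invariant under parabolic dilation (the same principle as in Remark~\ref{scaleinv}, applied now to the integrand $*\Omega\,|\textrm{II}|^2\rho\,d\mu\,dt$), the quantity
\[
\int_{-1-\tau}^{-1}\int *\Omega\,|\textrm{II}|^2\rho_{0,0}\,d\mu^\nu_s\,ds
\]
equals the un-rescaled space-time integral $\int_{t_0-\frac{1+\tau}{\nu^2}}^{t_0-\frac{1}{\nu^2}}\int *\Omega\,|\textrm{II}|^2\rho_{y_0,t_0}\,d\mu_t\,dt$ over the shrinking interval $\big[t_0-\frac{1+\tau}{\nu^2},\,t_0-\frac{1}{\nu^2}\big)$. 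As $\nu\to\infty$ both endpoints of this interval converge to $t_0$, so by the finiteness-and-vanishing established in the first step the integral tends to $0$. This gives exactly the claimed limit.

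I would be careful about two bookkeeping points. First, I must verify that $*\Omega\,|\textrm{II}|^2\rho\,d\mu\,dt$ is genuinely scale invariant: $|\textrm{II}|$ has dimensions of inverse length so $|\textrm{II}|^2$ scales like $\nu^2$, while $\rho_{y_0,t_0}$ carries the factor $(t_0-t)^{-n}$ scaling like $\nu^{2n}$, $d\mu_t$ scales like $\nu^{-2n}$, and $dt$ like $\nu^{-2}$; the powers of $\nu$ cancel, and $*\Omega$ is pointwise scale invariant since it is a ratio of Jacobians. Second, I should make sure that the differential inequality from Lemma~\ref{limitandineq} is valid on all of $[0,t_0)$ — this uses that the pinching (hence $\Lambda_0'$-pinching and the inequality~\eqref{pdineq}) is preserved along the flow, which was arranged in Corollary~\ref{cor} and the remark following it. The main obstacle, modest as it is, is precisely this scale-invariance accounting together with justifying that one may pass the limit $\nu\to\infty$ inside — but since everything reduces to the tail of a convergent space-time integral over intervals shrinking to $t_0$, no delicate dominated-convergence argument is needed; the estimate is uniform in $\nu$ by construction.
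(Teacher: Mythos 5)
Your argument is correct and is essentially the same as the paper's: both rely on integrating the differential inequality of Lemma~\ref{limitandineq}, the existence of the limit there, and the parabolic-scale invariance of $*\Omega\,|\textrm{II}|^2\rho\,d\mu\,dt$; you merely integrate in the original time variable $t$ and then rescale, whereas the paper rescales the differential inequality to the $s$-variable first and then integrates from $-1-\tau$ to $-1$. (A small bookkeeping remark: in your dimensional check the exponents you list are the reciprocals of the natural scaling factors — $|\textrm{II}|^2$ on the dilated surface is $\nu^{-2}$ times the original, $d\mu^\nu_s=\nu^{2n}d\mu_t$, $\rho_{0,0}=\nu^{-2n}\rho_{y_0,t_0}$, $ds=\nu^2 dt$ — but the powers still cancel, so the conclusion stands.)
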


\begin{proof}
From Remark~\ref{scaleinv}:
\begin{align*}
\frac{d}{ds}\int(1-*\Omega)\rho_{0,0}d\mu^\nu_s&=\frac{1}{\nu^2}\frac{d}{dt}\int (1-*\Omega)\rho_{y_0,t_0}d\mu_t.
\end{align*}

Then by Lemma~\ref{limitandineq}:
\begin{align*}
\frac{d}{ds}\int(1-*\Omega)\rho_{0,0}d\mu^\nu_s\leq \frac{C}{\nu^2}-\frac{\delta}{\nu^2}\int*\Omega|\textrm{II}|^2\rho_{y_0,t_0}d\mu_t
\end{align*}
for some constant $C$.
But $\frac{1}{\nu^2}\int*\Omega|\textrm{II}|^2\rho_{y_0,t_0}d\mu_t=\int*\Omega|\textrm{II}|^2\rho_{0,0}d\mu^\nu_s$ since the norm of the second fundamental form scales like the inverse of the distance, so:
\[\frac{d}{ds}\int(1-*\Omega)\rho_{0,0}d\mu^\nu_s\leq \frac{C}{\nu^2}-\delta\int*\Omega|\textrm{II}|^2\rho_{0,0}d\mu^\nu_s.\]

Integrating this inequality with respect to $s$ from $-1-\tau$ to $-1$, we obtain:
\[ \delta\int_{-1-\tau}^{-1}\int*\Omega|\textrm{II}|^2\rho_{0,0}d\mu^\nu_s ds\leq -\int(1-*\Omega)\rho_{0,0}d\mu^\nu_{-1}+\int(1-*\Omega)\rho_{0,0}d\mu^\nu_{-1-\tau}+\frac{C}{\nu^2}.\]

By Remark~\ref{scaleinv} and the fact that $\displaystyle\lim_{t\rightarrow t_0}\int(1-*\Omega)\rho_{y_0,t_0}d\mu_t$ exists (Lemma~\ref{limitandineq}), the right-hand side of the inequality above approaches zero as $\nu\rightarrow \infty$.

\end{proof}

We take a sequence $\nu_j\rightarrow\infty$. Then for a fixed $\tau$:
\[\int_{-1-\tau}^{-1}\int*\Omega|\textrm{II}|^2\rho_{0,0}d\mu^{\nu_j}_s ds \leq C(j)\]
where $C(j)\rightarrow 0$.

Choose $\tau_j\rightarrow 0$ such that $\frac{C(j)}{\tau_j}\rightarrow 0$, and $s_j\in[-1-\tau_j,-1]$ so that
\begin{equation}\label{zerolimit}
\int*\Omega|\textrm{II}|^2\rho_{0,0}d\mu^{\nu_j}_{s_j} \leq \frac{C(j)}{\tau_j}.
\end{equation} Observe that
\[\rho_{0,0}(F^{\nu_j}_{s_j}, s_j)=\frac{1}{(4\pi(-s_j)^2)^n}\exp\left(\frac{-|F^{\nu_j}_{s_j}|^2}{4(-s_j)}\right).
\]

When $j$ is large enough, we may assume that $\tau_j\leq 1$, and thus that $s_j\in[-2,-1]$. For a ball centered at 0 of radius $R>0$, $B_R(0)\in\R^N$, we have:
\[ \int*\Omega|\textrm{II}|^2\rho_{0,0}d\mu^{\nu_j}_{s_j} \geq C'\int_{\Sigma^{\nu_j}_{s_j}\cap B_R(0)}*\Omega|\textrm{II}|^2d\mu^{\nu_j}_{s_j}
\] for a constant $C'>0$, since $s_j$ are bounded and since $|F^{\nu_j}_{s_j}|\leq R$ on $\Sigma^{\nu_j}_{s_j}\cap B_R(0)$.

Then by inequality~\eqref{zerolimit} and the fact that $*\Omega$ has a positive lower bound, we conclude the following result.

\begin{lem}\label{curvintzero}
For any compact set $\mathcal{K}\subset\R^N$:
\[\int_{\Sigma^{\nu_j}_{s_j}\cap \mathcal{K}}|\textrm{II}|^2d\mu^{\nu_j}_{s_j}\rightarrow 0\]
as $j\rightarrow \infty$.
\end{lem}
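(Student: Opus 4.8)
The plan is to deduce the lemma directly from the local curvature bound (\ref{zerolimit}) already established for the sequence of times $s_j$, simply by stripping off the Gaussian weight $\rho_{0,0}$ and the factor $*\Omega$ at the cost of constants that are uniform in $j$ on the given compact set.

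First I would fix the compact set $\mathcal{K}$ and choose $R>0$ with $\mathcal{K}\subset B_R(0)$. For $j$ large we have $\tau_j\leq 1$, hence $s_j\in[-2,-1]$, and on $\Sigma^{\nu_j}_{s_j}\cap B_R(0)$ we have $|F^{\nu_j}_{s_j}|\leq R$; therefore
\[\rho_{0,0}(F^{\nu_j}_{s_j},s_j)=\frac{1}{(4\pi(-s_j)^2)^n}\exp\left(\frac{-|F^{\nu_j}_{s_j}|^2}{4(-s_j)}\right)\geq C'(R,n)>0\]
on that region, with $C'$ independent of $j$. This is the one step where it matters that the times $s_j$ were selected in $[-1-\tau_j,-1]$ with $\tau_j\to 0$, so that they stay bounded away from the singular time $0$. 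Combining this with the uniform lower bound $*\Omega\geq \frac{1}{2^n}-\epsilon=:c_0>0$, which holds on every $\Sigma_t$ by Proposition~\ref{lbpreserved} and is preserved under the parabolic dilation $D_{\nu_j}$ because $*\Omega$ is scale-invariant (Remark~\ref{scaleinv}), I obtain for all large $j$
\[\int_{\Sigma^{\nu_j}_{s_j}\cap\mathcal{K}}|\textrm{II}|^2\,d\mu^{\nu_j}_{s_j}\leq\frac{1}{c_0 C'}\int_{\Sigma^{\nu_j}_{s_j}\cap B_R(0)}*\Omega\,|\textrm{II}|^2\,\rho_{0,0}\,d\mu^{\nu_j}_{s_j}\leq\frac{1}{c_0 C'}\int*\Omega\,|\textrm{II}|^2\,\rho_{0,0}\,d\mu^{\nu_j}_{s_j}.\]
By (\ref{zerolimit}) the last integral is at most $C(j)/\tau_j$, which tends to $0$ by the choice of $\tau_j$, so the left-hand side tends to $0$, as claimed.

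There is no real obstacle here: the statement is essentially a repackaging of (\ref{zerolimit}), and almost all of the work has already been carried out in the displayed estimates preceding the lemma. The only point that requires any care is the uniformity in $j$ of the heat-kernel lower bound $C'$ over $\mathcal{K}$, which is precisely the reason the rescaling times $s_j$ were chosen to remain in a fixed compact subinterval of $(-\infty,0)$.
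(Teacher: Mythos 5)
Your argument is correct and follows essentially the same route as the paper: bound $\rho_{0,0}$ below by a constant $C'$ on $\Sigma^{\nu_j}_{s_j}\cap B_R(0)$ using $s_j\in[-2,-1]$ and $|F^{\nu_j}_{s_j}|\leq R$, invoke the scale-invariant positive lower bound on $*\Omega$, and then apply~\eqref{zerolimit}. The only difference is that you spell out explicitly the uniformity of $C'$ in $j$ and the role of Remark~\ref{scaleinv}, steps the paper leaves implicit.
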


Then, as shown in \cite{wa3}, it follows that
\[
\displaystyle\lim_{t\rightarrow t_0}\int\rho_{y_0,t_0}d\mu_t\leq 1.
\] Finally, White's theorem \cite{wh} implies that $(y_0,t_0)$ is a regular point whenever
\[\lim_{t\rightarrow t_0}\int \rho_{y_0,t_0}d\mu_t\leq 1+\epsilon,\]
contradicting the initial assumption that $(y_0,t_0)$ is a singular point.

\subsection{Convergence to a biholomorphic isometry}

In the preceding sections we have shown that the mean curvature flow $\Sigma_t$ of the graph of symplectomorphism $f:\mathbb{CP}^n\rightarrow \mathbb{CP}^n$ exists smoothly for all $t>0$, and that $\Sigma_t$ is a graph of symplectomorphisms for each $t$ under the pinching condition. We conclude the proof of Theorem~\ref{theorem} by showing that $\Sigma_t$ converges to the graph of a biholomorphic isometry.

By Proposition~\ref{evolution}:
\[\left(\frac{d}{dt}-\Delta\right)*\Omega=*\Omega \left[Q(\lambda_i,h_{jkl})+\displaystyle\sum_{k\text{ odd}}\frac{(1-\lambda_k^2)^2}{(1+\lambda_k^2)^2}\right]\]
along the mean curvature flow, where $Q\geq 0$ whenever $\frac{1}{\Lambda_0}\leq \lambda_i\leq \Lambda_0$.

We use this result to derive the evolution equation of $\ln *\Omega$, which we then apply to show that $\displaystyle\lim_{t\rightarrow\infty}*\Omega=\frac{1}{2^n}$.

\begin{pro} \label{logevolution}
Under the same assumption as in Proposition \ref{evolution},
at each point $q\in\Sigma_t$, $\ln *\Omega$ satisfies the following equation:
\begin{align*}
\frac{d}{dt}\ln *\Omega=&\Delta \ln *\Omega+\overline{Q}(\lambda_i, h_{jkl}) +\displaystyle\sum_{k}\displaystyle\sum_{i \neq k}\frac{\lambda_i}{(1+\lambda_k^2)(\lambda_i+\lambda_{i'})}(R_{ikik}-\lambda_k^2\tilde{R}_{ikik}),
\end{align*}
where $R_{ijkl}$ and $\tilde{R}_{ijkl}$ are the coefficients of the curvature tensors of $M$ and $\tilde{M}$ with respect to the chosen bases (\ref{e_i}) and (\ref{e_2n+i}), $i'=i+(-1)^{i+1}$, and
\begin{equation}
\overline{Q}(\lambda_i, h_{jkl})=Q(\lambda_i, h_{jkl})+\displaystyle\sum_{k}\left[\displaystyle\sum_{i\,\, \text{odd}}(\lambda_i-\lambda_{i'})h_{i i' k}\right]^2
\end{equation}
with $Q(\lambda_i, h_{jkl})$ given by Proposition~\ref{evolution} and equation (\ref{quadform}).
\end{pro}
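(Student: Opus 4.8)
The plan is to derive the evolution equation for $\ln *\Omega$ directly from the evolution equation for $*\Omega$ established in Proposition~\ref{evolution}. Since $*\Omega$ is positive along the flow (by the graphical condition assumed in the hypotheses), the chain rule is legitimate: for a positive function $u$ satisfying $\frac{d}{dt}u = \Delta u + u\cdot(\text{stuff})$, one has $\frac{d}{dt}\ln u = \frac{1}{u}\frac{d}{dt}u = \frac{\Delta u}{u} + (\text{stuff})$, and the identity $\Delta \ln u = \frac{\Delta u}{u} - \frac{|\nabla u|^2}{u^2}$ converts this into $\frac{d}{dt}\ln u = \Delta \ln u + \frac{|\nabla u|^2}{u^2} + (\text{stuff})$. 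So the whole content of the proposition is to identify the gradient term $\frac{|\nabla *\Omega|^2}{(*\Omega)^2}$ explicitly in terms of the $\lambda_i$ and $h_{ijk}$, and to check that it equals precisely $\sum_k\bigl[\sum_{i\text{ odd}}(\lambda_i-\lambda_{i'})h_{ii'k}\bigr]^2$.

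First I would compute $\nabla_{e_k} *\Omega$ at the chosen point $q$, using the expression $*\Omega = \bigl(\prod_j(1+\lambda_j^2)\bigr)^{-1/2}$ together with the basis $(e_i)$ from (\ref{e_i}) and (\ref{e_2n+i}). The key input is the formula for the covariant derivative of the volume form $\Omega$ restricted to $\Sigma_t$, which in this framework (following \cite{wa3}) is $\nabla_{e_k}(*\Omega) = *\Omega\cdot\sum_{i}(\text{something involving } h_{ijk} \text{ and } \pi_1(\mathcal{J}e_\cdot))$; concretely the derivative of $\Omega(e_1,\dots,e_{2n})$ picks up, for each slot $i$, a term $\Omega(e_1,\dots,\nabla_{e_k}e_i,\dots,e_{2n})$, and the normal part of $\nabla_{e_k}e_i$ is governed by the second fundamental form coefficients $h_{ijk}$. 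Using $\pi_1(\mathcal{J}e_p) = \frac{1}{\sqrt{1+\lambda_p^2}}Ja_p = \frac{(-1)^{p+1}}{\sqrt{1+\lambda_p^2}}a_{p'}$ (equation (\ref{3.3}) and the relation $Ja_p=(-1)^{p+1}a_{p'}$), exactly as in the computation of $\mathcal{A}$ in the proof of Proposition~\ref{evolution}, the sum over slots collapses via the Kronecker-delta bookkeeping to leave $\nabla_{e_k}(*\Omega)/(*\Omega) = -\sum_{i\text{ odd}}(\lambda_i-\lambda_{i'})h_{ii'k}$ (up to the sign conventions encoded in the primed-index notation). Squaring and summing over $k$ then gives the claimed gradient term.

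The bulk of the remaining bookkeeping is routine: the curvature term in Proposition~\ref{evolution} is written as $\sum_{i,k}\frac{\lambda_i^2}{(1+\lambda_k^2)(1+\lambda_i^2)}(R_{ikik}-\lambda_k^2\tilde R_{ikik})$, and since $\frac{\lambda_i^2}{1+\lambda_i^2} = \frac{\lambda_i}{\lambda_i+\lambda_{i'}}$ (using $\lambda_i\lambda_{i'}=1$), this is exactly $\sum_k\sum_{i\ne k}\frac{\lambda_i}{(1+\lambda_k^2)(\lambda_i+\lambda_{i'})}(R_{ikik}-\tilde R_{ikik}\lambda_k^2)$ once one notes the $i=k$ terms vanish (the curvature tensor is skew, $R_{kkkk}=0$) so the restriction $i\ne k$ is harmless and matches the stated form. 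I expect the main obstacle to be purely notational rather than conceptual: keeping the signs straight in the $i'=i+(-1)^{i+1}$ convention when expanding $\nabla_{e_k}\Omega(e_1,\dots,e_{2n})$, and correctly pairing the derivative hitting slot $i$ with the surviving $h_{i'\,?\,k}$ coefficient so that the cross terms reorganize into a perfect square $\bigl[\sum_{i\text{ odd}}(\lambda_i-\lambda_{i'})h_{ii'k}\bigr]^2$ rather than a general quadratic. Once that square is pinned down, the rest follows immediately from $\frac{d}{dt}\ln *\Omega = \frac{d}{dt}(*\Omega)/(*\Omega)$ and $\Delta\ln *\Omega = \frac{\Delta *\Omega}{*\Omega} - \frac{|\nabla *\Omega|^2}{(*\Omega)^2}$, with $\overline{Q} = Q + |\nabla \ln *\Omega|^2$ by definition.
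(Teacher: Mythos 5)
Your proposal is correct and follows essentially the same route as the paper: reduce to the chain-rule identity $\Delta\ln u = \Delta u/u - |\nabla u|^2/u^2$, identify the new term as $|\nabla *\Omega|^2/(*\Omega)^2$, and compute $(*\Omega)_k$ by differentiating $\Omega(e_1,\dots,e_{2n})$ and projecting onto the normal bundle exactly as in the $\mathcal{A}$ computation of Proposition~\ref{evolution}, yielding $(*\Omega)_k/(*\Omega) = -\sum_{i\text{ odd}}(\lambda_i-\lambda_{i'})h_{ii'k}$. Your observations that $\lambda_i^2/(1+\lambda_i^2)=\lambda_i/(\lambda_i+\lambda_{i'})$ via $\lambda_i\lambda_{i'}=1$ and that the $i\neq k$ restriction is vacuous since $R_{kkkk}=0$ are both precisely what is needed to match the two statements.
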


\begin{proof} We compute
\[\frac{d}{dt}\ln *\Omega=\frac{1}{*\Omega}\frac{d}{dt} *\Omega \text{ and }
\Delta (\ln *\Omega)=\frac{*\Omega\Delta(*\Omega)-|\nabla *\Omega|^2}{(*\Omega)^2}.\]
By Proposition~\ref{evolution}, it follows that
\[
\left(\frac{d}{dt}-\Delta\right)\ln *\Omega = Q(\lambda_i,h_{jkl})+\displaystyle\sum_{k}\displaystyle\sum_{i \neq k}\frac{\lambda_i}{(1+\lambda_k^2)(\lambda_i+\lambda_{i'})}(R_{ikik}-\lambda_k^2\tilde{R}_{ikik})+\frac{|\nabla *\Omega|^2}{(*\Omega)^2}.
\]

We compute
\begin{align*}
(*\Omega)_k &=\displaystyle\sum_i\Omega(e_1, \ldots,(\nabla^{M\times\tilde{M}}_{e_k}-\nabla^\Sigma_{e_k})e_i,\ldots, e_{2n})\\
&=\displaystyle\sum_i\Omega(e_1,\ldots,\langle\nabla^{M\times\tilde{M}}_{e_k}e_i, \mathcal{J}e_p \rangle\mathcal{J}e_p,\ldots, e_{2n})\\
&=\displaystyle\sum_{p,i}\Omega(e_1, \ldots,\mathcal{J}e_p,\ldots, e_{2n})h_{pik}\\
\end{align*}

As the simplification of the expression $\mathcal{A}$ in the proof of Proposition \ref{evolution}, we obtain
\[(*\Omega)_k=*\Omega\displaystyle\sum_{i}(-1)^i\lambda_ih_{ii'k}=-*\Omega \sum_{i\,\,\text{odd}}(\lambda_i-\lambda_{i'})h_{i i'k}.\]
It follows that:
\begin{align*}
\frac{|\nabla *\Omega|^2}{(*\Omega)^2}&=\displaystyle\sum_{k}\left[\displaystyle\sum_{i\,\, \text{odd}}(\lambda_i-\lambda_{i'})h_{i i' k}\right]^2,\\
\end{align*}
and thus
\[\left(\frac{d}{dt}-\Delta\right)\ln *\Omega=\overline{Q}(\lambda_i,h_{jkl})+\displaystyle\sum_{k}\displaystyle\sum_{i \neq k}\frac{\lambda_i}{(1+\lambda_k^2)(\lambda_i+\lambda_{i'})}(R_{ikik}-\lambda_k^2\tilde{R}_{ikik}),\]
where $\overline{Q}(\lambda_i, h_{jkl})=Q(\lambda_i, h_{jkl})+\displaystyle\sum_{k}\left[\displaystyle\sum_{i\,\, \text{odd}}(\lambda_i-\lambda_{i'})h_{i i' k}\right]^2$ is a new quadratic form in $h_{ijk}$, with coefficients depending on the singular values of $f$.
\end{proof}

\begin{cor} \label{cpnlog}
Under the same assumption as in Proposition \ref{evolution}, suppose in addition that $M$ and $\tilde{M}$ are both $\cp^n$ with the Fubini-Study metric, then:
\begin{align*}
\frac{d}{dt}\ln *\Omega=&\Delta \ln *\Omega+\overline{Q}(\lambda_i, h_{ijk})+\sum_{k\text{ odd}}\frac{(1-\lambda_k^2)^2}{(1+\lambda_k^2)^2}.
\end{align*}
\end{cor}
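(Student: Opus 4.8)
The plan is to read this corollary directly off Proposition~\ref{logevolution} together with the Fubini--Study curvature computation already carried out in the proof of Corollary~\ref{cpn}. Proposition~\ref{logevolution} supplies, for any pair of K\"ahler--Einstein manifolds of the same scalar curvature, the evolution equation
\[
\left(\frac{d}{dt}-\Delta\right)\ln *\Omega=\overline{Q}(\lambda_i,h_{jkl})+\sum_{k}\sum_{i\neq k}\frac{\lambda_i}{(1+\lambda_k^2)(\lambda_i+\lambda_{i'})}\left(R_{ikik}-\lambda_k^2\tilde{R}_{ikik}\right),
\]
in which the only term that still refers to the ambient geometry is the curvature sum on the right. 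Hence the entire content of the corollary is the evaluation of that sum when $M=\tilde{M}=\cp^n$ carries the Fubini--Study metric.

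The key observation is that, since $\lambda_i\lambda_{i'}=1$, one has $\frac{\lambda_i}{\lambda_i+\lambda_{i'}}=\frac{\lambda_i^2}{1+\lambda_i^2}$, so this curvature sum is \emph{term by term} equal to the ambient term $\mathcal{B}/(*\Omega)$ appearing in Proposition~\ref{evolution}. But that term was already computed in the proof of Corollary~\ref{cpn}: using the sectional-curvature formula of the Fubini--Study metric together with the special form of the diagonalizing bases from Proposition~\ref{basis} and Corollary~\ref{imagebasis} (so that $R_{ikik}=\tilde{R}_{ikik}=\tfrac14(1+3\delta_{ik'})$ for $i\neq k$), splitting the sum into the $i=k'$ part and the $i\neq k,k'$ part, using $\sum_{i\neq k,k'}\frac{\lambda_i}{\lambda_i+\lambda_{i'}}=n-1$, and noting that the residual $\frac{n-1}{4}\sum_k\frac{1-\lambda_k^2}{1+\lambda_k^2}$ cancels between odd and even $k$, the sum collapses to $\sum_{k}\frac{1-\lambda_k^2}{(1+\lambda_k^2)^2}=\sum_{k\text{ odd}}\frac{(1-\lambda_k^2)^2}{(1+\lambda_k^2)^2}$.

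Substituting this value into the formula of Proposition~\ref{logevolution}, and recalling that by definition $\overline{Q}(\lambda_i,h_{jkl})=Q(\lambda_i,h_{jkl})+\sum_{k}\bigl[\sum_{i\text{ odd}}(\lambda_i-\lambda_{i'})h_{ii'k}\bigr]^2$, yields exactly the stated identity. I do not anticipate any genuine obstacle here: this corollary is bookkeeping that glues the already-established general log-evolution equation to the already-performed $\cp^n$ curvature computation. The one point that deserves an explicit line is the elementary identity $\frac{\lambda_i}{\lambda_i+\lambda_{i'}}=\frac{\lambda_i^2}{1+\lambda_i^2}$, which is what lets one invoke Corollary~\ref{cpn} verbatim instead of repeating its calculation.
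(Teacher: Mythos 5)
Your proposal is correct and matches the paper's proof, which simply cites Proposition~\ref{logevolution} and Corollary~\ref{cpn} as a ``direct consequence.'' Your explicit observation that $\lambda_i/(\lambda_i+\lambda_{i'})=\lambda_i^2/(1+\lambda_i^2)$ (via $\lambda_i\lambda_{i'}=1$), which makes the two curvature sums coincide term by term, is exactly the bookkeeping the paper leaves implicit.
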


\begin{proof}
This is a direct consequence of Proposition~\ref{logevolution} and Corollary~\ref{cpn}.
\end{proof}

\begin{rem}
$\overline{Q}$ is a positive definite quadratic form of $h_{ijk}$ whenever $Q$ is, and in fact it allows for an improvement of the pinching constant.
\end{rem}

We use the evolution equation of $\ln *\Omega$ to show that $\displaystyle\lim_{t\rightarrow\infty}*\Omega=\frac{1}{2^n}$. Fix a $k$ and notice that
\[\frac{(1-\lambda_k^2)^2}{(1+\lambda_k^2)^2}=\frac{(\lambda_k-\lambda_{k'})^2}{(\lambda_k+\lambda_{k'})^2}=\frac{x-4}{x},
\]
where $x=(\lambda_k+\lambda_{k'})^2$.

Since $\lambda_k\lambda_{k'}=1$, it follows that $\lambda_k+\lambda_{k'}\geq 2$, and thus $x\geq 4$. Moreover, the pinching condition implies that $x\leq \left({\Lambda_0}+\frac{1}{{\Lambda_0}}\right)^2$.

We claim
\[\frac{x-4}{x}\geq c\left(\frac{1}{2}\ln x-\ln2\right)\]
for $c=\frac{8}{({\Lambda_0}+\frac{1}{{\Lambda_0}})^2}$.

To see this, let $f(x)=\frac{x-4}{x}$, $g(x)=c\left(\frac{1}{2}\ln x-\ln 2\right)$ and notice that
$f(4)=g(4)=0$. We compute
\[f'(x)=\frac{x-x+4}{x^2}=\frac{4}{x^2} \text{ and } g'(x)=\frac{c}{2x}.
\]

Thus
\[\frac{f'(x)}{g'(x)}=\frac{4}{x^2}\frac{2x}{c}=\frac{8}{cx}\geq 1.
\]
The last inequality follows from the choice of $c$ and the fact that $x\leq \left({\Lambda_0}+\frac{1}{{\Lambda_0}}\right)^2$. Now since $f(4)=g(4)$ and $f'(x)\geq g'(x)$ for $4 \leq x \leq\left({\Lambda_0}+\frac{1}{{\Lambda_0}}\right)^2$, it follows that $f(x)\geq g(x)$.

Substituting back, we obtain\[\frac{(\lambda_k-\lambda_{k'})^2}{(\lambda_k+\lambda_{k'})^2}\geq c\left(\ln (\lambda_k+\lambda_{k'})-\ln 2\right),\]
and thus
\begin{align*}
\displaystyle\sum_{k\text{ odd}}\frac{(1-\lambda_k^2)^2}{(1+\lambda_k^2)^2}=\displaystyle\sum_{k\text{ odd}}\frac{(\lambda_k-\lambda_{k'})^2}{(\lambda_k+\lambda_{k'})^2}&\geq c\left(-\ln \displaystyle\prod_{k\text{ odd}}\frac{1}{\lambda_k+\lambda_{k'}} -n\ln 2\right)\\
&=-c\left(\ln *\Omega - \ln \frac{1}{2^n}\right).
\end{align*}
Therefore under the pinching condition:
\[\left(\frac{d}{dt}-\Delta\right)\left(\ln *\Omega- \ln \frac{1}{2^n}\right)\geq -c\left(\ln *\Omega - \ln \frac{1}{2^n}\right).\]

The pinching condition holds along the mean curvature flow, so this holds for all times. By the comparison principle for parabolic equations, $\displaystyle\lim_{t\rightarrow \infty}\displaystyle\min_{\Sigma_t}\ln *\Omega-\ln\frac{1}{2^n}=0$, and thus  $\displaystyle\lim_{t\rightarrow \infty}\displaystyle\min_{\Sigma_t} *\Omega=\frac{1}{2^n}$. This in turn implies, by Lemma~\ref{boundpinch},  that $\lambda_i\rightarrow 1 $ as $t\rightarrow\infty$ for all $i$.

For the rest of the proof, we modify the method from \cite{wa3} to show the second fundamental form is uniformly bounded in time. Let $\epsilon>0$ and let $\eta_\epsilon=*\Omega-\frac{1}{2^n}+\epsilon$. Note that $\displaystyle\min_{\Sigma_t}\eta_\epsilon$ is nondecreasing, and $\eta_\epsilon\rightarrow \epsilon$ when $t\rightarrow\infty$. Let $T_\epsilon\geq 0$ be a time such that $\eta_\epsilon|_{T_\epsilon} > 0$ (so that for all $t\geq T_\epsilon$: $\eta_\epsilon>0$).

Now for all $p\in M$, and all $t> T_\epsilon$:
\begin{align*}
\frac{d}{dt}\eta_\epsilon&=\Delta\eta_\epsilon +*\Omega(Q+B)\\
&\geq \Delta\eta_\epsilon +\delta *\Omega |\textrm{II}|^2\\
&=\Delta\eta_\epsilon +\frac{\delta}{\eta_\epsilon}\eta_\epsilon *\Omega |\textrm{II}|^2.
\end{align*}

On the other hand, from \cite{wa3}, $|\textrm{II}|^2$ satisfies the following equation along the mean curvature flow:
\begin{align*}
\frac{d}{dt}|\textrm{II}|^2&=\Delta |\textrm{II}|^2-2|\nabla \textrm{II}|^2+[(\nabla ^M_{\partial_k})\mathcal{R}(\mathcal{J}e_p, e_i,e_j,e_k)+(\nabla ^M_{\partial_j}\mathcal{R})(\mathcal{J}e_p,e_k,e_i,e_k)]h_{p i j}\\
&-2\mathcal{R}(e_l,e_i,e_j,e_k)h_{p l k}h_{p i j}+4 \mathcal{R}(\mathcal{J}e_p,\mathcal{J}e_q,e_j, e_k)h_{q i k}h_{p i j}\\
&-2\mathcal{R}(e_l,e_k,e_i,e_k)h_{p l j}h_{p i j}+\mathcal{R}(\mathcal{J}e_p,e_k,\mathcal{J}e_q,e_k)h_{q i j}h_{p i j}\\
&+\displaystyle\sum_{p, r, i, m} (\displaystyle\sum_{k} h_{p i k}h_{r m k}-h_{p m k}h_{r i k})^2+\displaystyle\sum_{i,j,m,k}(\displaystyle\sum_{p} h_{p i j}h_{p m k})^2.
\end{align*}

Since $M\times\tilde{M}$ is a symmetric space, the curvature tensor $\mathcal{R}$ of $M\times\tilde{M}$ is parallel, and thus $|\textrm{II}|^2$ satisfies:
\[\frac{d}{dt}|\textrm{II}|^2\leq \Delta|\textrm{II}|^2-2|\nabla \textrm{II}|^2+K_1|\textrm{II}|^4+K_2|\textrm{II}|^2\]
for positive constants $K_1$ and $K_2$ that depend only on $n$.

Therefore:
\begin{align*}
\frac{d}{dt}(\eta_\epsilon^{-1} |\textrm{II}|^2) &\leq -\eta_\epsilon^{-2}|\textrm{II}|^2(\Delta\eta_\epsilon +\delta *\Omega |\textrm{II}|^2)+\eta_\epsilon^{-1} (\Delta|\textrm{II}|^2-2|\nabla II|^2+K_1|\textrm{II}|^4+K_2|\textrm{II}|^2)\\
&=-\eta_\epsilon^{-2}\Delta\eta_\epsilon|\textrm{II}|^2+\eta_\epsilon^{-1}\Delta |\textrm{II}|^2 -2\eta_\epsilon^{-1}|\nabla \textrm{II}|^2+\eta_\epsilon^{-2}(\eta_\epsilon K_1-\delta *\Omega)|\textrm{II}|^4+\eta_\epsilon^{-1}K_2|\textrm{II}|^2\\
&=\Delta(\eta_\epsilon^{-1})|\textrm{II}|^2-2\eta_\epsilon^{-3}|\nabla \eta_\epsilon|^2|\textrm{II}|^2+\eta_\epsilon^{-1}\Delta |\textrm{II}|^2 -2\eta_\epsilon^{-1}|\nabla \textrm{II}|^2\\
&+\eta_\epsilon^{-2}(\eta_\epsilon K_1-\delta *\Omega)|\textrm{II}|^4+\eta_\epsilon^{-1}K_2|A|^2\\
&=\Delta(\eta_\epsilon^{-1})|\textrm{II}|^2-2\eta_\epsilon|\nabla (\eta_\epsilon^{-1})|^2|\textrm{II}|^2+\eta_\epsilon^{-1}\Delta |\textrm{II}|^2 -2\eta_\epsilon^{-1}|\nabla \textrm{II}|^2\\
&+\eta_\epsilon^{-2}(\eta_\epsilon K_1-\delta *\Omega)|\textrm{II}|^4+\eta_\epsilon^{-1}K_2|\textrm{II}|^2\\
&=\Delta(\eta_\epsilon^{-1} |\textrm{II}|^2)-2\nabla(\eta_\epsilon^{-1})\cdot\nabla (|\textrm{II}|^2)-2\eta_\epsilon|\nabla (\eta_\epsilon^{-1})|^2|\textrm{II}|^2 -2\eta_\epsilon^{-1}|\nabla \textrm{II}|^2\\
&+\eta_\epsilon^{-2}(\eta_\epsilon K_1-\delta *\Omega)|\textrm{II}|^4+\eta_\epsilon^{-1}K_2|\textrm{II}|^2.\\
\end{align*}
We apply the relation that \[-2\nabla(\eta_\epsilon^{-1})\cdot\nabla (|\textrm{II}|^2)-2\eta_\epsilon|\nabla (\eta_\epsilon^{-1})|^2|\textrm{II}|^2=-2\eta_\epsilon \nabla(\eta_\epsilon^{-1})\cdot \nabla (\eta_\epsilon^{-1}|\textrm{II}^2).\]
Therefore the function $\psi=\eta_\epsilon^{-1}|\textrm{II}|^2$ satisfies:
\begin{align*}
\frac{d}{dt}\psi &\leq \Delta \psi -2\eta_\epsilon\nabla \eta_\epsilon^{-1}\cdot \nabla \psi +(\eta_\epsilon K_1-\delta *\Omega)\psi^2+K_2\psi\\
&\leq \Delta \psi -2\eta_\epsilon\nabla \eta_\epsilon^{-1}\cdot \nabla \psi +(\epsilon K_1-\delta C_0)\psi^2+K_2\psi,
\end{align*}
where $C_0=\displaystyle\min_{\Sigma_0} *\Omega$, since $\displaystyle\min_{\Sigma_t} *\Omega$ is nondecreasing and $\eta_\epsilon\leq \epsilon$. $\epsilon$ can be chosen small enough so that $\epsilon K_1-\delta C_0<0$.
Then by the comparison principle for parabolic PDE,
$\psi\leq y(t)$ for all $t\geq T_\epsilon$, where $y(t)$ is the solution of the ODE
\[\frac{d}{dt}y=-(\delta C_0-\epsilon K_1)y^2+K_2y
\]satisfying the initial condition $y(T_\epsilon)=\displaystyle\max_{\Sigma_{T_\epsilon}}\psi$. $y(t)$ can be solved explicitly:
\[y(t)=\begin{cases}
\frac{K_2}{\delta C_0-\epsilon K_1}, & \mbox{if } \displaystyle\max_{\Sigma_{T_\epsilon}}\psi=\frac{K_2}{\delta C_0-\epsilon K_1}\\
\frac{K_2 K e^{K_2 t}}{(\delta C_0-\epsilon K_2)Ke^{K_2 t}-1}, & \mbox{otherwise}
\end{cases},
\]
where $K$ is a constant satisfying $K>0$ if $\displaystyle\max_{\Sigma_{T_\epsilon}}\psi>\frac{K_2}{\delta C_0-\epsilon K_1}$, and $K<0$ if $\displaystyle\max_{\Sigma_{T_\epsilon}}\psi<\frac{K_2}{\delta C_0-\epsilon K_1}$. Thus
\[|\textrm{II}|^2\leq \eta_\epsilon y(t)\leq \epsilon y(t)
\] for all $t\geq T_\epsilon$.

Sending $t\rightarrow\infty$ and $\epsilon\rightarrow 0$, we conclude that $\displaystyle\max_{\Sigma_t} |\textrm{II}|^2\rightarrow 0$ as $t\rightarrow \infty$. Finally, the induced metric and the volume functional both have analytic dependence on $F$, so by Simon's theorem \cite{si} the flow converges to a unique limit at infinity.

Since $\lambda_i\rightarrow 1$ for all $i$ as $t\rightarrow\infty$, the limit map is an isometry. Denote it by $f_{\infty}$. Being symplectic is a closed property, so $f_\infty$ is symplectic. Then at every $p\in M$:
\[Df_\infty J=\tilde{J}Df_\infty\]
The same is true for the inverse of $f_\infty$, and thus the map $f_\infty$ is biholomorphic.

\section{Appendix}

\subsection{Proof of Lemma \ref{evalue}}
We recall that $h_{ijk}$ is symmetric in all three indexes, that all indexes range from $1$ to $2n$ unless otherwise (such as $i$ odd) is mentioned, and that $i'=i+(-1)^{i+1}$.
The object of study is the quadratic form $\tilde{Q}(h_{ijk})$ given by
\begin{equation}\begin{split} \label{quadform2}
&\displaystyle\sum_{i,j,k}h_{ijk}^2-2\displaystyle\sum_{k}\displaystyle\sum_{i\text{ odd}}(h_{iik}h_{i'i'k}-h_{ii'k}^2)+4\displaystyle\sum_{k}\displaystyle\sum_{i\text{ odd}<j\text{ odd}}(h_{i'jk}h_{j'ik}-h_{ijk}h_{j'i'k})\\
&=A+B+C.
\end{split}
\end{equation}

 We shall use the full symmetry of $h_{ijk}$ to show the smallest eigenvalue of $\tilde{Q}$ is positive.  The quadratic form $\tilde{Q}$ will be divided into three summands such that the indexes of the first summand $\tilde{Q}_1$ only involve $i$ and $i'$ for odd $i$'s, the indexes of the second summand $\tilde{Q}_2$ only involve $i, i', j, j'$ for odd $i$ and odd $j$ with $i\not= j$, the indexes of the third summand $\tilde{Q}_3$ involve $i, i', j, j', k, k'$ for odd $i$, $j$, and $k$ such that no two of them are the same. This corresponds to a direct sum decomposition of the space of $h_{ijk}$ in which each of the summand is an invariant subspace of the symmetry group. We state the result in two Lemmas and give the proof of second Lemma  first, which implies Lemma \ref{evalue}. In the rest of the section, we verify the formulas in first Lemma.

\begin{lem} The three summands of $\tilde{Q}$ in (\ref{quadform2}) can be rewritten in the following way:
\[\begin{split}
A&=\sum_{i} h_{iii}^2+ 3\sum_{i \text{ odd}} (h_{ii'i'}^2+h_{i'ii}^2)\\
&+3 \sum_{i\text{ odd} < j \text{ odd}}(h_{ijj}^2+h_{ij'j'}^2+h_{i'jj}^2+h_{i'j'j'}^2+h_{jii}^2+h_{j'ii}^2+h_{ji'i'}^2+h_{j'i'i'}^2)\\
&+6\sum_{i \text{ odd}<j \text{ odd}} (h_{ii'j}^2+ h_{ii'j'}^2+h_{ijj'}^2+h_{i'jj'}^2)\\
&+6\sum_{i \text{ odd} < j \text{ odd} <k \text{ odd}}( h_{ijk}^2+ h_{ijk'}^2+h_{ij'k}^2+ h_{ij'k'}^2+h_{i'jk}^2+ h_{i'jk'}^2+h_{i'j'k}^2+ h_{i'j'k'}^2)\\
B&=-2\sum_{ i\text{ odd}}h_{iii}h_{i'i'i}+2\sum_{ i\text{ odd}} h_{ii'i}^2-2\sum_{ i\text{ odd}}h_{iii'}h_{i'i'i'}+2\sum_{ i\text{ odd}} h_{ii'i'}^2,\\
&-2\sum_{ i\text{ odd}< j \text{ odd} }(h_{iij}h_{i'i'j}-h_{ii'j}^2+h_{iij'}h_{i'i'j'}-h_{ii'j'}^2)\\
&-2\sum_{ i\text{ odd}< j \text{ odd}}(h_{jji}h_{j'j'i}-h_{jj'i}^2+h_{jji'}h_{j'j'i'}-h_{jj'i'}^2), \text{ and}\\
C&=4\sum_{i\text{ odd}<j\text{ odd}}(h_{i'ji}h_{j'ii}-h_{iji}h_{j'i'i}+h_{i'ji'}h_{j'ii'}-h_{iji'}h_{j'i'i'})\\
&+4\sum_{i\text{ odd}<j\text{ odd}}(h_{i'jj}h_{j'ij}-h_{ijj}h_{j'i'j}+h_{i'jj'}h_{j'ij'}-h_{ijj'}h_{j'i'j'})\\
&+4\sum_{i \text{ odd}< j\text{ odd}<k\text{ odd}}(h_{j'k i}h_{k'ji}-h_{jki}h_{k'j'i}+h_{j'k i'}h_{k'j i'}-h_{jk i'}h_{k'j'i'})\\
&+4\sum_{i\text{ odd}< j \text{ odd}< k\text{ odd}}(h_{i'kj}h_{k'ij}-h_{ikj}h_{k'i'j}+h_{i'kj '}h_{k'ij'}-h_{ikj'}h_{k'i'j'})\\
&+4\sum_{i\text{ odd}<j\text{ odd}< k\text{ odd}}(h_{i'jk}h_{j'ik}-h_{ijk}h_{j'i'k}+h_{i'jk'}h_{j'ik'}-h_{ijk'}h_{j'i'k'}).\\\end{split}\]
\end{lem}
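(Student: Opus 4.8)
The plan is to verify the three identities by expanding the compact expressions for $A$, $B$, $C$ in \eqref{quadform2} and regrouping the terms according to the pair structure of the indices. Group $\{1,\dots,2n\}$ into the $n$ pairs $\{i,i'\}$ with $i$ odd, $i'=i+1$. Since $h_{ijk}$ is fully symmetric, $h_{ijk}$ (and hence $h_{ijk}^2$) depends only on the multiset $\{i,j,k\}$, and every index triple either lies in a single pair, spans exactly two pairs, or spans three distinct pairs. I will treat these three cases in turn for each of $A$, $B$, $C$.

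For $A=\sum_{i,j,k}h_{ijk}^2$ the only issue is counting multiplicities: an ordered triple producing a multiset with three distinct entries occurs $3!=6$ times, one with exactly two equal entries occurs $3$ times, and the diagonal occurs once. Listing representatives — the diagonal $h_{iii}^2$ (multiplicity $1$); the one-pair mixed triples $h_{i'ii}^2,\,h_{ii'i'}^2$ (multiplicity $3$); for two odd pairs $i<j$, the triples with a repeated index, e.g.\ $h_{jii}^2$ (multiplicity $3$), and the triples using both members of one pair and one member of the other, e.g.\ $h_{ii'j}^2,\ h_{ijj'}^2$ (multiplicity $6$); and for three odd pairs $i<j<k$ the $2^3$ sign-choice triples $h_{ijk}^2,\dots,h_{i'j'k'}^2$ (multiplicity $6$) — and attaching these multiplicities reproduces the stated formula for $A$.

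For $B=-2\sum_k\sum_{i\ \mathrm{odd}}(h_{iik}h_{i'i'k}-h_{ii'k}^2)$, fix odd $i$ and split the $k$-sum by whether $k$ lies in the pair $\{i,i'\}$ or in a different odd pair $\{j,j'\}$. The two ``same-pair'' terms $k=i$ and $k=i'$, summed over odd $i$, give the first two lines of the formula for $B$. For the remaining terms the data is an unordered pair of odd indices together with a choice of which one is the outer summation index; recording this as $i<j$ with either member distinguished yields the last two lines, one obtained from the other by interchanging $i$ and $j$. Likewise for $C=4\sum_k\sum_{i<j\ \mathrm{odd}}(h_{i'jk}h_{j'ik}-h_{ijk}h_{j'i'k})$, fix odd $i<j$ and split the $k$-sum into $k\in\{i,i'\}$, $k\in\{j,j'\}$, and $k\in\{l,l'\}$ for a third odd $l\neq i,j$; the first two cases give the first two lines of the $C$-formula, and the third case splits into the orderings $l<i<j$, $i<l<j$, $i<j<l$, each of which — after renaming the three odd indices as $i<j<k$ and using that priming commutes with renaming — produces one of the last three lines.

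The argument is entirely mechanical; the only thing requiring care is the bookkeeping — the multiplicities in $A$, and in $B$ and $C$ the tracking of which index is distinguished together with the sign and prime pattern under the renamings. The most error-prone point is the three-pair term of $C$, where one must correctly match each of the three orderings of $\{i,j,l\}$ to one of the last three displayed lines; once this is done the identities hold verbatim.
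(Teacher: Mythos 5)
Your proposal is correct and follows essentially the same route as the paper: derive $A$ by the standard multiplicity count $1,3,6$ and then split each sum by whether the index triple lies in one, two, or three odd pairs; and for $B$ and $C$ split the inner $k$-sum by which pair $k$ belongs to, reindexing the cross-pair contributions so that the surviving free odd indices are increasing. The only bookkeeping point you flag — matching the three orderings of the third odd pair in $C$ to the last three displayed lines after renaming — is exactly the step the paper makes explicit, so the proposal is a faithful reconstruction of the paper's argument.
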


\begin{lem} $\tilde{Q}=\tilde{Q}_1+\tilde{Q}_2+\tilde{Q}_3$ where
$\tilde{Q}_1$ is the sum over all odd indexes $i$ of \[ h_{iii}^2+h_{i'i'i'}^2+ 5 (h_{ii'i'}^2+ h_{i'ii}^2) -2h_{iii}h_{i'i'i}-2h_{iii'}h_{i'i'i'},\]

$\tilde{Q}_2$ is the sum over all indexes $(i, j)$ with, ${i\text{ odd} < j \text{ odd}}$,  of
\[\begin{split}
&3 (h_{ijj}^2+h_{ij'j'}^2+h_{i'jj}^2+h_{i'j'j'}^2+h_{jii}^2+h_{j'ii}^2+h_{ji'i'}^2+h_{j'i'i'}^2)\\
&+8 (h_{ii'j}^2+ h_{ii'j'}^2+h_{ijj'}^2+h_{i'jj'}^2)-2(h_{iij}h_{i'i'j}+h_{iij'}h_{i'i'j'})-2(h_{jji}h_{j'j'i}+h_{jji'}h_{j'j'i'})\\
&+4(h_{i'ji}h_{j'ii}-h_{iji}h_{j'i'i})+4(h_{i'ji'}h_{j'ii'}-h_{iji'}h_{j'i'i'})\\
&+4(h_{i'jj}h_{j'ij}-h_{ijj}h_{j'i'j})+4(h_{i'jj'}h_{j'ij'}-h_{ijj'}h_{j'i'j'}),\\
\end{split}\]
and $\tilde{Q}_3$ is the sum over all indexes $(i, j, k)$ with, ${i \text{ odd} < j \text{ odd} <k \text{ odd}}$,  of
\[\begin{split}
&6( h_{ijk}^2+ h_{ijk'}^2+h_{ij'k}^2+ h_{ij'k'}^2+h_{i'jk}^2+ h_{i'jk'}^2+h_{i'j'k}^2+ h_{i'j'k'}^2)\\
&+4(h_{j'k i}h_{k'ji}-h_{jki}h_{k'j'i}+h_{j'k i'}h_{k'j i'}-h_{jk i'}h_{k'j'i'})\\
&+4(h_{i'kj}h_{k'ij}-h_{ikj}h_{k'i'j}+h_{i'kj '}h_{k'ij'}-h_{ikj'}h_{k'i'j'})\\
&+4(h_{i'jk}h_{j'ik}-h_{ijk}h_{j'i'k}+h_{i'jk'}h_{j'ik'}-h_{ijk'}h_{j'i'k'}).\\\end{split}\]

In addition, the following inequalities hold:
\[\tilde{Q}_1\geq \sum_{i \text{ odd}}(3-\sqrt{5}) (h_{iii}^2+h_{i'i'i'}^2+ h_{ii'i'}^2+ h_{i'ii}^2).\]
\[\tilde{Q}_2\geq 2\sum_{i\text{ odd} < j \text{ odd}} (h_{ijj}^2+h_{ij'j'}^2+h_{i'jj}^2+h_{i'j'j'}^2+h_{jii}^2+h_{j'ii}^2+h_{ji'i'}^2+h_{j'i'i'}^2+h_{ii'j}^2+ h_{ii'j'}^2+h_{ijj'}^2+h_{i'jj'}^2).\]
\[\tilde{Q}_3\geq 4\sum_{i \text{ odd} < j \text{ odd} <k \text{ odd}}( h_{ijk}^2+ h_{ijk'}^2+h_{ij'k}^2+ h_{ij'k'}^2+h_{i'jk}^2+ h_{i'jk'}^2+h_{i'j'k}^2+ h_{i'j'k'}^2).\]
Thus,  \[\tilde{Q}(h_{ijk})\geq (3-\sqrt{5}) ||h_{ijk}||^2\] where \[||h_{ijk}||^2=\sum_{i} h_{iii}^2+\sum_{i\not= j} h_{ijj}^2+\sum_{i<j<k} h_{ijk}^2.\]
\end{lem}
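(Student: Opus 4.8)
The plan is to take as given the expanded forms of $A$, $B$, and $C$ furnished by the preceding Lemma, and to proceed in three stages: first verify the regrouping $\tilde Q=\tilde Q_1+\tilde Q_2+\tilde Q_3$; then observe that each $\tilde Q_m$ is an orthogonal direct sum of copies of one fixed small quadratic form; and finally bound the least eigenvalue of each of those forms. For the regrouping, note that every monomial occurring in $A+B+C$ is either a pure square $h_{abc}^2$ or a product $\pm h_{abc}h_{a'b'c'}$, and that a triple of indices can meet at most three of the complex-line pairs $\{1,2\},\{3,4\},\dots,\{2n-1,2n\}$. Sorting the monomials of $A+B+C$ by whether they involve exactly one, two, or three such pairs defines $\tilde Q_1,\tilde Q_2,\tilde Q_3$; matching the numerical coefficients (the $1,3,6$ coming from the diagonal of $A$, the $+2$'s from the diagonal of $B$, the remaining cross terms from the off-diagonal parts of $B$ and $C$), and using the full symmetry of $h_{abc}$ to merge entries such as $h_{i'i'i}$ with $h_{ii'i'}$, reproduces the stated expressions for $\tilde Q_1,\tilde Q_2,\tilde Q_3$. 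This is routine but lengthy bookkeeping, and is the only place the first Lemma enters.

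For the second stage, fix an odd index $i$ (respectively an ordered pair $i<j$, respectively a triple $i<j<k$ of odd indices). The summand of $\tilde Q_1$ (resp.\ $\tilde Q_2$, $\tilde Q_3$) attached to it depends only on the $4$ (resp.\ $12$, $8$) listed components of $h$, and for distinct index data these collections of components are pairwise disjoint. Hence each $\tilde Q_m$ is an orthogonal sum of copies of a single quadratic form $q_m$ in $4$, $12$, or $8$ variables, and it suffices to estimate $q_m$ from below.

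For $q_1$, the symmetries $h_{i'i'i}=h_{ii'i'}$ and $h_{iii'}=h_{i'ii}$ turn the form into
\[
\bigl(h_{iii}^2-2h_{iii}h_{ii'i'}+5h_{ii'i'}^2\bigr)+\bigl(h_{i'i'i'}^2-2h_{i'i'i'}h_{i'ii}+5h_{i'ii}^2\bigr),
\]
that is, two copies of the binary form with diagonal entries $1,5$ and off-diagonal entry $-1$, whose eigenvalues are $3\pm\sqrt5$; this yields the first inequality with constant $3-\sqrt5$. For $q_2$ and $q_3$ I would exploit the residual symmetry generated by $i\leftrightarrow i'$, $j\leftrightarrow j'$ (and, for $q_2$, $i\leftrightarrow j$) to block-diagonalize each form into a handful of small blocks (of size at most three) and compute their spectra directly; I expect crude Cauchy--Schwarz bounds on the cross terms to give only nonnegativity, while the sharp margins $2$ and $4$ come out of the actual block eigenvalues. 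This gives $q_2\ge 2\sum(\cdots)$ and $q_3\ge 4\sum(\cdots)$, hence the remaining two inequalities.

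To finish, one checks that the pure squares occurring in $\tilde Q_1,\tilde Q_2,\tilde Q_3$, each with multiplicity one, enumerate exactly the monomials of $\|h_{ijk}\|^2=\sum_a h_{aaa}^2+\sum_{a\neq b}h_{abb}^2+\sum_{a<b<c}h_{abc}^2$: those whose indices all lie in one line-pair fall into $\tilde Q_1$, those spanning two into $\tilde Q_2$, those spanning three into $\tilde Q_3$. Since $3-\sqrt5<2<4$, adding the three inequalities gives $\tilde Q\ge(3-\sqrt5)\|h_{ijk}\|^2$, which is Lemma~\ref{evalue}. I expect the main obstacle to be precisely the eigenvalue estimates for $q_2$ and $q_3$: the coefficients $3$, $8$, $6$ and $\pm2,\pm4$ are tuned so that the forms are positive with exactly the required margins, so the real work is in finding the correct block decomposition (equivalently, decomposing the relevant representation of the symmetry group into irreducibles); the coefficient-matching of the first stage, though tedious, is mechanical.
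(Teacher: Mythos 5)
Your approach is essentially the paper's: regroup $\tilde Q$ into $\tilde Q_1,\tilde Q_2,\tilde Q_3$ by the number of line pairs $\{i,i'\}$ a triple touches, observe each is an orthogonal direct sum of copies of a fixed small form, and estimate the least eigenvalue of each block; you execute $\tilde Q_1$ exactly as the paper intends (two copies of $\left(\begin{smallmatrix}1&-1\\-1&5\end{smallmatrix}\right)$, eigenvalues $3\pm\sqrt5$), and your final enumeration of pure squares against $\|h_{ijk}\|^2$ is correct. The paper is equally terse on the remaining two blocks, recording only the outcome: $\tilde Q_2$ splits into \emph{four} identical $3$-variable forms with least eigenvalue $2$, and $\tilde Q_3$ into \emph{two} identical $4$-variable forms with least eigenvalue $4$ (so your guess of "blocks of size at most three" misses for $\tilde Q_3$, whose blocks have size four). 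That unexecuted block-diagonalization is where the real content lies, but it is also exactly what the paper leaves unstated, so your proposal matches the paper's proof both in strategy and in where it elides detail.
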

\begin{proof}
For each odd $i$, the expression in $\tilde{Q}_1$ can be further divided into two identical quadratic forms of two variables, each has smallest eigenvalue $3-\sqrt{5}$.
For each index $(i, j)$ with ${i\text{ odd} < j \text{ odd}}$, the expression in $\tilde{Q}_2$ can be further divided into four identical quadratic forms of three variables, each has smallest eigenvalue $2$. For each index $(i, j, k)$ with ${i \text{ odd} < j \text{ odd} <k \text{ odd}}$, the expression in $\tilde{Q}_3$ can be further divided into two identical quadratic forms of four variables, each has smallest eigenvalue $4$.
\end{proof}

First of all,
\begin{equation}\label{eq_A}A=\sum_{i} h_{iii}^2+3\sum_{i<j} h_{ijj}^2+3\sum_{i<j} h_{jii}^2+6\sum_{i<j<k} h_{ijk}^2.\end{equation}

Write \[\sum_{i<j} h_{ijj}^2=\sum_{i \text{ odd }<j} h_{ijj}^2+\sum_{i \text{ even }<j \text{ odd }} h_{ijj}^2+\sum_{i \text{ even }<j \text{ even}} h_{ijj}^2.\] In the first summand, it is possible that $j$ equals $i'$, thus
\begin{equation}\label{i<j}\sum_{i<j} h_{ijj}^2=\sum_{i \text{ odd}} h_{ii'i'}^2+\sum_{i \text{ odd }<j \text{ odd}} (h_{ijj}^2+ h_{ij'j'}^2)+\sum_{i \text{ odd }<j \text{ odd }} h_{i'jj}^2+\sum_{i \text{ odd }<j \text{ odd}} h_{i'j'j'}^2.\end{equation}
Similarly,
\begin{equation}\label{j<i}\sum_{i<j} h_{jii}^2=\sum_{i \text{ odd}} h_{i'ii}^2+\sum_{i \text{ odd }<j \text{ odd}} (h_{jii}^2+ h_{j'ii}^2)+\sum_{i \text{ odd }<j \text{ odd }} h_{ji'i'}^2+\sum_{i \text{ odd }<j \text{ odd}} h_{j'i'i'}^2.\end{equation}

On the other hand,
\begin{equation}\label{i<j<k}\begin{split}&\sum_{i<j<k} h_{ijk}^2\\
&=\sum_{i \text{ odd}<j<k} h_{ijk}^2+\sum_{i \text{ even} <j<k} h_{ijk}^2\\
&=\sum_{i \text{ odd} <k, i'<k} h_{ii'k}^2+\sum_{i \text{ odd}<j<k, j\not= i'} h_{ijk}^2+\sum_{i \text{ even} <j \text{ odd} <k} h_{ijk}^2+\sum_{i \text{ even} <j \text{ even} <k} h_{ijk}^2.\\\end{split}\end{equation}

The first term on the right hand side of (\ref{i<j<k}) equals
\[ \sum_{i \text{ odd} < j \text{ odd}} (h_{ii'j}^2+h_{ii'j'}^2). \]
The second term on the right hand side of (\ref{i<j<k}) equals \[\sum_{i \text{ odd}<j<k, j\not= i'} h_{ijk}^2=\sum_{i \text{ odd}<j \text{ odd} <k} h_{ijk}^2+\sum_{i \text{ odd}<j \text{ even} <k, j\not= i'} h_{ijk}^2.\] It is possible for $k$ to equal to $j'$ in the first summand, thus, the second term is
\[\sum_{i \text{ odd}<j \text{ odd}} h_{ijj'}^2+\sum_{i \text{ odd}<j \text{ odd} <k \text{ odd}} (h_{ijk}^2+ h_{ijk'}^2+h_{ij'k}^2+ h_{ij'k'}^2).\]
The third term on the right hand side of (\ref{i<j<k}) equals
\[\sum_{i \text{ even} <j \text{ odd} <k} h_{ijk}^2=\sum_{i \text{ odd} <j \text{ odd} } h_{i'jj'}^2+\sum_{i \text{ odd} <j \text{ odd} <k \text{ odd}} (h_{i'jk}^2+h_{i'jk'}^2).\]
The fourth term on the right hand side of (\ref{i<j<k}) equals \[\sum_{i \text{ even} <j \text{ even} <k} h_{ijk}^2=\sum_{i \text{ odd} <j \text{ odd} <k \text{ odd}} (h_{i'j'k}^2+h_{i'j'k'}^2).
\]
Therefore,
\begin{equation}\label{i<j<k_2}\begin{split}&\sum_{i<j<k} h_{ijk}^2\\
&=\sum_{i \text{ odd}<j \text{ odd}} (h_{ii'j}^2+ h_{ii'j'}^2+h_{ijj'}^2+h_{i'jj'}^2)\\
&+\sum_{i \text{ odd} < j \text{ odd} <k \text{ odd}}( h_{ijk}^2+ h_{ijk'}^2+h_{ij'k}^2+ h_{ij'k'}^2+h_{i'jk}^2+ h_{i'jk'}^2+h_{i'j'k}^2+ h_{i'j'k'}^2).\\\end{split}\end{equation}

Putting (\ref{i<j}), (\ref{j<i}), and (\ref{i<j<k_2}) into (\ref{eq_A}), we obtain the expression for $A$.

We proceed to compute $B$ and $C$ in the same manner.
\[\begin{split}
B&=-2\sum_{ i\text{ odd}}(h_{iii}h_{i'i'i}-h_{ii'i}^2)-2\sum_{ i\text{ odd}}(h_{iii'}h_{i'i'i'}-h_{ii'i'}^2)\\
&-2\sum_{ i\text{ odd}, j \text{ odd}, i\not= j }(h_{iij}h_{i'i'j}-h_{ii'j}^2+h_{iij'}h_{i'i'j'}-h_{ii'j'}^2)\\
&=-2\sum_{ i\text{ odd}}h_{iii}h_{i'i'i}+2\sum_{ i\text{ odd}} h_{ii'i}^2-2\sum_{ i\text{ odd}}h_{iii'}h_{i'i'i'}+2\sum_{ i\text{ odd}} h_{ii'i'}^2\\
&-2\sum_{ i\text{ odd}< j \text{ odd} }(h_{iij}h_{i'i'j}-h_{ii'j}^2+h_{iij'}h_{i'i'j'}-h_{ii'j'}^2)\\
&-2\sum_{ i\text{ odd}< j \text{ odd}}(h_{jji}h_{j'j'i}-h_{jj'i}^2+h_{jji'}h_{j'j'i'}-h_{jj'i'}^2).
\end{split}\]

\[\begin{split}
C&=4\sum_{i\text{ odd}<j\text{ odd}}(h_{i'ji}h_{j'ii}-h_{iji}h_{j'i'i})+4\sum_{i\text{ odd}<j\text{ odd}}(h_{i'ji'}h_{j'ii'}-h_{iji'}h_{j'i'i'})\\
&+4\sum_{i\text{ odd}<j\text{ odd}}(h_{i'jj}h_{j'ij}-h_{ijj}h_{j'i'j})+4\sum_{i\text{ odd}<j\text{ odd}}(h_{i'jj'}h_{j'ij'}-h_{ijj'}h_{j'i'j'})\\
&+4\sum_{i\text{ odd}<j\text{ odd}}\left[\sum_{k \text{ odd}, k\not= i, j}(h_{i'jk}h_{j'ik}-h_{ijk}h_{j'i'k}+h_{i'jk'}h_{j'ik'}-h_{ijk'}h_{j'i'k'})\right],\end{split}\] while
\[\begin{split}&\sum_{i\text{ odd}<j\text{ odd}}\left[\sum_{k \text{ odd}, k\not= i, j}(h_{i'jk}h_{j'ik}-h_{ijk}h_{j'i'k}+h_{i'jk'}h_{j'ik'}-h_{ijk'}h_{j'i'k'})\right]\\
=&\sum_{k \text{ odd}< i\text{ odd}<j\text{ odd}}(h_{i'jk}h_{j'ik}-h_{ijk}h_{j'i'k}+h_{i'jk'}h_{j'ik'}-h_{ijk'}h_{j'i'k'})\\
&\sum_{i\text{ odd}< k \text{ odd}< j\text{ odd}}(h_{i'jk}h_{j'ik}-h_{ijk}h_{j'i'k}+h_{i'jk'}h_{j'ik'}-h_{ijk'}h_{j'i'k'})\\
&\sum_{i\text{ odd}<j\text{ odd}< k\text{ odd}}(h_{i'jk}h_{j'ik}-h_{ijk}h_{j'i'k}+h_{i'jk'}h_{j'ik'}-h_{ijk'}h_{j'i'k'})\\
=&\sum_{i \text{ odd}< j\text{ odd}<k\text{ odd}}(h_{j'k i}h_{k'ji}-h_{jki}h_{k'j'i}+h_{j'k i'}h_{k'j i'}-h_{jk i'}h_{k'j'i'})\\
&\sum_{i\text{ odd}< j \text{ odd}< k\text{ odd}}(h_{i'kj}h_{k'ij}-h_{ikj}h_{k'i'j}+h_{i'kj '}h_{k'ij'}-h_{ikj'}h_{k'i'j'})\\
&\sum_{i\text{ odd}<j\text{ odd}< k\text{ odd}}(h_{i'jk}h_{j'ik}-h_{ijk}h_{j'i'k}+h_{i'jk'}h_{j'ik'}-h_{ijk'}h_{j'i'k'}).\\ \end{split}\]

\bibliographystyle{plain}

\end{document}